\documentclass[10pt,letterpaper]{article}
\usepackage[utf8]{inputenc}
\usepackage{amsmath}
\usepackage{amsfonts}
\usepackage{amssymb}
\usepackage{graphicx}
\usepackage{mathrsfs}
\usepackage{upref,amsthm,amsxtra,exscale}
\usepackage{cite}
\usepackage[colorlinks=true,urlcolor=blue,
citecolor=red,linkcolor=blue,linktocpage,pdfpagelabels,
bookmarksnumbered,bookmarksopen]{hyperref}

\newtheorem{theorem}{Theorem}[section]
\newtheorem{corollary}[theorem]{Corollary}

\newtheorem{lemma}[theorem]{Lemma}
\newtheorem{proposition}[theorem]{Proposition}

\numberwithin{equation}{section}

\author{Mónica Clapp\footnote{M. Clapp was partially supported by CONACYT grant 237661 (Mexico) and UNAM-DGAPA-PAPIIT grant IN100718 (Mexico).} \quad and \quad Jorge Faya\footnote{J. Faya was supported by a postdoctoral fellowship under CONACYT grant 237661 (Mexico).}}
\title{Multiple solutions to a weakly coupled purely critical elliptic system in bounded domains}
\date{\today}

\begin{document}
\maketitle

\begin{abstract}
We study the weakly coupled critical elliptic system
\begin{equation*}
\begin{cases}
-\Delta u=\mu_{1}|u|^{2^{*}-2}u+\lambda\alpha |u|^{\alpha-2}|v|^{\beta}u & \text{in }\Omega,\\
-\Delta v=\mu_{2}|v|^{2^{*}-2}v+\lambda\beta |u|^{\alpha}|v|^{\beta-2}v & \text{in }\Omega,\\
u=v=0 & \text{on }\partial\Omega,
\end{cases}
\end{equation*}
where $\Omega$ is a bounded smooth domain in $\mathbb{R}^{N}$, $N\geq 3$, $2^{*}:=\frac{2N}{N-2}$ is the critical Sobolev exponent, $\mu_{1},\mu_{2}>0$, $\alpha, \beta>1$, $\alpha+\beta =2^{*}$ and $\lambda\in\mathbb{R}$.

We establish the existence of a prescribed number of fully nontrivial solutions to this system under suitable symmetry assumptions on $\Omega$, which allow domains with finite symmetries, and we show that the positive least energy symmetric solution exhibits phase separation as $\lambda\to -\infty$.

We also obtain existence of infinitely many solutions to this system in $\Omega=\mathbb{R}^N$.    \vspace{4pt}

\noindent\textbf{Keywords:} Weakly coupled elliptic system; bounded domain; critical nonlinearity; phase separation; entire solutions.

\noindent\textbf{Mathematics Subject Classification:} 35J47, 35B33, 35B40, 35J50.
\end{abstract}

\section{Introduction}

We consider the weakly coupled critical elliptic system
\begin{equation} \label{eq:system}
\begin{cases}
-\Delta u=\mu_{1}|u|^{2^{*}-2}u+\lambda\alpha |u|^{\alpha-2}|v|^{\beta}u,\\
-\Delta v=\mu_{2}|v|^{2^{*}-2}v+\lambda\beta |u|^{\alpha}|v|^{\beta-2}v,\\
u,v\in D^{1,2}_0(\Omega),
\end{cases}
\end{equation}
where $\Omega$ is either a bounded smooth domain in $\mathbb{R}^{N}$ or $\Omega=\mathbb{R}^{N}$, $N\geq 3$, $2^{*}:=\frac{2N}{N-2}$ is the critical Sobolev exponent, $\mu_{1},\mu_{2}>0$, $\lambda\in\mathbb{R}$, $\alpha, \beta>1$ and $\alpha+\beta =2^{*}$. 

This type of systems arises, e.g., in the Hartree-Fock theory for double condensates, that is, Bose-Einstein condensates of two different hyperfine states which overlap in space; see \cite{egbb}. The sign of $\mu_{i}$ reflects the interaction of the particles within each single state. This interaction is attractive if $\mu_{i}>0$. The sign of $\lambda$ reflects the interaction of particles in different states. It is attractive if $\lambda>0$ and it is repulsive if $\lambda<0$. If the condensates repel, they separate spatially. This phenomenon is called phase separation and has been described in \cite{t}. The system \eqref{eq:system} is called \emph{cooperative} if $\lambda>0$ and it is called \emph{competitive} if $\lambda<0$.

Weakly coupled elliptic systems have attracted considerable attention in recent years, and there is an extensive literature on subcritical systems, specially on the cubic system (where $\alpha=\beta=2$ and $2^*$ is replaced by $4$) in dimensions $N\leq 3$; we refer to \cite{so} for a detailed account of the achievements in the subcritical case. In contrast, there are still few results for critical systems.

When $\lambda=0$ the system \eqref{eq:system} reduces to the problem
\begin{equation} \label{eq:bc}
-\Delta w=|w|^{{2}^{*}-2}w,\text{\qquad}w\in D^{1,2}_0(\Omega),
\end{equation}
which has been extensively studied in the last decades. W.Y. Ding showed in \cite{d} that it has infinitely many solutions if $\Omega=\mathbb{R}^{N}$. It is also well known that it does not have a nontrivial solution if $\Omega$ is strictly starshaped and $\Omega\neq\mathbb{R}^{N}$. A remarkable result by Bahri and Coron \cite{bc} establishes the existence of a positive solution to \eqref{eq:bc} in every bounded smooth domain with nontrivial $\mathbb{Z}_2$-homology.

The existence of multiple solutions to \eqref{eq:bc} in a bounded domain is, to a great extent, an open problem. It is shown in \cite{gmp} that, in a bounded punctured domain, the number of sign-changing solutions to the problem \eqref{eq:bc} becomes arbitrarily large, as the hole schrinks. Multiplicity in bounded symmetric domains was studied in \cite{cf0}, where it is shown that the number of sign-changing solutions to \eqref{eq:bc} increases, as the cardinality of the orbits increases.

Note that, if $w$ solves \eqref{eq:bc}, then $(\mu_{1}^{(2-N)/4}w,0)$ and $(0,\mu_{2}^{(2-N)/4}w)$ solve the system \eqref{eq:system} for every $\lambda$. Solutions of this type are called \emph{semitrivial}. We are interested in \emph{fully nontrivial} solutions to \eqref{eq:system}, i.e., solutions where both components, $u$ and $v$ are nontrivial. A solution is said to be \emph{synchronized} if it is of the form $(sw,tw)$ with $s,t\in\mathbb{R}$, and it is called \emph{positive} if $u\geq0$ and $v\geq0$. 

Our aim is to present some results regarding the existence and multiplicity of fully nontrivial solutions to the critical system \eqref{eq:system}.

In the cooperative case, we make the following additional assumption in dimensions $\leq5$:

\begin{itemize}
\item[$(A)$] If $\lambda>0$ and, either $N=3$ or $4$, or $N=5$ and $\alpha\not\in (\frac{4}{3},2)$, then there exists $r\in(0,\infty)$ such that
$$\mu_1 r^{2^*-2} + \lambda\alpha r^{\alpha -2}-\lambda\beta r^\alpha -\mu_2=0.$$
\end{itemize}
With this assumption, every nontrivial solution to the problem \eqref{eq:bc} gives rise to a fully nontrivial synchronized solution of the system \eqref{eq:system} with $\lambda>0$; see Section \ref{sec:cooperative}. In particular, we obtain the following version of the Bahri-Coron theorem.

\begin{theorem} \label{thm:bc}
Let $\Omega$ be a bounded smooth domain in $\mathbb{R}^N$, $\lambda>0$, and assume $(A)$. If $\widetilde{H}_*(\Omega;\mathbb{Z}_2)\neq 0$, then the system \eqref{eq:system} has a positive fully nontrivial synchronized solution.
\end{theorem}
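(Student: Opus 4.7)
The plan is to reduce the system \eqref{eq:system} to the scalar Bahri–Coron problem \eqref{eq:bc} via a synchronized ansatz. First, I would seek solutions of the form $(u,v)=(sw,tw)$ with positive constants $s,t$ and $w>0$ a solution of $-\Delta w=w^{2^*-1}$ in $\Omega$. Using $\alpha+\beta=2^*$, a direct substitution shows that the right-hand side of each equation in \eqref{eq:system} factors as a positive multiple of $w^{2^*-1}$. Dividing out by $w^{2^*-1}$ and using $-\Delta w=w^{2^*-1}$, one finds that $(sw,tw)$ solves \eqref{eq:system} if and only if $(s,t)$ satisfies the algebraic system
$$\mu_1 s^{2^*-2}+\lambda\alpha\, s^{\alpha-2}t^{\beta}=1,\qquad \mu_2 t^{2^*-2}+\lambda\beta\, t^{\beta-2}s^{\alpha}=1.$$

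Second, I would produce positive $s,t$ solving these algebraic equations. Setting $r:=s/t$, subtracting the two equations and dividing by $t^{2^*-2}$ (the powers of $t$ cancel precisely because $\alpha+\beta-2=2^*-2$) yields
$$\mu_1 r^{2^*-2}+\lambda\alpha\, r^{\alpha-2}-\lambda\beta\, r^{\alpha}-\mu_2=0,$$
which is the equation of hypothesis $(A)$. Thus $(A)$ supplies a positive root $r$; in the dimensions excluded from $(A)$, an elementary sign analysis of this polynomial gives such a root for free, which is why those cases need no extra hypothesis. Given $r>0$, inserting $s=rt$ into the first algebraic equation reduces it to $(\mu_1 r^{2^*-2}+\lambda\alpha\, r^{\alpha-2})\,t^{2^*-2}=1$, whose right-hand factor is strictly positive; this uniquely determines $t>0$, and then $s:=rt>0$. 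A direct check shows the second equation is automatically satisfied, precisely because $r$ solves the equation in $(A)$.

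Third, the hypothesis $\widetilde{H}_*(\Omega;\mathbb{Z}_2)\neq 0$ is exactly the topological condition under which the Bahri–Coron theorem \cite{bc} produces a positive solution $w\in D^{1,2}_0(\Omega)$ to \eqref{eq:bc}. With $(s,t)$ as constructed above, the pair $(sw,tw)$ is then a positive fully nontrivial synchronized solution of \eqref{eq:system}.

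In this sense, the proof is essentially obstacle-free: the Bahri–Coron existence result is used as a black box, and hypothesis $(A)$ has been crafted precisely so that the scalar polynomial equation coming from the ansatz has a positive root. The only genuine work is the routine algebraic reduction from the PDE system to this polynomial, which is the content hinted at just before the theorem.
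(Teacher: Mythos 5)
Your argument is correct and follows essentially the same route as the paper: the synchronized ansatz reduces \eqref{eq:system} to the algebraic system \eqref{eq:num_sys}, whose solvability for $\lambda>0$ is exactly what Lemma \ref{lem:synchronized} establishes via the root of $h$ (supplied by $(A)$ or, in the remaining cases, by the sign change of $h$), and the positive solution $w$ comes from Bahri--Coron used as a black box. Nothing is missing; note only that for $\lambda>0$ the positivity of $\mu_2+\lambda\beta r^{\alpha}$ (equivalently of your factor $\mu_1 r^{2^*-2}+\lambda\alpha r^{\alpha-2}$) is automatic, which is the point your construction of $t$ implicitly uses.
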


Here $\widetilde{H}_*(\,\cdot\,;\mathbb{Z}_2)$ stands for reduced singular homology with $\mathbb{Z}_2$-coefficients. The special case of punctured domains was treated in \cite{ppw}.

In the competitive case the situation is quite different. In fact, there exists $\lambda_*<0$ such that the system \eqref{eq:system} does not have a synchronized solution if $\lambda<\lambda_*$; see \cite[Proposition 2.3]{cp}. 

It is an open question whether Theorem \ref{thm:bc} is true or not for $\lambda<0$. Answering this question is not easy. The only results that we know of for the system \eqref{eq:system} with $\lambda<0$ in a bounded domain are those recently obtained by Pistoia and Soave in \cite{ps}, where they established existence of multiple fully nontrivial solutions in bounded punctured domains of dimension $3$ or $4$ using the Lyapunov-Schmidt reduction method. This method cannot be applied in higher dimensions, due to the low regularity of the interaction term. 

In this paper we shall consider symmetric domains, and we will obtain results in every dimension.

Let $O(N)$ denote, as usual, the group of linear isometries of $\mathbb{R}^{N}.$ If $G$ is a closed subgroup of $O(N)$, we write $Gx:=\{gx:g\in G\}$ for the $G$-orbit of $x\in\mathbb{R}^{N}$ and $\#Gx$ for its cardinality. A subset $X$ of $\mathbb{R}^{N}$ is said to be $G$-invariant if $Gx\subset X$ for every $x\in X$, and a function $u:X\to\mathbb{R}$ is called $G$-invariant if it is constant on every $G$-orbit of $X$.

Fix a closed subgroup $\Gamma$ of $O(N)$ and a nonempty $\Gamma$-invariant bounded smooth domain $\Theta$ in $\mathbb{R}^N$ such that the $\Gamma$-orbit of every point $x\in\Theta$ has positive dimension. Then, we prove the following result.

\begin{theorem} \label{thm:bounded}
Fix $\Gamma$ and $\Theta$ as above, and assume $(A)$. Then, for any given $n\in\mathbb{N}$, there exists $\ell_n >0$, depending on $\Gamma$ and $\Theta$, such that, if $\Omega$ is a bounded smooth domain in $\mathbb{R}^N$ which contains $\Theta$, $\Omega$ is $G$-invariant for some closed subgroup $G$ of $\Gamma$, and
$$\min_{x\in\bar{\Omega}} \#Gx > \ell_n,$$
then, for each $\lambda \neq0$, the system \eqref{eq:system} has at least $n$ nonequivalent fully nontrivial $G$-invariant solutions $(u_1,v_1),\ldots,(u_n,v_n)$ which satisfy
$$\int_{\Omega}(|\nabla u_j|^2+|\nabla v_j|^2) \leq \left(1+\min_{x\in\bar{\Omega}} \#Gx\right)\mu_0^{\frac{2-N}{2}}S^{\frac{N}{2}}\quad\text{for each }\;j=1,\ldots,n,$$
where $\mu_0:=\max\{\mu_1,\mu_2\}$ and $S$ is the best Sobolev constant for the embedding $D^{1,2}(\mathbb{R}^N)\hookrightarrow L^{2^*}(\mathbb{R}^N)$. Moreover, $(u_1,v_1)$ is positive and has least energy among all $G$-invariant solutions.
\end{theorem}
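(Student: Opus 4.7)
The plan is to realize the $n$ solutions as critical points of the Euler--Lagrange functional associated with \eqref{eq:system}, restricted to a Nehari-type constraint made of fully nontrivial $G$-invariant pairs, and to exploit the symmetry $G$ to push the compactness threshold high enough to run a symmetric min-max argument. Set $\mathcal{D}_G := D^{1,2}_0(\Omega)^G \times D^{1,2}_0(\Omega)^G$ and
\[
J_\lambda(u,v) := \tfrac{1}{2}\int_{\Omega}(|\nabla u|^2+|\nabla v|^2) - \tfrac{1}{2^*}\int_{\Omega}(\mu_1|u|^{2^*}+\mu_2|v|^{2^*}) - \lambda\int_{\Omega}|u|^\alpha|v|^\beta.
\]
By the principle of symmetric criticality, critical points of $J_\lambda$ on $\mathcal{D}_G$ are $G$-invariant weak solutions of \eqref{eq:system}. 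Let $\mathcal{N}_\lambda^G$ be the fully nontrivial Nehari set of $(u,v)\in\mathcal{D}_G$ with $u,v\not\equiv 0$ satisfying the two identities $\partial_u J_\lambda(u,v)[u]=\partial_v J_\lambda(u,v)[v]=0$. Adding the two identities and using $\alpha+\beta=2^*$ gives $J_\lambda=\tfrac{1}{N}\int_\Omega(|\nabla u|^2+|\nabla v|^2)$ on $\mathcal{N}_\lambda^G$, so the stated gradient bound is equivalent to the energy bound $J_\lambda\leq c_G^\sharp:=\tfrac{1}{N}(1+\min_{\bar\Omega}\#Gx)\mu_0^{(2-N)/2}S^{N/2}$.

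The heart of the argument is a compactness statement: $J_\lambda|_{\mathcal{N}_\lambda^G}$ satisfies the Palais--Smale condition below $c_G^\sharp$. I would derive this from a $G$-equivariant profile decomposition in the spirit of Struwe, adapted to the coupled critical system: any non-convergent $(PS)_c$ sequence splits as a weak limit plus finitely many bubbles, where each bubble is a nontrivial entire solution of the limit system on $\mathbb{R}^N$ with energy at least $\tfrac{1}{N}\mu_0^{(2-N)/2}S^{N/2}$; the $G$-invariance of the sequence forces the bubbles to appear along complete $G$-orbits, so every bubble orbit contributes at least $\tfrac{\min\#Gx}{N}\mu_0^{(2-N)/2}S^{N/2}$, and combining this with the minimal energy of any nontrivial weak limit lying on the limiting Nehari constraint yields the threshold $c_G^\sharp$. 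I expect this to be the main technical obstacle, since one must carefully track the critical coupling term $|u|^\alpha|v|^\beta$ throughout the decomposition and classify the possible fully nontrivial entire bubble profiles.

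Multiplicity below $c_G^\sharp$ then follows from a standard equivariant min-max / Krasnoselskii genus argument on $\mathcal{N}_\lambda^G$ modulo the natural $\mathbb{Z}_2\times\mathbb{Z}_2$ action $(u,v)\mapsto(\pm u,\pm v)$: it suffices to exhibit a compact symmetric subset of $\mathcal{N}_\lambda^G$ of genus $\geq n$ on which $J_\lambda<c_G^\sharp$. To build one, I would use the hypothesis that every $\Gamma$-orbit in $\Theta$ has positive dimension, which yields compactness of the embedding $D^{1,2}_0(\Theta)^\Gamma\hookrightarrow L^{2^*}(\Theta)^\Gamma$ (a Hebey--Vaugon type statement) and hence a fully nontrivial $\Gamma$-invariant ground state $(\bar u,\bar v)$ of the system in $\Theta$. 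Extending $(\bar u,\bar v)$ by zero to $\Omega$ (using $\Theta\subset\Omega$ and $G\subset\Gamma$) gives a low-energy $G$-invariant pair, and placing $n$ mutually disjointly supported translated copies along distinct $G$-orbits, followed by Nehari-rescaling their linear combinations, produces an $(n-1)$-sphere of test configurations in $\mathcal{N}_\lambda^G$ whose $J_\lambda$-values drop below $c_G^\sharp$ as soon as $\min_{\bar\Omega}\#Gx>\ell_n$ for a suitably large $\ell_n=\ell_n(\Gamma,\Theta)$.

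For the first min-max value, which coincides with $\inf_{\mathcal{N}_\lambda^G} J_\lambda$, I would observe that $\mathcal{N}_\lambda^G$ and $J_\lambda$ are both invariant under $(u,v)\mapsto(|u|,|v|)$ (each term depends on $u,v$ only through $|u|,|v|$, and $|\nabla|u||=|\nabla u|$ almost everywhere), so a minimizer may be chosen with $u,v\geq 0$; the strong maximum principle applied componentwise then upgrades this to strict positivity, yielding the positive least energy $G$-invariant solution $(u_1,v_1)$.
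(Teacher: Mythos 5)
Your plan only works for $\lambda<0$; as written it has a genuine gap in the cooperative case. You never use hypothesis $(A)$, and you run the same Nehari/min-max machinery for both signs of $\lambda$, but the ingredients you rely on are only available for $\lambda<0$: the set $\{u\neq0,\,v\neq0,\,\partial_uE(u,v)u=0=\partial_vE(u,v)v\}$ is a closed $\mathcal{C}^1$ natural constraint with the uniform lower bounds of Proposition \ref{prop:nehari} and Proposition \ref{prop:nonmin} only when $\lambda<0$ (for $\lambda>0$ it need not be closed, since sequences can degenerate to semitrivial pairs). More seriously, your key claim in the compactness step --- that every bubble in the equivariant profile decomposition carries energy at least $\frac1N\mu_0^{\frac{2-N}{2}}S^{\frac N2}$ --- is false for $\lambda>0$: fully nontrivial entire solutions of the cooperative limit system \eqref{eq:limitsystem} can have arbitrarily small energy as $\lambda\to+\infty$ (test with synchronized pairs $(tw,tw)$; the mountain-pass level scales like a negative power of $\mu_1+\mu_2+2^*\lambda$), so the threshold $c_G^\sharp$ cannot be justified this way and the whole $(PS)$ argument collapses for $\lambda>0$. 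The paper treats $\lambda>0$ by a completely different and much shorter route: assumption $(A)$ enters precisely here, through Lemma \ref{lem:synchronized}, which converts every nontrivial solution $w$ of the scalar problem \eqref{eq:bc} into a synchronized solution $(sw,tw)$ of \eqref{eq:system}, and the multiplicity and energy bound are then imported from the known equivariant result for \eqref{eq:bc} in \cite{cf0}.

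For $\lambda<0$ your outline does match the paper's proof (equivariant Struwe decomposition, Theorem \ref{thm:struwe}, giving $(PS)_c$ on $\mathcal{N}(\Omega)^G$ below $\bigl(1+\min_{x\in\bar\Omega}\#Gx\bigr)\frac1N\mu_0^{\frac{2-N}{2}}S^{\frac N2}$, then Szulkin's genus min-max), but your construction of the genus-$n$ sets is shakier than necessary. Translates of a $\Gamma$-invariant profile ``along distinct $G$-orbits'' are not individually $G$-invariant, and Nehari-rescaling a linear combination whose components overlap is obstructed when $\lambda<0$ (the two-parameter projection onto the fully nontrivial Nehari set need not exist for arbitrary pairs). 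The paper's Lemma \ref{lem:genus} avoids both issues by choosing $2n$ pairwise disjoint $G$-invariant open sets and pairs $(u_i,v_i)$ with \emph{disjoint} supports, so the coupling term vanishes and each component is rescaled separately; moreover the test sets are built inside $\Theta$ with $\Gamma$-invariant functions, so the resulting levels $c_n^\Gamma(\Theta)$ depend only on $\Gamma,\Theta$, and $\ell_n$ is defined from $c_n^\Gamma(\Theta)$ so that $c_j^G(\Omega)\leq c_n^\Gamma(\Theta)$ automatically lies below the $(PS)$ threshold once $\min_{x\in\bar\Omega}\#Gx>\ell_n$ --- you do not estimate the test energy against $c_G^\sharp$ domain by domain. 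Finally, the theorem only asserts $u_1\geq0$, $v_1\geq0$, obtained by replacing $(u_1,v_1)$ with $(|u_1|,|v_1|)$; your additional appeal to the strong maximum principle is unnecessary and delicate for $\lambda<0$, $\alpha<2$, since the right-hand side of the $u$-equation has no sign.
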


A similar statement for the single equation \eqref{eq:bc} was proved in \cite{cf0}. Nonequivalent means that, if $i\neq j$, then $(u_i,v_i)$ is different from $\pm(u_j,v_j)$, $\pm(u_j,-v_j)$, $\pm(v_j,u_j)$ and $\pm(v_j,-u_j)$. 

The solutions given by Theorem \ref{thm:bounded} are synchronized if $\lambda>0$. In contrast, as shown in \cite[Proposition 2.3]{cp}, there exists $\lambda_* <0$, depending on $\mu_{1},\mu_{2},\alpha, \beta$, such that no solution is synchronized if $\lambda<\lambda_*$. 

If every $G$-orbit in $\Omega$ is infinite, Theorem \ref{thm:bounded} yields the following result.

\begin{corollary} \label{cor:bounded}
Assume $(A)$. If $\Omega$ is a $G$-invariant bounded smooth domain in $\mathbb{R}^N$ and the $G$-orbit of every point $x\in\Omega$ has positive dimension, then, for each $\lambda \neq 0$, the system \eqref{eq:system} has infinitely many fully nontrivial $G$-invariant solutions.
\end{corollary}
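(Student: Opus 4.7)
The plan is to derive Corollary \ref{cor:bounded} directly from Theorem \ref{thm:bounded}. I would set $\Gamma := G$ and choose $\Theta$ to be any nonempty $G$-invariant bounded smooth subdomain with $\bar{\Theta} \subset \Omega$; such a $\Theta$ is easily manufactured, for example by taking a small $G$-invariant tubular neighborhood of any $G$-orbit contained in $\Omega$ and, if necessary, smoothing its boundary by a $G$-invariant convolution. Every orbit of $\Theta$ inherits positive dimension from $\Omega$, so $\Gamma$ and $\Theta$ satisfy the standing hypotheses preceding Theorem \ref{thm:bounded}, which therefore supplies a threshold $\ell_n > 0$ for each $n\in\mathbb{N}$.

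The remaining task is to verify $\min_{x\in\bar{\Omega}}\#Gx > \ell_n$; I would in fact prove the stronger statement that $\#Gx = \infty$ for every $x \in \bar{\Omega}$. Since $G$ is a compact Lie group, $\#Gx$ is finite precisely when the stabilizer $G_x$ contains the identity component $G^0$, equivalently when $x$ lies in the linear subspace $V := (\mathbb{R}^N)^{G^0}$ of $G^0$-fixed vectors. The hypothesis that every $G$-orbit in $\Omega$ has positive dimension gives $V\cap\Omega = \emptyset$. To upgrade this to $V\cap\bar{\Omega}=\emptyset$, I would exploit the smoothness of $\partial\Omega$: at a hypothetical point $y \in V\cap\partial\Omega$, one can average any smooth local defining function of $\Omega$ over $G$ to obtain a $G$-invariant smooth defining function $\rho$ near $y$. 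Since $\rho$ is $G^0$-invariant and $y$ is fixed by $G^0$, the gradient $\nabla\rho(y)$ is fixed by the linear action of $G^0$ on $\mathbb{R}^N$, so $\nabla\rho(y)\in V$. Being nonzero, this gradient would provide a direction inside $V$ transverse to $\partial\Omega$ at $y$, forcing points of $V$ close to $y$ to enter $\Omega$ and contradicting $V\cap\Omega = \emptyset$.

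Once $\min_{x\in\bar{\Omega}}\#Gx = \infty$ is established, the threshold condition $\min_{x\in\bar{\Omega}}\#Gx > \ell_n$ holds for every $n$, and Theorem \ref{thm:bounded} yields at least $n$ mutually nonequivalent fully nontrivial $G$-invariant solutions of \eqref{eq:system}. Since $n$ is arbitrary, the set of such solutions is infinite. I expect the only genuine obstacle to be the boundary verification $V\cap\partial\Omega = \emptyset$; this is precisely where smoothness of $\partial\Omega$ is essential, since in nonsmooth $G$-invariant domains an isolated singular boundary point belonging to $V$ can co-exist with positive-dimensional interior orbits, and the transversality argument above would break down.
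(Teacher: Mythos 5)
Your proposal is correct and follows the route the paper intends: the corollary is an immediate application of Theorem \ref{thm:bounded} with $\Gamma=G$, and in fact you may simply take $\Theta=\Omega$ itself (which already satisfies the standing hypotheses), making the construction of a smaller invariant subdomain unnecessary. Your verification that $\min_{x\in\bar{\Omega}}\#Gx=\infty$ — identifying the finite-orbit set with the fixed-point subspace of the identity component and excluding it from $\partial\Omega$ via smoothness and invariance (note the averaging should be over the stabilizer $G_y\supseteq G^0$, or one can just use that the outer normal at a fixed boundary point is $G_y$-fixed) — correctly supplies the boundary detail the paper leaves implicit.
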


Furthermore, Theorem \ref{thm:bounded} yields multiple solutions even in domains which have finite $G$-orbits. Let us give an example. If $2k\leq N$ we write $\mathbb{R}^N\equiv\mathbb{C}^k\times\mathbb{R}^{N-2k}$ and the points in $\mathbb{R}^N$ as $(z,x)$ with $z\in\mathbb{C}^k$, $x\in\mathbb{R}^{N-2k}$. The group $\mathbb{S}^1:=\{\mathrm{e}^{\mathrm{i}\vartheta}:\vartheta\in[0,2\pi)\}$ of unit complex numbers acts on $\mathbb{R}^N$ by $\mathrm{e}^{\mathrm{i}\vartheta}(z,x):=(\mathrm{e}^{\mathrm{i}\vartheta}z,x)$. The $\mathbb{S}^1$-orbit of $(z,x)$ is homeomorphic to $\mathbb{S}^1$ iff $z\neq 0$. For each $m\geq 2$, let $\mathbb{Z}_m$ be the cyclic subgroup of $\mathbb{S}^1$ generated by $\mathrm{e}^{2\pi\mathrm{i}/m}$. For these group actions we obtain the following result.

\begin{corollary} \label{cor:example}
Let $\Theta$ be an $\mathbb{S}^1$-invariant bounded smooth domain contained in $(\mathbb{C}^k\smallsetminus\{0\})\times\mathbb{R}^{N-2k}$, and assume $(A)$. Then, for each $\lambda \neq0$, the system \eqref{eq:system} has infinitely many $\mathbb{S}^1$-invariant fully nontrivial solutions in $\Theta$.

Moreover, for any given $n$ there exists $\ell_n>0$ such that, if $m>\ell_n$, then the system \eqref{eq:system} has $n$ nonequivalent $\mathbb{Z}_m$-invariant fully nontrivial solutions in every $\mathbb{Z}_m$-invariant bounded smooth domain $\Omega$ which contains $\Theta$ and does not intersect $\{0\}\times\mathbb{R}^{N-2k}$.
\end{corollary}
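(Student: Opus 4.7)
The plan is to derive both assertions directly from the general results just established: the first from Corollary \ref{cor:bounded}, the second from Theorem \ref{thm:bounded}. In each case the only real work is to verify the orbit condition for the specified group action on $\mathbb{R}^N\equiv\mathbb{C}^k\times\mathbb{R}^{N-2k}$.

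For the first assertion, I would take $G=\mathbb{S}^1$ and apply Corollary \ref{cor:bounded} with $\Omega=\Theta$. Since $\Theta\subset(\mathbb{C}^k\smallsetminus\{0\})\times\mathbb{R}^{N-2k}$, every $(z,x)\in\Theta$ has $z\neq 0$, and its $\mathbb{S}^1$-orbit $\{(\mathrm{e}^{\mathrm{i}\vartheta}z,x):\vartheta\in[0,2\pi)\}$ is then homeomorphic to $\mathbb{S}^1$, which is one-dimensional. Hence every $\mathbb{S}^1$-orbit in $\Theta$ has positive dimension, and Corollary \ref{cor:bounded} delivers infinitely many $\mathbb{S}^1$-invariant fully nontrivial solutions for each $\lambda\neq 0$.

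For the second assertion, I would invoke Theorem \ref{thm:bounded} with $\Gamma=\mathbb{S}^1$ and the given $\Theta$, obtaining the constant $\ell_n>0$ furnished by that theorem. Now let $\Omega$ be any $\mathbb{Z}_m$-invariant bounded smooth domain containing $\Theta$ and disjoint from $\{0\}\times\mathbb{R}^{N-2k}$. Since $\mathbb{Z}_m=\langle\mathrm{e}^{2\pi\mathrm{i}/m}\rangle$ is a closed subgroup of $\Gamma=\mathbb{S}^1$, and since for every $(z,x)\in\bar{\Omega}$ we have $z\neq 0$ and the $\mathbb{Z}_m$-orbit of $(z,x)$ consists of exactly $m$ distinct points, we obtain $\min_{x\in\bar{\Omega}}\#\mathbb{Z}_m x=m$. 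Choosing $m>\ell_n$ satisfies the hypothesis of Theorem \ref{thm:bounded}, yielding $n$ nonequivalent $\mathbb{Z}_m$-invariant fully nontrivial solutions, as claimed.

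There is essentially no genuine obstacle here, since this corollary is designed as an explicit illustration of the abstract theorem. The only subtlety worth flagging is that one needs the \emph{closure} $\bar{\Omega}$ (and not just $\Omega$) to avoid the fixed-point subspace $\{0\}\times\mathbb{R}^{N-2k}$, so that no $\mathbb{Z}_m$-orbit in $\bar{\Omega}$ degenerates to a single point and the minimal orbit size is exactly $m$. This is the natural reading of the non-intersection hypothesis, and in any reasonable geometric realization of $\Omega$ it can be arranged by an arbitrarily small smoothing near that subspace.
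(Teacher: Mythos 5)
Your derivation is correct and is exactly the paper's intended route: the paper offers no separate proof of this corollary, presenting it as an immediate consequence of Corollary \ref{cor:bounded} (for the $\mathbb{S}^1$ case, since every $\mathbb{S}^1$-orbit in $\Theta$ is a circle) and of Theorem \ref{thm:bounded} with $\Gamma=\mathbb{S}^1$ (since every $\mathbb{Z}_m$-orbit of a point with $z\neq 0$ has exactly $m$ elements). One correction to your final remark: you cannot ``arrange'' the closure condition by smoothing $\Omega$, because the statement asserts the conclusion for \emph{every} admissible $\Omega$; fortunately no modification is needed, since the feared degeneration cannot occur. Indeed, if some $p\in\partial\Omega\cap\left(\{0\}\times\mathbb{R}^{N-2k}\right)$, then $p$ is a fixed point of the $\mathbb{Z}_m$-action, so the inner unit normal $\nu$ to the smooth invariant boundary at $p$ must be a fixed vector of the action, i.e. $\nu\in\{0\}\times\mathbb{R}^{N-2k}$ (and no such $\nu$ exists at all if $N=2k$); but then $p+t\nu\in\Omega\cap\left(\{0\}\times\mathbb{R}^{N-2k}\right)$ for small $t>0$, contradicting the hypothesis that $\Omega$ does not meet $\{0\}\times\mathbb{R}^{N-2k}$. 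Hence $\bar{\Omega}$ automatically avoids $\{0\}\times\mathbb{R}^{N-2k}$, every point of $\bar{\Omega}$ has trivial $\mathbb{Z}_m$-isotropy, and $\min_{x\in\bar{\Omega}}\#\mathbb{Z}_m x=m$, which is what Theorem \ref{thm:bounded} requires.
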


In particular, if $N=2k$, we may take $\Theta$ to be an annulus. Then, Corollary \ref{cor:example} yields examples of annular domains, with a hole of arbitrary size, in which the system \eqref{eq:system} has a prescribed number of solutions for any $\lambda\neq 0$. A similar statement for the single equation \eqref{eq:bc} was proved in \cite{cpa}.

Our next result says that the $G$-invariant least energy solutions given by Theorem \ref{thm:bounded} exhibit phase separation as $\lambda\to -\infty$.

\begin{theorem} \label{thm:separation}
Let $\Omega$ be a $G$-invariant bounded smooth domain. Assume that, for some sequence $(\lambda_{k})$ with $\lambda_{k}\to -\infty$, there exists a positive fully nontrivial least energy $G$-invariant solution $(u_{k},v_{k})$ to the system \eqref{eq:system} with $\lambda=\lambda_{k}$, such that
$$\int_{\Omega}(|\nabla u_k|^2+|\nabla v_k|^2) \leq \left(1+\min_{x\in\bar{\Omega}} \#Gx\right)\mu_0^{\frac{2-N}{2}}S^{\frac{N}{2}}\quad\text{for every }\;k\in \mathbb{N},$$
where $\mu_0:=\max\{\mu_1,\mu_2\}$ and $S$ is the best Sobolev constant for the embedding $D^{1,2}(\mathbb{R}^N)\hookrightarrow L^{2^*}(\mathbb{R}^N)$. 

Then, after passing to a subsequence, we have that $u_{k}\to u_{\infty}$ and $v_{k}\to v_{\infty}$ strongly in $D_0^{1,2}(\Omega)$, the functions $u_{\infty}$ and $v_{\infty}$ are continuous and $G$-invariant, $u_{\infty}\geq 0$, $v_{\infty}\geq 0$, $u_{\infty}v_{\infty}\equiv 0$, $u_{\infty}$ solves the problem
$$-\Delta u=\mu_{1}|u|^{{2}^*-2}u,\qquad u\in D_{0}^{1,2}(\Omega_{1}),$$
and $v_{\infty}$ solves the problem
$$-\Delta v=\mu_{2}|v|^{{2}^*-2}v,\qquad v\in D_{0}^{1,2}(\Omega_{2}),$$
where $\Omega_{1}:=\{x\in\Omega:u_{\infty}(x)>0\}$ and $\Omega_{2}:=\{x\in\Omega:v_{\infty}(x)>0\}$. Moreover, $\Omega_{1}$ and $\Omega_{2}$ are $G$-invariant, $\Omega_{1}\cap\Omega_{2}=\emptyset$ and $\overline{\Omega_{1}\cup\Omega_{2}}=\overline{\Omega}.$
\end{theorem}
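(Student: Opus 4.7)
The plan is to combine the uniform energy bound with the strong repulsion forced by $\lambda_{k}\to-\infty$. From $\int_{\Omega}(|\nabla u_{k}|^{2}+|\nabla v_{k}|^{2})\le C$ I would extract a subsequence along which $u_{k}\rightharpoonup u_{\infty}$ and $v_{k}\rightharpoonup v_{\infty}$ weakly in $D_{0}^{1,2}(\Omega)$, strongly in $L^{p}(\Omega)$ for every $p<2^{*}$, and a.e.\ in $\Omega$; both limits are $G$-invariant and non-negative. Testing the system with $(u_{k},v_{k})$ and adding the resulting identities gives
$$2^{*}\lambda_{k}\int_{\Omega}u_{k}^{\alpha}v_{k}^{\beta}=\int_{\Omega}(|\nabla u_{k}|^{2}+|\nabla v_{k}|^{2})-\mu_{1}\int_{\Omega}u_{k}^{2^{*}}-\mu_{2}\int_{\Omega}v_{k}^{2^{*}},$$
and the right-hand side is bounded, uniformly in $k$, by Sobolev. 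Since $\lambda_{k}\to-\infty$, this forces $\int_{\Omega}u_{k}^{\alpha}v_{k}^{\beta}\to 0$, and Fatou gives $u_{\infty}v_{\infty}\equiv 0$ a.e. Consequently $\Omega_{1}:=\{u_{\infty}>0\}$ and $\Omega_{2}:=\{v_{\infty}>0\}$ are disjoint and $G$-invariant.

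Next, I would upgrade weak to strong convergence in $D_{0}^{1,2}(\Omega)$. With the coupling integral vanishing, a Struwe-type profile decomposition adapted to the $G$-equivariant two-component setting reduces the concentration analysis to that for two uncoupled critical equations; the prescribed energy bound then rules out any $G$-invariant Aubin-Talenti bubble, since a bubble located at an orbit of size $\ge\min_{\bar\Omega}\#Gx$ would contribute at least $(\min_{\bar\Omega}\#Gx)\mu_{0}^{(2-N)/2}S^{N/2}/N$ to the energy and, together with the residual energy of $(u_{\infty},v_{\infty})$ and the least-energy property of $(u_{k},v_{k})$, would exceed the given bound.

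Once strong convergence is secured, I pass to the limit in the weak formulation to identify the limit equations. Uniform $C^{0,\eta}$ estimates for the pair $(u_k,v_k)$, obtained via Moser iteration in the spirit of the phase-separation literature and leveraging $\int u_k^\alpha v_k^\beta\to 0$, ensure that $v_k\to 0$ uniformly on compact subsets of $\Omega_1$ (and symmetrically for $u_k$ on compacts of $\Omega_2$). This makes the coupling term negligible on test functions $\varphi\in C_c^\infty(\Omega_1)$, so $u_\infty$ solves $-\Delta u=\mu_1 u^{2^*-1}$ weakly in $\Omega_1$; the argument for $v_\infty$ on $\Omega_2$ is symmetric. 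Classical elliptic regularity then gives continuity on each open set, and the vanishing on the complement extends continuity to all of $\Omega$.

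The hardest step is the identity $\overline{\Omega_{1}\cup\Omega_{2}}=\overline{\Omega}$: I must exclude an open region $\Omega_{0}\subset\Omega$ on which both $u_{\infty}$ and $v_{\infty}$ vanish identically. Here I would exploit the least-energy property of $(u_{k},v_{k})$ among positive $G$-invariant solutions: if $\Omega_{0}$ contained a ball $B$, one could construct, for sufficiently negative $\lambda_{k}$, an admissible $G$-invariant competitor by grafting a small, symmetrized single-component profile inside $B$ and perturbing $(u_{k},v_{k})$ away from its support, producing a pair with strictly smaller energy---contradicting minimality in the limit. A more robust alternative is to adapt the uniform Hölder regularity approach for competing systems, in the spirit of Noris-Tavares-Terracini-Verzini, to the critical exponent setting, which would directly force the vanishing set of the pair to have empty interior.
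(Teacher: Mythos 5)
Your outline of the first part (Fatou plus the divergence of $\lambda_k$ giving $u_\infty v_\infty\equiv 0$, then ruling out bubbles via the energy bound) is in the spirit of the paper, but the two crucial later steps are not carried out, and the way you propose to do them is where the genuine gaps lie. First, the claim that neither $u_\infty$ nor $v_\infty$ vanishes is not addressed at all in your proposal, yet it is the heart of the paper's proof: the energy bound $(1+\min\#Gx)\mu_0^{(2-N)/2}S^{N/2}$ alone does not forbid a bubble, because a \emph{semitrivial} bubble in one component costs only $(\min\#Gx)\mu_i^{(2-N)/2}S^{N/2}/N$, which fits under the bound. The paper handles this by assuming, say, $u_\infty=0$, extracting a $G$-orbit of bubbles for $u_k$ via the rescaling argument of Lemma \ref{lem:struwe}, and then testing the equations with the bubble profile and with $v_\infty$ to show that suitable multiples $\widetilde{s}\,\widetilde{u}$ and $\widetilde{t}v_\infty$ lie on the respective Nehari manifolds with $\widetilde{s},\widetilde{t}\in(0,1]$; the resulting lower bound $(\min\#Gx)\mu_1^{(2-N)/2}S^{N/2}+\mu_2^{(2-N)/2}S^{N/2}$ strictly exceeds the assumed bound (strictness coming from the bounded domain $\Omega$), giving the contradiction. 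Your proposal never rules out the semitrivial-bubble scenario, so "both limits are nontrivial" and hence the whole phase-separation picture is unproved.

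Second, your route to strong convergence, the limit equations, continuity, and $\overline{\Omega_1\cup\Omega_2}=\overline{\Omega}$ relies on tools (uniform $C^{0,\eta}$ estimates via Moser iteration, a Noris--Tavares--Terracini--Verzini-type uniform H\"older bound at the critical exponent, or an ad hoc grafting competitor) that are neither established in the paper nor standard in the critical setting, and you give no construction for them. The paper instead uses a purely variational comparison: it introduces the auxiliary sign-changing problem \eqref{eq:limitprob} with energy $J$ and the nodal-type set $\mathcal{E}^G$, observes that $c^G_{\lambda}\le c^G_\infty:=\inf_{\mathcal{E}^G}J$ for all $\lambda<0$, and shows that $su_\infty-tv_\infty\in\mathcal{E}^G$ with $s,t\in(0,1]$. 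The chain of inequalities $c^G_\infty\le\frac{1}{N}(\|su_\infty\|^2+\|tv_\infty\|^2)\le\lim_k c^G_{\lambda_k}\le c^G_\infty$ then yields simultaneously strong convergence, $s=t=1$, and that $u_\infty-v_\infty$ minimizes $J$ on $\mathcal{E}^G$; the argument of \cite[Lemma 2.6]{ccn} makes $u_\infty-v_\infty$ a critical point of $J$, whence continuity (by regularity for the single equation \eqref{eq:limitprob}) and the identities for $\Omega_1$, $\Omega_2$, including $\overline{\Omega_1\cup\Omega_2}=\overline{\Omega}$, follow at once because $u_\infty-v_\infty$ is a sign-changing solution of \eqref{eq:limitprob} whose positive and negative parts are exactly $u_\infty$ and $-v_\infty$. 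Without either this variational mechanism or a genuinely proved uniform regularity theory, your steps three and four remain programmatic rather than a proof.
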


Note that, if $\Omega$ is an annulus  and the solutions $(u_k,v_k)$ given by Theorem \ref{thm:separation} are radial, then the limiting domains $\Omega_1$ and $\Omega_2$ must be annuli.

Phase separation for weakly coupled subcritical systems in a bounded domain was established in \cite{ctv1,ctv2} via minimization on a suitable constraint. Critical Brezis-Nirenberg type systems, obtained by adding a linear term to both equations in \eqref{eq:system}, have been recently treated in \cite{cz1,cz2,llw,pt}. For these systems, phase separation occurs in dimensions $N\geq 6$; see \cite{cz2}. 

Our final result concerns the case when the domain is the whole space $\mathbb{R}^N$. For the system \eqref{eq:system} in $\mathbb{R}^N$, with $\alpha=\beta$, it is shown in \cite{cz1,cz2} that there exists a positive fully nontrivial solution for all $\lambda>0$ if $N\geq 5$ and for a wide range of $\lambda>0$ if $N=4$. For $\lambda<0$ a positive fully nontrivial solution was exhibited in \cite{cp}, and infinitely many fully nontrivial solutions were obtained in \cite{cp} when $\mu_1=\mu_2$, $\alpha=\beta$ and $\lambda\leq\frac{\mu_1}{\alpha}$. These results are contained the following one.

\begin{theorem} \label{thm:entire}
Assume $(A)$. If $\Omega=\mathbb{R}^N$, then, for each $\lambda \neq 0$, the system  \eqref{eq:system} has infinitely many fully nontrivial solutions, which are not conformally equivalent, one of which is positive.
\end{theorem}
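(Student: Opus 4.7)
The plan is to exploit the conformal invariance of the critical system.  Since $\alpha+\beta=2^*$, both the pure critical nonlinearity and the coupling term transform under the conformal group of $\mathbb{R}^N$ in the same equivariant way, so via inverse stereographic projection $\pi^{-1}:\mathbb{R}^N\to\mathbb{S}^N\smallsetminus\{p\}$ the system \eqref{eq:system} on $\mathbb{R}^N$ is equivalent, under the standard conformal weighting of functions, to a Yamabe-type coupled system
$$\begin{cases}
-\Delta_g \widetilde u+c_N\widetilde u=\mu_1|\widetilde u|^{2^*-2}\widetilde u+\lambda\alpha|\widetilde u|^{\alpha-2}|\widetilde v|^\beta\widetilde u,\\
-\Delta_g \widetilde v+c_N\widetilde v=\mu_2|\widetilde v|^{2^*-2}\widetilde v+\lambda\beta|\widetilde u|^\alpha|\widetilde v|^{\beta-2}\widetilde v,
\end{cases}$$
on the round sphere $(\mathbb{S}^N,g)$, with $c_N=N(N-2)/4$.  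The correspondence preserves energy.

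For the positive solution, I would use the synchronized ansatz.  Under $(A)$, choosing positive constants $s,t$ exactly as in the proof of Theorem \ref{thm:bc} applied to the Aubin--Talenti instanton $U(x)=[N(N-2)]^{(N-2)/4}(1+|x|^2)^{-(N-2)/2}$ yields a positive fully nontrivial synchronized solution $(sU,tU)$ of \eqref{eq:system} directly on $\mathbb{R}^N$ when $\lambda>0$.  For $\lambda<0$, the results of \cite{cp} already provide such a positive solution, and it can equally well be recovered as the $G$-invariant ground state of the symmetric variational framework sketched below with $G\supset O(N)$.

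For the infinite family, pick a closed subgroup $G\subset O(N+1)$ acting isometrically on $\mathbb{S}^N$ such that every $G$-orbit has positive dimension---for instance $G=O(k)\times O(N+1-k)$ with $2\leq k\leq N-1$.  Then $H^1(\mathbb{S}^N)^G\hookrightarrow L^{2^*}(\mathbb{S}^N)$ is compact, so by Palais' principle of symmetric criticality the $G$-invariant critical points of the energy functional for the spherical system are weak solutions.  Exploiting the $\mathbb{Z}_2$-symmetry $(u,v)\mapsto(-u,-v)$ together with a Lusternik--Schnirelmann/genus argument carried out on a fully-nontrivial Nehari-type constraint set (analogous to the one used to prove Theorem \ref{thm:bounded}), one extracts an unbounded sequence of critical values $c_n\to\infty$.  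Pulling these back to $\mathbb{R}^N$ yields an infinite family of fully nontrivial solutions to \eqref{eq:system}.  Since conformally equivalent solutions have equal energies, passing to a subsequence with strictly increasing energies gives infinitely many which are pairwise non-conformally-equivalent.

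The main obstacle is enforcing full nontriviality along the min-max scheme.  The ordinary Nehari manifold for a coupled system is fibered into semi-trivial branches, so the constraint set must be chosen so that PS sequences remain uniformly bounded away from the semi-trivial stratum.  In the competitive regime $\lambda<0$ this is the delicate point, because the interaction term is nonpositive and can vanish in limits; the standard remedy is to compare the min-max levels to the energy of a symmetric Aubin--Talenti bubble supported on a smallest $G$-orbit, which both rules out partial blow-up onto a semi-trivial configuration and supplies the energy ceiling that keeps the min-max inside the constraint.
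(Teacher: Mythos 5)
Your plan for the competitive case is essentially the paper's: symmetry under a conformal action of a closed subgroup of $O(N+1)$ whose orbits all have positive dimension (the paper keeps the action on $\mathbb{R}^N$ rather than passing to $\mathbb{S}^N$, but this is equivalent), restriction of $E$ to the fully nontrivial Nehari set $\mathcal{N}(\mathbb{R}^N)^{\mathscr{G}}$, the Palais--Smale condition at \emph{every} level coming from the positive-dimensional orbits (Lemma \ref{lem:PSentire}), and Szulkin's genus-based Ljusternik--Schnirelmann theorem, with non-conformal-equivalence deduced from the conformal invariance of the energy. For $\lambda<0$ your outline is sound, except that your closing remark is misdirected: no ``energy ceiling'' obtained by comparison with a bubble on a ``smallest $\mathscr{G}$-orbit'' is needed or even meaningful here, since every orbit has positive dimension (there are no finite orbits to concentrate on), compactness holds at all levels, and full nontriviality is automatic because for $\lambda<0$ both components of any point of $\mathcal{N}$ satisfy the uniform lower bounds of Proposition \ref{prop:nehari}(a), so the constraint is a closed natural constraint uniformly separated from the semitrivial pairs. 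Also, your claim $c_n\to\infty$ is not justified; the paper gets infinitely many \emph{distinct} critical values more cheaply, by noting that each critical set $K_c$ is compact, hence of finite genus, and invoking the last assertion of Theorem \ref{thm:szulkin}.

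The genuine gap is in the cooperative case $\lambda>0$. Your synchronized ansatz with the Aubin--Talenti bubble produces exactly one (positive) solution; the infinite family is supposed to come from the same genus scheme on a ``fully-nontrivial Nehari-type constraint,'' but nothing in your argument (or in the paper) makes that constraint usable when $\lambda>0$: Proposition \ref{prop:nehari} (manifold structure, natural constraint, and the lower bounds on $\|u\|$, $\|v\|$) is proved only for $\lambda<0$, and for $\lambda>0$ Palais--Smale sequences on $\mathcal{N}$ can degenerate to semitrivial configurations, with no mechanism in your proposal to exclude this --- an energy comparison with a single bubble certainly cannot do it at arbitrarily high min-max levels. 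The paper's route for $\lambda>0$ is entirely different and is precisely where assumption $(A)$ is used: Lemma \ref{lem:synchronized} converts \emph{every} nontrivial solution of the scalar problem \eqref{eq:bc} in $\mathbb{R}^N$ into a fully nontrivial synchronized solution of \eqref{eq:system}, and Ding's theorem supplies a sequence $(w_k)$ of such scalar solutions with $\|w_k\|\to\infty$, hence with pairwise distinct energies, giving infinitely many non-conformally-equivalent solutions. To repair your proposal you should either adopt this synchronized reduction for $\lambda>0$ or supply a genuinely new argument for the constrained min-max in the cooperative regime; as written, the cooperative half of the theorem is not proved.
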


As in \cite{cp}, we use the conformal invariance of the system \eqref{eq:system} in $\mathbb{R}^N$ to prove Theorem \ref{thm:entire}. A different approach was used in \cite{glw} to establish the existence of positive multipeak solutions for $\lambda<0$ in dimension $N=3$. 

The positive entire solutions given by Theorem \ref{thm:entire} also exhibit phase separation as $\lambda\to -\infty$. This was shown in \cite[Theorem 1.2]{cp}. Moreover, a precise description of the limit domains $\Omega_1$ and $\Omega_2$ is given in \cite{cp}.

To obtain our results, we use variational methods. As in the case of the single equation \eqref{eq:bc}, the main difficulty is the lack of compactness of the functional associated to the system \eqref{eq:system}. We prove a representation theorem for $G$-invariant Palais-Smale sequences for this functional, which provides a full description of the loss of compactness in the presence of symmetries for every $\lambda\in\mathbb{R}$; see Theorem \ref{thm:struwe} below.

This paper is organized as follows. In Section \ref{sec:variational setting} we introduce the variational setting. Section \ref{sec:compactness} is devoted to the study of the loss of compactness in the presence of symmetries. Our main results are proved in Section \ref{sec:cooperative} in the cooperative case, and in Section \ref{sec:competitive} in the competitive case. Finally, in the Appendix we derive some Brezis-Lieb type results for the interaction term.

\section{The variational setting} \label{sec:variational setting}

Throughout this paper we assume that $\mu_{1},\mu_{2}>0$, $\lambda\in\mathbb{R}$, $\alpha, \beta>1$ and $\alpha+\beta =2^{*}$.

Let $\Omega$ be a smooth domain in $\mathbb{R}^N$, $N\geq 3$. As usual, $D^{1,2}_0(\Omega)$ denotes the closure of $\mathcal{C}_c^\infty(\Omega)$ in the space $D^{1,2}(\mathbb{R}^N):=\{u\in L^{2^*}(\mathbb{R}^N):\nabla u\in L^2(\mathbb{R}^N,\mathbb{R}^N)\}$ equipped with the norm
$$\|u\|:=\left(\int_{\mathbb{R}^N} |\nabla u|^{2}\right)^{1/2}.$$
Let $\mathscr{D}(\Omega):=D_{0}^{1,2}(\Omega)\times D_{0}^{1,2}(\Omega)$ with the product norm
$$\|(u,v)\|:=\left(\int_{\Omega} |\nabla u|^{2} +|\nabla v|^{2}\right)^{1/2}.$$
The solutions to the system \eqref{eq:system} are the critical points of the functional $E:\mathscr{D}(\Omega)\to \mathbb{R}$ defined by 
\begin{equation*}
E(u,v):=\frac{1}{2}\int_{\Omega}(|\nabla u|^{2}+|\nabla v|^{2})-\frac{1}{2^{*}}\int_{\Omega}(\mu_{1}|u|^{2^{*}}+\mu_{2}|v|^{2^{*}})-\lambda\int_{\Omega}|u|^{\alpha}|v|^{\beta}.
\end{equation*}
Since $\alpha, \beta >1$, this functional is of class $\mathcal{C}^{1}$ and its derivative is given by
\begin{equation*}
E'(u,v)(\varphi,\psi)=\partial_{u}E(u,v)\varphi+\partial_{v}E(u,v)\psi,
\end{equation*}
where
\begin{align*}
\partial_{u}E(u,v)\varphi &:=\int_{\Omega}\nabla u\cdot\nabla \varphi-\int_{\Omega}\mu_{1} |u|^{2^{*}-2}u\varphi-\lambda\alpha\int_{\Omega}|u|^{\alpha-2}|v|^{\beta}u\varphi, \\
\partial_{v}E(u,v)\psi &:=\int_{\Omega}\nabla v\cdot\nabla \psi-\int_{\Omega}\mu_{2} |v|^{2^{*}-2}v\psi-\lambda\beta\int_{\Omega}|u|^{\alpha}|v|^{\beta-2}v\psi.
\end{align*}
The fully nontrivial solutions to the system \eqref{eq:system} belong to the set
$$\mathcal{N}(\Omega):=\{(u,v)\in \mathscr{D}(\Omega): u\neq0, v\neq0, \, \partial_{u}E(u,v)u=0, \, \partial_{v}E(u,v)v=0 \}.$$

\begin{proposition} \label{prop:nehari}
If $\lambda <0$, then the following statements hold true:
\begin{enumerate}
\item[(a)] For every $(u,v)\in\mathcal{N}(\Omega)$, one has that
$$\mu_{1}^{-(N-2)/2}S^{N/2}\leq \|u\|^{2}, \qquad \mu_{2}^{-(N-2)/2}S^{N/2}\leq\|v\|^{2},$$
where $S$ is the best constant for the embedding $D^{1,2}(\mathbb{R}^{N})\hookrightarrow L^{2^{*}}(\mathbb{R}^{N}).$
\item[(b)] $\mathcal{N}(\Omega)$ is a closed $\mathcal{C}^{1}$-submanifold of $\mathscr{D}(\Omega)$ of codimension $2$. The tangent space to $\mathcal{N}(\Omega)$\ at the point $(u,v)$ is the orthogonal complement in $\mathscr{D}(\Omega)$ of the linear subspace generated by $\nabla F_1(u,v)$ and $\nabla F_2(u,v)$, where $F_1(u,v):=\partial_{u}E(u,v)u$ and $F_2(u,v):=\partial_{v}E(u,v)v$.
\item[(c)] $\mathcal{N}(\Omega)$ is a natural constraint for the functional $E$, i.e., a critical point of the restriction of $E$ to $\mathcal{N}(\Omega)$ is a critical point of $E$.
\end{enumerate}
\end{proposition}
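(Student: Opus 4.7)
The strategy is to apply the regular-value theorem to the map $F := (F_1, F_2) : \mathscr{D}(\Omega) \to \mathbb{R}^2$, exploiting the sign of $\lambda$ to extract a clean algebraic inequality, and to read off the natural-constraint property from the resulting nondegeneracy.

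\textbf{Part (a).} I would rewrite the identity $F_1(u,v) = 0$ as
$$\|u\|^2 = \mu_1 \int_\Omega |u|^{2^*} + \lambda\alpha \int_\Omega |u|^\alpha |v|^\beta.$$
Since $\lambda < 0$ and the interaction integral is nonnegative, this forces $\|u\|^2 \leq \mu_1 \int_\Omega |u|^{2^*}$; combining with the Sobolev inequality $S \bigl(\int_\Omega |u|^{2^*}\bigr)^{2/2^*} \leq \|u\|^2$ and dividing by $\|u\|^2 \neq 0$ gives the stated bound. The bound on $\|v\|$ is symmetric.

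\textbf{Part (b).} I would view $\mathcal{N}(\Omega)$ as $F^{-1}(0)$ intersected with the open set $\{(u,v) : u \neq 0,\, v \neq 0\}$ and verify the regular-value hypothesis, which is equivalent to the non-vanishing of the determinant of
$$M(u,v) := \begin{pmatrix} \partial_u F_1(u,v)\,u & \partial_v F_1(u,v)\,v \\ \partial_u F_2(u,v)\,u & \partial_v F_2(u,v)\,v \end{pmatrix}.$$
A direct differentiation, followed by using $F_1 = F_2 = 0$ to eliminate the terms in $\int|u|^{2^*}$ and $\int|v|^{2^*}$, reduces the diagonal entries to $(2-2^*)\|u\|^2 + \lambda\alpha\beta C$ and $(2-2^*)\|v\|^2 + \lambda\alpha\beta C$, and the off-diagonal entries to $-\lambda\alpha\beta C$, where $C := \int_\Omega |u|^\alpha |v|^\beta \geq 0$. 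Since $\lambda < 0$ and $2 - 2^* < 0$, a brief expansion gives
$$\det M = (2-2^*)^2 \|u\|^2 \|v\|^2 + (2-2^*)\lambda\alpha\beta\, C \bigl(\|u\|^2 + \|v\|^2\bigr) > 0,$$
which establishes the submanifold structure with the claimed tangent space, and, combined with (a), the closedness of $\mathcal{N}(\Omega)$: any $\mathscr{D}(\Omega)$-limit of points in $\mathcal{N}(\Omega)$ still satisfies $F = 0$ by continuity, and its two components remain bounded below in norm by the estimates of (a), so they cannot vanish.

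\textbf{Part (c).} The natural-constraint property is the standard Lagrange multiplier argument: if $(u,v)$ is a critical point of $E|_{\mathcal{N}(\Omega)}$, then by (b) there exist $\mu, \nu \in \mathbb{R}$ with $\nabla E(u,v) = \mu \nabla F_1(u,v) + \nu \nabla F_2(u,v)$; pairing this identity with $(u,0)$ and $(0,v)$ and using $\partial_u E(u,v)u = 0 = \partial_v E(u,v)v$ produces a homogeneous linear system whose coefficient matrix is precisely $M(u,v)$, forcing $\mu = \nu = 0$ by the determinant computation above. The only genuine obstacle in the proof is therefore the sign of $\det M$ in (b); the hypothesis $\lambda < 0$ is essential there, since for $\lambda > 0$ the off-diagonal entries flip sign and the entries can in principle conspire to make $M$ degenerate, which is consistent with the paper's separate synchronized-solution approach in the cooperative regime.
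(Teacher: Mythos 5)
Your proposal is correct and follows essentially the same route as the paper, which simply defers to the standard Nehari-manifold argument of \cite[Proposition 2.1]{cp}: the sign $\lambda<0$ gives $\|u\|^2\leq\mu_1\int_\Omega|u|^{2^*}$ and hence the Sobolev lower bounds in (a), the nonsingularity of the $2\times 2$ matrix $M(u,v)$ (your determinant computation is the key point and is accurate) gives the submanifold structure and tangent space in (b), and the Lagrange-multiplier system with coefficient matrix $M(u,v)$ yields (c). No gaps worth flagging.
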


\begin{proof}
The proof follows as in \cite[Proposition 2.1]{cp}, with minor modifications.
\end{proof}

\begin{proposition} \label{prop:nonmin} 
If $\lambda <0$, then
$$\inf_{(u,v)\in\mathcal{N}(\Omega)}E(u,v)=\frac{1}{N}(\mu_{1}^{-(N-2)/2}+\mu_{2}^{-(N-2)/2})S^{N/2}$$
and this value is never attained by $E$ on $\mathcal{N}(\Omega)$.

\end{proposition}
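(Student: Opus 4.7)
The plan is threefold. \emph{Step 1 (simplification on $\mathcal{N}$):} adding the two Nehari identities $\partial_{u}E(u,v)u=0$ and $\partial_{v}E(u,v)v=0$ and using $\alpha+\beta=2^{*}$ to combine the critical and interaction terms, I would substitute back into the expression for $E$ to obtain the clean formula
$$E(u,v)=\tfrac{1}{N}\|(u,v)\|^{2}\qquad\text{for every }(u,v)\in\mathcal{N}(\Omega).$$
Together with Proposition~\ref{prop:nehari}(a), this immediately gives the lower bound
$$\inf_{\mathcal{N}(\Omega)}E\;\geq\;\tfrac{1}{N}\bigl(\mu_{1}^{-(N-2)/2}+\mu_{2}^{-(N-2)/2}\bigr)S^{N/2}.$$

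\emph{Step 2 (matching minimizing sequence):} to see this lower bound is sharp, I would place two small, disjointly supported bubbles inside $\Omega$. Pick distinct points $x_{1},x_{2}\in\Omega$ and $r>0$ with $\overline{B}_{r}(x_{1})\cap\overline{B}_{r}(x_{2})=\emptyset$ and $\overline{B}_{r}(x_{i})\subset\Omega$. Let $U_{\varepsilon}$ be the standard Aubin--Talenti bubble of concentration parameter $\varepsilon$, truncated by a fixed cut-off supported in $B_{r}(0)$ and centered at $x_{i}$, producing $\widetilde{u}_{\varepsilon}$ and $\widetilde{v}_{\varepsilon}$ with $\widetilde{u}_{\varepsilon}\widetilde{v}_{\varepsilon}\equiv 0$. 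The classical truncation estimates yield $\|\widetilde{u}_{\varepsilon}\|^{2},\int|\widetilde{u}_{\varepsilon}|^{2^{*}}\to S^{N/2}$ as $\varepsilon\to 0^{+}$, and similarly for $\widetilde{v}_{\varepsilon}$. Because the two supports are disjoint, the interaction integral vanishes identically, so the system defining $\mathcal{N}(\Omega)$ decouples and the positive scaling constants $s_{\varepsilon},t_{\varepsilon}$ placing $(s_{\varepsilon}\widetilde{u}_{\varepsilon},t_{\varepsilon}\widetilde{v}_{\varepsilon})$ on $\mathcal{N}(\Omega)$ are given explicitly, satisfying $s_{\varepsilon}^{2}\|\widetilde{u}_{\varepsilon}\|^{2}\to\mu_{1}^{-(N-2)/2}S^{N/2}$ and likewise for $t_{\varepsilon}$. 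By Step~1 the energies converge to the claimed value, proving the equality for the infimum.

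\emph{Step 3 (non-attainment):} suppose for contradiction $(u,v)\in\mathcal{N}(\Omega)$ achieves the infimum. Then equality must hold in both inequalities of Proposition~\ref{prop:nehari}(a). Revisiting their derivation --- which chains $\|u\|^{2}=\mu_{1}\int|u|^{2^{*}}+\lambda\alpha\int|u|^{\alpha}|v|^{\beta}\leq\mu_{1}\int|u|^{2^{*}}\leq\mu_{1}S^{-2^{*}/2}\|u\|^{2^{*}}$, and analogously for $v$ --- equality forces simultaneously
$$\int_{\Omega}|u|^{\alpha}|v|^{\beta}=0\qquad\text{and}\qquad \|u\|^{2}=S\|u\|_{L^{2^{*}}(\Omega)}^{2},\quad\|v\|^{2}=S\|v\|_{L^{2^{*}}(\Omega)}^{2}.$$
The second condition, after extending by zero, identifies $u$ and $v$ with Aubin--Talenti extremals on $\mathbb{R}^{N}$. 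If $\Omega\neq\mathbb{R}^{N}$ this is impossible because those extremals are strictly positive on all of $\mathbb{R}^{N}$ and cannot lie in $D_{0}^{1,2}(\Omega)$. If $\Omega=\mathbb{R}^{N}$, then $u$ and $v$ are strictly positive bubbles, contradicting the first condition.

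The main technical point is Step~2, since Steps~1 and 3 are essentially algebraic. However, even Step~2 amounts to a standard Brezis--Nirenberg-type cut-off computation, considerably simplified by the fact that placing the two approximate bubbles in disjoint balls automatically kills the interaction term and decouples the Nehari system into two scalar equations.
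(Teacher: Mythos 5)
Your argument is correct and is essentially the proof the paper has in mind when it refers to \cite[Proposition 2.2]{cp}: the identity $E=\frac{1}{N}\|\cdot\|^2$ on $\mathcal{N}(\Omega)$ plus Proposition \ref{prop:nehari}(a) for the lower bound, disjointly supported truncated bubbles (which kill the interaction term and decouple the Nehari conditions) for the matching upper bound, and the equality analysis in the chain of inequalities, together with the classification of Sobolev extremals as everywhere-positive Aubin--Talenti bubbles, for non-attainment. This is precisely the ``easy adaptation'' to a general domain $\Omega$ that the paper invokes, so no further comment is needed.
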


\begin{proof}
The argument used in \cite[Proposition 2.2]{cp} to prove this statement for $\mathbb{R}^N$ can be easily adapted to a general domain $\Omega$.
\end{proof}

As for the single equation \eqref{eq:bc}, one has the following nonexistence result. It is stated in \cite{ppw} for $\mu_1=1=\mu_2$ and $\lambda=\frac{1}{2^*}$, but the proof carries over to the general case. We give it here for the sake of completeness.

\begin{proposition} \label{prop:pohozhaev}
Let $\lambda\in\mathbb{R}$. If $\Omega \neq \mathbb{R}^N$ and $\Omega$ is strictly starshaped, then the system \eqref{eq:system} does not have a nontrivial solution.
\end{proposition}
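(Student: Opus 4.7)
The plan is to derive a Pohozaev-type identity for \eqref{eq:system} and exploit the fact that $\alpha+\beta=2^*$ makes the potential
\begin{equation*}
F(u,v):=\tfrac{\mu_1}{2^*}|u|^{2^*}+\tfrac{\mu_2}{2^*}|v|^{2^*}+\lambda|u|^\alpha|v|^\beta
\end{equation*}
positively $2^*$-homogeneous, so that the bulk terms cancel when Pohozaev is paired with the two Nehari identities. Translate coordinates so that $\Omega$ is strictly starshaped about the origin; then $x\cdot\nu(x)>0$ on $\partial\Omega$, where $\nu$ is the outer unit normal.

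First I would promote a weak solution $(u,v)\in\mathscr{D}(\Omega)$ to a classical one. A componentwise Brezis--Kato/Moser iteration applied to \eqref{eq:system}, using that the coupling term $|u|^{\alpha-2}|v|^\beta u$ is controlled by $|u|^{2^*-1}+|v|^{2^*-1}$, yields $u,v\in L^\infty(\Omega)$; elliptic regularity up to the smooth boundary then gives $u,v\in C^2(\Omega)\cap C^1(\overline{\Omega})$ with $u=v=0$ on $\partial\Omega$. I would then multiply the first equation by $x\cdot\nabla u$, the second by $x\cdot\nabla v$, integrate, and add. Since $u=v=0$ on $\partial\Omega$, the two coupling contributions combine into
\begin{equation*}
\lambda\int_\Omega x\cdot\nabla\bigl(|u|^\alpha|v|^\beta\bigr)\,dx=-\lambda N\int_\Omega |u|^\alpha|v|^\beta\,dx,
\end{equation*}
and the classical computation on the critical pure-power terms produces
\begin{equation*}
\tfrac{N-2}{2}\|(u,v)\|^2+\tfrac{1}{2}\int_{\partial\Omega}(|\partial_\nu u|^2+|\partial_\nu v|^2)(x\cdot\nu)\,dS=N\int_\Omega F(u,v)\,dx.
\end{equation*}

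Criticality now kicks in: using $\alpha+\beta=2^*$ and $\tfrac{N-2}{2}\cdot 2^*=N$, one checks the pointwise identity $NF(u,v)=\tfrac{N-2}{2}\bigl(u\,\partial_uF(u,v)+v\,\partial_vF(u,v)\bigr)$, while the two Nehari identities $\partial_uE(u,v)u=\partial_vE(u,v)v=0$ read together as $\|(u,v)\|^2=\int_\Omega(u\,\partial_uF+v\,\partial_vF)\,dx$. Substituting into the Pohozaev identity, both bulk contributions collapse to $\tfrac{N-2}{2}\|(u,v)\|^2$, leaving
\begin{equation*}
\int_{\partial\Omega}(|\partial_\nu u|^2+|\partial_\nu v|^2)(x\cdot\nu)\,dS=0.
\end{equation*}
Strict starshape then forces $\partial_\nu u=\partial_\nu v=0$ on $\partial\Omega$, and a unique continuation argument applied componentwise—rewriting each equation as $-\Delta u+V_1u=0$ with $V_1=-\mu_1|u|^{2^*-2}-\lambda\alpha|u|^{\alpha-2}|v|^\beta$ and analogously for $v$—delivers $u\equiv v\equiv 0$.

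The main obstacle I anticipate is this last step. Getting $V_i\in L^{N/2}_{\mathrm{loc}}$, the standard threshold for Jerison--Kenig unique continuation, is immediate for the pure-power piece but subtle for the coupling term when $\alpha<2$, because $|u|^{\alpha-2}|v|^\beta$ is singular on $\{u=0\}$. One can circumvent this by writing each equation as $-\Delta u=g(x)$ with $g=\mathrm{sgn}(u)\bigl(\mu_1|u|^{2^*-1}+\lambda\alpha|u|^{\alpha-1}|v|^\beta\bigr)\in L^\infty(\Omega)$ (which uses only $\alpha>1$), extending $(u,v)$ by zero across $\partial\Omega$ thanks to the vanishing of the Cauchy data to obtain a $C^1$ function on $\mathbb{R}^N$ that solves the equation in the distributional sense and vanishes on a nonempty open set, and then invoking the standard unique continuation principle for the Laplacian, exactly as in the model treatment of \cite{ppw}.
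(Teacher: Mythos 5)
Your strategy is the same as the paper's: derive the Pohozaev identity by multiplying the equations by $x\cdot\nabla u$ and $x\cdot\nabla v$, use criticality ($\alpha+\beta=2^*$) together with the equations themselves so the bulk terms cancel, conclude from strict starshapedness that $\partial_\nu u=\partial_\nu v=0$ on $\partial\Omega$, extend by zero, and finish with unique continuation; the extra regularity step (Brezis--Kato plus boundary elliptic regularity) that you insert to justify the integration by parts is a sensible supplement that the paper leaves implicit.

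One step, however, is not valid as written: you cannot get unique continuation by viewing each equation as $-\Delta u=g(x)$ with merely $g\in L^\infty$ and then ``invoking the standard unique continuation principle for the Laplacian''. That principle fails for an arbitrary bounded inhomogeneity --- any smooth compactly supported function solves such an equation and vanishes on an open set --- so you must keep the dependence of the right-hand side on $(u,v)$. The correct and easy fix is exactly the structure you already have available: since $u,v\in L^\infty_{\mathrm{loc}}$ and $\alpha,\beta>1$, one has pointwise
\begin{equation*}
|\Delta u|\leq C\bigl(|u|+|v|\bigr),\qquad |\Delta v|\leq C\bigl(|u|+|v|\bigr),
\end{equation*}
because $|u|^{2^*-1}\leq C|u|$, $|v|^{2^*-1}\leq C|v|$, $|u|^{\alpha-1}|v|^{\beta}\leq C|v|$ and $|u|^{\alpha}|v|^{\beta-1}\leq C|u|$ on bounded sets. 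Then unique continuation for this differential inequality applied to the pair $(u,v)$ (Aronszajn--Cordes type, or equivalently the linear system $-\Delta u=a_{11}u+a_{12}v$, $-\Delta v=a_{21}u+a_{22}v$ with bounded coefficients) yields $u\equiv v\equiv 0$ from the vanishing on an open set; this also disposes of the $L^{N/2}$ issue for the singular potential $|u|^{\alpha-2}|v|^{\beta}$ when $\alpha<2$, which was the right thing to worry about. This is how the paper's appeal to ``the unique continuation principle'' after extension by zero should be read, and with this correction your argument is complete.
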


\begin{proof}
Without loss of generality, we assume that $\Omega$ is strictly starshaped with respect to the origin. Let $(u,v)$ be a solution to the system \eqref{eq:system}. Multiplying the first equation in \eqref{eq:system} by $\nabla u \cdot x$ and the second one by $\nabla v \cdot x$ we get
\begin{align}
-\lambda\alpha |u|^{\alpha-2}|v|^{\beta}u(\nabla u \cdot x) &= (\Delta u + \mu_{1}|u|^{2^{*}-2}u)(\nabla u \cdot x), \label{eq:poho1} \\
-\lambda\beta |u|^{\alpha}|v|^{\beta-2}v(\nabla v \cdot x) &= (\Delta v + \mu_{2}|v|^{2^{*}-2}v)(\nabla v \cdot x). \label{eq:poho2}
\end{align}
Note that
$$\alpha |u|^{\alpha-2}|v|^{\beta}u(\nabla u \cdot x) + \beta |u|^{\alpha}|v|^{\beta-2}v(\nabla v \cdot x) = \mathrm{div}(|u|^\alpha |v|^\beta x) - N|u|^\alpha |v|^\beta .$$
So multiplying equations \eqref{eq:poho1} and \eqref{eq:poho2} by $\frac{1}{N}$, adding them up and integrating, we obtain the identity
\begin{align*}
\lambda\int_\Omega |u|^\alpha |v|^\beta =& \,\frac{1}{2^*}\int_\Omega (|\nabla u|^2 + |\nabla v|^2) - \frac{1}{2^*}\int_\Omega (\mu_1|u|^{2^*} + \mu_2|v|^{2^*})\\
& + \frac{1}{2N}\int_{\partial\Omega} \left(\left|\frac{\partial u}{\partial \nu}\right|^2 + \left|\frac{\partial v}{\partial \nu}\right|^2\right)(s \cdot \nu) \mathrm{d}s,
\end{align*}
where $\nu=\nu(s)$ is the outer unit normal to $\partial\Omega$ at $s$. As $(u,v)$ solves the system \eqref{eq:system}, this identity reduces to
$$\int_{\partial\Omega} \left(\left|\frac{\partial u}{\partial \nu}\right|^2 + \left|\frac{\partial v}{\partial \nu}\right|^2\right)(s \cdot \nu) \mathrm{d}s = 0.$$
Since $\Omega$ is strictly starshaped, this implies that $\frac{\partial u}{\partial \nu} = 0 = \frac{\partial v}{\partial \nu}$ on $\partial\Omega$ so, extending $u$ and $v$ by zero outside of $\Omega$, we obtain a solution to the system \eqref{eq:system} in the whole of $\mathbb{R}^N$ which vanishes in an open subset of $\mathbb{R}^N$. By the unique continuation principle, $u=0$ and $v=0$ in $\Omega$, as claimed.
\end{proof}

Propositions \ref{prop:nonmin} and \ref{prop:pohozhaev} showcase the lack of compactness of the functional $E$. Symmetries help restore compactness.

\section{Symmetries and compactness} \label{sec:compactness}

Let $G$ be a closed subgroup of $O(N)$. We will assume, from now on, that $\Omega$ is a $G$-invariant bounded smooth domain and we will look for $G$-invariant solutions to the system \eqref{eq:system}, i.e., solutions $(u,v)$ such that both components, $u$ and $v$, are $G$-invariant. Set 
$$\mathscr{D}(\Omega)^{G}:=\{(u,v)\in \mathscr{D}(\Omega):u\text{ and }v \text{ are }G\text{-invariant}\}.$$

The $G$-orbit $Gx$ of a point $x\in\mathbb{R}^{N}$ is $G$-homeomorphic to the homogeneous space $G/G_{x},$ where 
$$G_{x}:=\{g\in G:gx=x\}$$
is the isotropy group of $x.$ In particular, $\#Gx=|G/G_{x}|$, where $|G/K|$ denotes, as usual, the index of the subgroup $K$ in $G$. 

We will prove the following result.

\begin{theorem} \label{thm:struwe}
Let $\lambda\in\mathbb{R}$ and $((u_k,v_k))$ be a sequence such that
$$(u_k,v_k)\in \mathscr{D}(\Omega)^{G},\qquad E(u_k,v_k)\to c,\qquad \nabla E(u_k,v_k)\to 0.$$
Then, after passing to a subsequence, there exist a solution $(u,v)$ to the system \eqref{eq:system}, an integer $m\geq 0$ and, for each $j=1,...,m$, a closed subgroup $K_j$ of finite index in $G$, a sequence $(\xi_{j,k})$ in $\Omega$, a sequence $(\varepsilon_{j,k})$ in $(0,\infty)$, and a nontrivial (but possibly semitrivial) $K_j$-invariant solution $(\widetilde{u}_j, \widetilde{v}_j)$ to the limit system
\begin{equation} \label{eq:limitsystem}
\begin{cases}
-\Delta u=\mu_{1}|u|^{2^{*}-2}u+\lambda\alpha |u|^{\alpha-2}|v|^{\beta}u,\\
-\Delta v=\mu_{2}|v|^{2^{*}-2}v+\lambda\beta |u|^{\alpha}|v|^{\beta-2}v,\\
u,v\in D^{1,2}(\mathbb{R}^{N}),
\end{cases}
\end{equation}
with the following properties:
\begin{enumerate}
\item[(i)] $G_{\xi_{j,k}}=K_j$ for all $k\in\mathbb{N}$ and $j=1,...,m$.
\item[(ii)] $\varepsilon_{j,k}^{-1}\mathrm{dist}(\xi_{j,k},\partial\Omega)\to\infty$ and $\varepsilon_{j,k}^{-1}|g\xi_{j,k}-\tilde{g}\xi_{j,k}|\to\infty$ as $k\to\infty$, for any $g,\tilde{g}\in G$ with $g^{-1}\tilde{g}\notin K_j$ and each $j=1,...,m$.
\item[(iii)] $\lim\limits_{k\to\infty}\left\|(u_k,v_k) - (u,v) - \sum\limits_{j=1}^m \; \sum\limits_{[g]\in G/K_j} \, (\widetilde{u}_{j,k,g},\widetilde{v}_{j,k,g})\right\| = 0$, where
$$\widetilde{u}_{j,k,g}(x):=\varepsilon_{j,k}^{\frac{2-N}{2}}\widetilde{u}_j\left(\frac{g^{-1}x-\xi_{j,k}}{\varepsilon_{j,k}}\right),$$
$$\widetilde{v}_{j,k,g}(x):=\varepsilon_{j,k}^{\frac{2-N}{2}}\widetilde{v}_j\left(\frac{g^{-1}x-\xi_{j,k}}{\varepsilon_{j,k}}\right).$$
\item[(iv)] $c= E(u,v) + \sum\limits_{j=1}^m |G/K_j|E(\widetilde{u}_j,\widetilde{v}_j).$
\end{enumerate}
\end{theorem}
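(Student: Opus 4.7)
The plan is to adapt Struwe's profile decomposition to the system setting with symmetries. I begin with boundedness and the identification of a first weak limit. From $E(u_k,v_k)\to c$, $\nabla E(u_k,v_k)\to 0$ and the identity
$$E(u_k,v_k)-\frac{1}{2^*}E'(u_k,v_k)(u_k,v_k)=\frac{1}{N}\|(u_k,v_k)\|^2,$$
which holds because $\alpha+\beta=2^*$, the sequence $(u_k,v_k)$ is bounded in $\mathscr{D}(\Omega)^G$. A weakly convergent subsequence $(u_k,v_k)\rightharpoonup(u,v)$ stays in $\mathscr{D}(\Omega)^G$, and combining weak continuity with the Brezis-Lieb type identities derived in the Appendix for the interaction term $\int|u|^\alpha|v|^\beta$ shows that $(u,v)$ is a critical point of $E$, hence a solution of \eqref{eq:system}. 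Setting $(\eta_k,\zeta_k):=(u_k-u,v_k-v)$ and extending by zero to $\mathbb{R}^N$, the same Appendix identities (for both the Lagrangian and its derivative) show that $(\eta_k,\zeta_k)$ is a $G$-invariant Palais-Smale sequence at level $c-E(u,v)$ for the natural extension of $E$ to $\mathscr{D}(\mathbb{R}^N)$.

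If $(\eta_k,\zeta_k)\to 0$ strongly, we stop with $m=0$. Otherwise I extract a bubble. Applying Lions' concentration lemma to the $G$-invariant density $\mu_1|\eta_k|^{2^*}+\mu_2|\zeta_k|^{2^*}$ yields sequences $\xi_{1,k}\in\overline\Omega$ and $\varepsilon_{1,k}>0$ such that, setting $\widetilde\eta_k(x):=\varepsilon_{1,k}^{(N-2)/2}\eta_k(\varepsilon_{1,k}x+\xi_{1,k})$ and similarly for $\widetilde\zeta_k$, a subsequence of $(\widetilde\eta_k,\widetilde\zeta_k)$ converges weakly to a nontrivial pair $(\widetilde u_1,\widetilde v_1)$. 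The scaling invariance of the critical system (again from $\alpha+\beta=2^*$) ensures that $(\widetilde u_1,\widetilde v_1)$ solves the limit system \eqref{eq:limitsystem} on $\mathbb{R}^N$ or on a half-space; the half-space alternative is ruled out by a Pohozaev identity of the type used in Proposition \ref{prop:pohozhaev}, forcing $\varepsilon_{1,k}^{-1}\mathrm{dist}(\xi_{1,k},\partial\Omega)\to\infty$. Now I invoke the symmetry: because $(\eta_k,\zeta_k)$ is $G$-invariant, the same profile concentrates at each translate $g\xi_{1,k}$ with the same scale $\varepsilon_{1,k}$. After a further subsequence, the compact isotropy subgroups $G_{\xi_{1,k}}\leq G\subset O(N)$ stabilize to a closed subgroup $K_1$, and since each bubble in the $G$-orbit contributes the same positive energy $E(\widetilde u_1,\widetilde v_1)$, the energy bound $c$ forces $|G/K_1|<\infty$. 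Choosing $\varepsilon_{1,k}$ as the smallest concentration scale available and $\xi_{1,k}$ of maximal isotropy enforces (i) and (ii): if two orbit translates $g\xi_{1,k}$ and $\tilde g\xi_{1,k}$ with $g^{-1}\tilde g\notin K_1$ remained within bounded $\varepsilon_{1,k}$-distance, their profiles would merge into a single bubble at a strictly smaller scale with strictly larger isotropy, contradicting the choice. Subtracting the orbit sum $\sum_{[g]\in G/K_1}(\widetilde u_{1,k,g},\widetilde v_{1,k,g})$ from $(\eta_k,\zeta_k)$ and invoking the Appendix once more produces a new $G$-invariant Palais-Smale sequence at level $c-E(u,v)-|G/K_1|E(\widetilde u_1,\widetilde v_1)$.

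The extraction then iterates. Termination in finitely many steps follows because every nontrivial solution $(\widetilde u,\widetilde v)$ of \eqref{eq:limitsystem}, whether semitrivial or fully nontrivial, satisfies $E(\widetilde u,\widetilde v)=\frac{1}{N}\|(\widetilde u,\widetilde v)\|^2\geq c_0$ for some uniform $c_0>0$ (from the Sobolev inequality combined with the Nehari-type bounds of Proposition \ref{prop:nehari}), while the total extracted energy stays bounded by $c$. The resulting data $(K_j,\xi_{j,k},\varepsilon_{j,k},\widetilde u_j,\widetilde v_j)$ deliver conclusions (i)--(iv). The principal obstacle throughout is the interaction term $\int|u|^\alpha|v|^\beta$: because $\alpha-1$ and $\beta-1$ can be close to $0$, its decomposition under weak limits and under the combined translation-rescaling by $(\xi_{j,k},\varepsilon_{j,k})$ is considerably more delicate than the familiar $\int|u|^{2^*}$ case, and relies on the Brezis-Lieb variants proved in the Appendix. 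A secondary subtlety is making the bubble-selection scheme $G$-equivariant, i.e., choosing $\varepsilon_{j,k}$ and $\xi_{j,k}$ at each stage so that (i)--(ii) hold simultaneously; the minimal-scale / maximal-isotropy rule above is what accomplishes this.
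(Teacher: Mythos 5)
Your overall route is the one the paper follows (iterated Struwe-type decomposition: weak limit, rescaled orbit bubbles, Pohozaev-type exclusion of the half-space, the Appendix Brezis--Lieb lemmas for the mixed term, energy bookkeeping for termination), but there is a genuine gap exactly at the equivariant heart of the argument, namely how you obtain properties (i) and (ii) and the finiteness of $|G/K_1|$. Your claim that ``after a further subsequence, the isotropy subgroups $G_{\xi_{1,k}}$ stabilize to a closed subgroup $K_1$'' is false as stated: isotropy groups of distinct points are in general distinct (conjugate) subgroups and do not become equal along any subsequence --- e.g.\ for $G=O(2)$ acting on $\mathbb{R}^2$ the isotropy of a nonzero point is the reflection through its axis, different for every direction. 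Making $G_{\xi_k}$ \emph{exactly} constant requires repositioning the concentration centers within bounded rescaled distance of the orbit of the original points, and this is precisely the content of Lemma \ref{lem:cf} (quoted from \cite{cf}), which simultaneously delivers the dichotomy: either $|G/K|<\infty$ together with the separation $\varepsilon_k^{-1}|g\xi_k-\tilde g\xi_k|\to\infty$ for $g^{-1}\tilde g\notin K$, or $|G/K|=\infty$ with the same separation for a larger subgroup $K'$ of infinite index. Your ``minimal-scale / maximal-isotropy'' rule is only a heuristic: ``merging into a bubble at a strictly smaller scale with strictly larger isotropy'' is not an argument (a merged profile lives at a comparable or larger scale, the new center need not have strictly larger isotropy, and iterating such replacements needs a chain condition on closed subgroups of $G$). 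Moreover, your energy-counting proof that $|G/K_1|<\infty$ presupposes that infinitely many orbit translates carry $\varepsilon$-separated copies of the bubble; without the separation statement of Lemma \ref{lem:cf}(d) you cannot sum their contributions, so the logic is circular at this point.

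Two smaller glossed-over steps: nontriviality of the rescaled weak limit is not a free output of ``Lions' concentration lemma''; in the paper it is proved by fixing the concentration level $\delta<\tfrac12\widehat S^{N/2}$ with $\widehat S$ from Lemma \ref{lem:hatS} (whose definition through $\lambda^+$ is what makes the argument work for cooperative as well as competitive $\lambda$) and running the cut-off/H\"older argument in the proof of Lemma \ref{lem:struwe}. Likewise, the uniform lower bound $E(\widetilde u,\widetilde v)\ge c_0>0$ for the (possibly semitrivial) limit-system solutions should be derived from $\widehat S$ (testing the system with the solution itself gives $\|(\widetilde u,\widetilde v)\|^2\ge\widehat S^{N/2}$), not from Proposition \ref{prop:nehari}, which is stated only for $\lambda<0$ and for fully nontrivial pairs. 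With Lemma \ref{lem:cf} invoked (or proved) and these two points repaired, your scheme coincides with the paper's proof.
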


The rest of the section is devoted to the proof of this theorem. The following lemma will allow us to choose a $G$-orbit of concentration in a convenient way.

\begin{lemma} \label{lem:cf}
Given sequences $(\varepsilon_{k})$ in $(0,\infty)$ and $(\zeta_{k})$ in $\mathbb{R}^{N},$ there exist a sequence $(\xi_{k})$ in $\mathbb{R}^{N}$ and a closed subgroup $K$ of $G$ such that, after
passing to a subsequence, the following statements hold true:
\begin{itemize}
\item[(a)] The sequence $(\varepsilon_{k}^{-1}$\emph{dist}$(\xi_{k},G\zeta_{k}))$ is bounded.
\item[(b)] $G_{\xi_{k}}=K$ for all $k\in\mathbb{N}$.
\item[(c)] If $|G/K| <\infty$ then $\varepsilon_{k}^{-1}|g\xi_{k}-\tilde{g}\xi_{k}|\to\infty$ for any $g,\tilde{g}\in G$ with $g^{-1}\tilde{g}\notin K$.
\item[(d)] If $|G/K|=\infty$ then there is a closed subgroup $K'$ of $G$ such that $K\subset K'$,\, $|G/K'| =\infty$ and $\varepsilon_{k}^{-1}|g\xi_{k}-\tilde{g}\xi_{k}|\to\infty$ for any $g,\tilde{g}\in G$ with
$g^{-1}\tilde{g}\notin K'$.
\end{itemize}
\end{lemma}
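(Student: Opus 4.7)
The plan is to build $\xi_k$ by an inductive procedure that enlarges the candidate isotropy subgroup whenever a fixed element of $G$ translates $\xi_k^{(i)}$ by at most $O(\varepsilon_k)$ along a subsequence, stopping after finitely many steps because the $G$-action on $\mathbb{R}^N$ admits only finitely many orbit types. I first reduce to uniform orbit type: since $G$ is closed in the compact Lie group $O(N)$, it is itself a compact Lie group, and the slice theorem applied to $S^{N-1}$ (extended radially) produces finitely many conjugacy classes of isotropy subgroups for the linear $G$-action on $\mathbb{R}^N$. After a first subsequence extraction, and after replacing $\zeta_k$ by a suitable point of $G\zeta_k$ (which leaves $G\zeta_k$ and hence (a) unchanged), I may therefore assume $G_{\zeta_k}=K_0$ is one fixed closed subgroup for every $k$.

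Setting $\xi_k^{(0)}:=\zeta_k$ and $K^{(0)}:=K_0$, I iterate: at stage $i$, if some fixed $g_0\in G\setminus K^{(i)}$ satisfies $\varepsilon_k^{-1}|g_0\xi_k^{(i)}-\xi_k^{(i)}|=O(1)$ along a subsequence, I pass to that subsequence and set $\xi_k^{(i+1)}:=P_{V^L}\,\xi_k^{(i)}$, where $L:=\overline{\langle K^{(i)},g_0\rangle}\subset G$ and $V^L=V^{K^{(i)}}\cap V^{g_0}$. The key quantitative estimate is that $y:=\xi_k^{(i)}-\xi_k^{(i+1)}$ lies in $V^{K^{(i)}}\cap(V^L)^{\perp}$ and satisfies $(g_0-\mathrm{id})y=g_0\xi_k^{(i)}-\xi_k^{(i)}$ (since $\xi_k^{(i+1)}\in V^L\subset V^{g_0}$ is $g_0$-fixed); because the kernel of $g_0-\mathrm{id}:V^{K^{(i)}}\to\mathbb{R}^N$ is exactly $V^{K^{(i)}}\cap V^{g_0}=V^L$, the induced map on the orthogonal complement admits a bounded left inverse with norm $C_{K^{(i)},g_0}$ depending only on $K^{(i)}$ and $g_0$, yielding
\[
|\xi_k^{(i+1)}-\xi_k^{(i)}|\leq C_{K^{(i)},g_0}\,|g_0\xi_k^{(i)}-\xi_k^{(i)}|=O(\varepsilon_k).
\]
A further subsequence extraction stabilizes the enlarged stabilizer $G_{\xi_k^{(i+1)}}\supseteq L\supsetneq K^{(i)}$ to a fixed group $K^{(i+1)}$, and finiteness of orbit types forces termination at some $K:=K^{(i_\ast)}$ after finitely many iterations. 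The cumulative displacement $|\xi_k-\zeta_k|$ is bounded by the finite sum $\sum_j|\xi_k^{(j+1)}-\xi_k^{(j)}|=O(\varepsilon_k)$, so (a) is preserved, and (b) holds by construction.

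At termination, no fixed $g\in G\setminus K$ admits a subsequence on which $\varepsilon_k^{-1}|g\xi_k-\xi_k|$ is bounded — otherwise, a further projection would strictly enlarge $K$, contradicting maximality — so $\varepsilon_k^{-1}|g\xi_k-\xi_k|\to\infty$ for every fixed $g\in G\setminus K$. The isometry identity $|g\xi_k-\tilde g\xi_k|=|\xi_k-g^{-1}\tilde g\,\xi_k|$ then yields (c) at once when $|G/K|<\infty$; when $|G/K|=\infty$, the choice $K':=K$ trivially satisfies $K\subset K'$, $|G/K'|=\infty$, and the required divergence, establishing (d). I expect the main obstacle to be the projection estimate itself in the non-normalizing case $g_0\notin N_G(K^{(i)})$, where $g_0$ does not preserve $V^{K^{(i)}}$ and one cannot simply restrict $g_0$ to that subspace; the resolution sketched above bypasses this by viewing $g_0-\mathrm{id}$ as a map from $V^{K^{(i)}}$ into the ambient $\mathbb{R}^N$, identifying its kernel with $V^L$, and extracting the quantitative bound from finite-dimensional linear algebra. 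Everything else — the subsequence diagonalization, the orbit-type argument, and the verification of (a) and (b) — is then routine.
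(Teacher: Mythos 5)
The paper offers no proof of this lemma at all --- it simply cites Lemma 3.3 of \cite{cf} --- so your argument is necessarily an independent, self-contained route. Its core idea is sound: projecting $\xi_k^{(i)}$ onto the fixed-point space $V^L$ of $L=\overline{\langle K^{(i)},g_0\rangle}$ and controlling the displacement via the injectivity of $g_0-\mathrm{id}$ on $V^{K^{(i)}}\cap(V^L)^{\perp}$ is correct (the kernel identification $V^{K^{(i)}}\cap V^{g_0}=V^L$ and the uniform left-inverse bound are fine), and if carried through it even yields more than the statement asks, namely divergence of $\varepsilon_k^{-1}|g\xi_k-\xi_k|$ for \emph{every} fixed $g\notin K$, so that (d) would hold with $K'=K$.

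There is, however, one step that is wrong as written: at the inductive stage you assert that ``a further subsequence extraction stabilizes the enlarged stabilizer $G_{\xi_k^{(i+1)}}$ to a fixed group $K^{(i+1)}$.'' Finiteness of orbit types only lets you fix the \emph{conjugacy class} along a subsequence; within one class there are in general uncountably many distinct isotropy subgroups (for $G=SO(3)$ acting on $\mathbb{R}^3$ every line through the origin gives a different copy of $SO(2)$), so no subsequence extraction makes them equal. You must repeat the trick you used at stage $0$: replace $\xi_k^{(i+1)}$ by $h_k^{-1}\xi_k^{(i+1)}$ with suitable $h_k\in G$, which is harmless for (a) because $G\zeta_k$ is $G$-invariant. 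But after this conjugation $K^{(i+1)}$ contains only a conjugate of $L$, not $L$ itself, so the literal chain $K^{(0)}\subsetneq K^{(1)}\subsetneq\cdots$ disappears and termination must be argued on orbit types: one needs that the class of $K^{(i)}$ strictly precedes that of $K^{(i+1)}$ in the subconjugacy order, which rests on the fact that a compact Lie group is never conjugate to a proper closed subgroup of itself (equal dimension and equal number of components force equality). Note that this fact is tacitly needed even in your original formulation, since ``finitely many orbit types'' does not by itself bound a strictly increasing chain of subgroups unless distinct terms lie in distinct classes. With these repairs the argument is complete; the remaining steps --- the projection estimate, the accumulation of $O(\varepsilon_k)$ displacements over finitely many stages, and the passage from pointwise divergence to (c) and (d) via $|g\xi_k-\tilde g\xi_k|=|\xi_k-g^{-1}\tilde g\xi_k|$ --- check out.
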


\begin{proof}
See Lemma 3.3 in \cite{cf}.
\end{proof}

We will also need the following lemma.

\begin{lemma} \label{lem:hatS}
Set $\lambda^+:=\max\{\lambda,0\}$. Then,
$$\widehat{S}:=\inf_{\substack{(u,v)\in\mathscr{D}(\mathbb{R}^N)\\ (u,v)\neq (0,0)}}\frac{\|u\|^2 + \|v\|^2}{\left(\int_{\mathbb{R}^N}\left(\mu_1|u|^{2^*}+\mu_2|v|^{2^*} + 2^*\lambda^+ |u|^{\alpha}|v|^{\beta}\right)\right)^{2/2^*}}>0.$$
\end{lemma}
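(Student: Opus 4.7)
The plan is to bound the denominator of the quotient in terms of the numerator by combining two elementary estimates: Young's inequality for the cross term and the standard scalar Sobolev inequality applied componentwise. Since $\alpha+\beta=2^{*}$ and $\alpha,\beta>0$, Young's inequality yields the pointwise bound
$$|u(x)|^{\alpha}|v(x)|^{\beta}\leq \frac{\alpha}{2^{*}}|u(x)|^{2^{*}}+\frac{\beta}{2^{*}}|v(x)|^{2^{*}}.$$
Integrating and substituting into the denominator collects everything into pure $L^{2^{*}}$ norms:
$$\int_{\mathbb{R}^{N}}\bigl(\mu_{1}|u|^{2^{*}}+\mu_{2}|v|^{2^{*}}+2^{*}\lambda^{+}|u|^{\alpha}|v|^{\beta}\bigr)\leq C_{1}\int_{\mathbb{R}^{N}}|u|^{2^{*}}+C_{2}\int_{\mathbb{R}^{N}}|v|^{2^{*}},$$
with $C_{1}:=\mu_{1}+\lambda^{+}\alpha$ and $C_{2}:=\mu_{2}+\lambda^{+}\beta$. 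Note that when $\lambda\leq 0$ the cross term is already absent, so the Young step is trivial and one simply takes $C_{1}=\mu_{1}$, $C_{2}=\mu_{2}$.

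Next, I would invoke the Sobolev inequality $\int_{\mathbb{R}^{N}}|w|^{2^{*}}\leq S^{-2^{*}/2}\|w\|^{2^{*}}$ for every $w\in D^{1,2}(\mathbb{R}^{N})$, which is merely a restatement of the definition of $S$. Applying it to $u$ and $v$ separately gives
$$C_{1}\int_{\mathbb{R}^{N}}|u|^{2^{*}}+C_{2}\int_{\mathbb{R}^{N}}|v|^{2^{*}}\leq C\,S^{-2^{*}/2}\bigl(\|u\|^{2^{*}}+\|v\|^{2^{*}}\bigr),$$
where $C:=\max\{C_{1},C_{2}\}$. Since $2^{*}/2>1$, the elementary superadditivity $a^{p}+b^{p}\leq(a+b)^{p}$ (valid for $a,b\geq 0$ and $p\geq 1$) upgrades the right-hand side to $C\,S^{-2^{*}/2}(\|u\|^{2}+\|v\|^{2})^{2^{*}/2}$. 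Raising to the power $2/2^{*}$ and dividing into the numerator then produces
$$\frac{\|u\|^{2}+\|v\|^{2}}{\bigl(\int_{\mathbb{R}^{N}}(\mu_{1}|u|^{2^{*}}+\mu_{2}|v|^{2^{*}}+2^{*}\lambda^{+}|u|^{\alpha}|v|^{\beta})\bigr)^{2/2^{*}}}\geq\frac{S}{C^{2/2^{*}}}>0,$$
and taking the infimum over all nontrivial $(u,v)\in\mathscr{D}(\mathbb{R}^{N})$ yields $\widehat{S}\geq S\,C^{-2/2^{*}}>0$.

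No real obstacle is expected; the argument is essentially a vector-valued version of the Sobolev inequality and avoids concentration-compactness since one only wants positivity of $\widehat{S}$, not attainment. The only nuance worth flagging is the superadditivity step used to recombine $\|u\|^{2^{*}}+\|v\|^{2^{*}}$ into a single product-space norm, which is precisely where the scaling invariance of the quotient under $(u,v)\mapsto(tu,tv)$ becomes visible. Attainment of $\widehat{S}$ would be a much more delicate issue, but it is not claimed in the statement and is not needed in what follows.
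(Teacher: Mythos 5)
Your proof is correct, and it takes a genuinely more direct route than the paper. The paper works with a minimizing sequence $((u_k,v_k))$, normalizes via the ratio $t_k$ defined by $|v_k|_{2^*}=t_k|u_k|_{2^*}$, controls the coupling term by H\"older's inequality (using $|w_k|_{2^*}=|u_k|_{2^*}$), and reduces everything to the one-parameter bound $\min_{t\geq0}\frac{1+t^2}{(1+t^{2^*}+t^\beta)^{2/2^*}}\geq 2^{-2/2^*}$, which is itself proved with the same subadditivity fact you invoke (that $s\mapsto s^{2/2^*}$ is subadditive, equivalently $a^p+b^p\leq(a+b)^p$ for $p=2^*/2\geq1$). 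You instead absorb the coupling term pointwise via Young's inequality, $|u|^{\alpha}|v|^{\beta}\leq\frac{\alpha}{2^*}|u|^{2^*}+\frac{\beta}{2^*}|v|^{2^*}$, apply the scalar Sobolev inequality to each component, and recombine with superadditivity; this avoids minimizing sequences and the normalization bookkeeping (including the paper's case distinction $t_k=0$), applies uniformly to every nontrivial $(u,v)$, and yields the explicit constant $\widehat{S}\geq S\,\bigl(\max\{\mu_1+\lambda^+\alpha,\;\mu_2+\lambda^+\beta\}\bigr)^{-2/2^*}$, whereas the paper's route gives a bound of the form $\frac{1}{\bar\mu}2^{-2/2^*}S$. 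Both arguments rest ultimately on the scalar Sobolev constant $S$ and neither claims attainment, so nothing is lost; the only (trivial) point worth stating explicitly in your write-up is that the denominator is strictly positive for $(u,v)\neq(0,0)$ because $\mu_1,\mu_2>0$, so the quotient is well defined before you bound it from below.
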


\begin{proof}
Let $((u_k,v_k))$ be a minimizing sequence for $\widehat{S}$ in $\mathscr{D}(\mathbb{R}^N)\smallsetminus \{(0,0)\}$. Without loss of generality, we may assume that $u_k\neq 0$ for all $k$. Let $t_k\in[0,\infty)$ be such that $|v_k|_{2^*} = t_k|u_k|_{2^*}$, where $|\cdot|_{2^*}$ is the norm in $L^{2^*}(\mathbb{R}^{N})$. 

Set $w_k:=u_k$ if $t_k=0$ and $w_k:=t_k^{-1}v_k$ if $t_k\neq 0$. Then, $w_k\in D^{1,2}(\mathbb{R}^{N})\smallsetminus\{0\}$ and $|w_k|_{2^*} = |u_k|_{2^*}$. Set $\bar{\mu}:=(\max\{\mu_1,\mu_2,2^*\lambda\})^{2/2^*}$
\begin{align*}
\widehat{S} + o(1) &\geq \frac{\|u_k\|^2 + \|v_k\|^2}{\bar{\mu}\left(\int_{\mathbb{R}^N}\left(|u_k|^{2^*}+|v_k|^{2^*} + |u_k|^{\alpha}|v_k|^{\beta}\right)\right)^{2/2^*}} \\
&= \frac{\|u_k\|^2}{\bar{\mu}\left(\int_{\mathbb{R}^N}\left(|u_k|^{2^*} +t_k^{2^*}|u_k|^{2^*} + t_k^\beta |u_k|^{\alpha}|w_k|^{\beta}\right)\right)^{2/2^*}} \\
&\qquad +\frac{t_k^2\|w_k\|^2}{\bar{\mu}\left(\int_{\mathbb{R}^N}\left(|w_k|^{2^*}+ t_k^{2^*}|w_k|^{2^*} + t_k^\beta |u_k|^{\alpha}|w_k|^{\beta}\right)\right)^{2/2^*}} \\
&\geq \frac{\|u_k\|^2}{\bar{\mu}(1+t_k^{2^*}+t_k^\beta)^{2/2^*}|u_k|_{2^*}^2} + \frac{t_k^2\|w_k\|^2}{\bar{\mu}(1+t_k^{2^*}+t_k^\beta)^{2/2^*}|w_k|_{2^*}^2} \\
&\geq \frac{1}{\bar{\mu}}\min_{t\geq 0}\frac{1+t^2}{(1+t^{2^*}+t^\beta)^{2/2^*}}S,
\end{align*}
where $S$ is the best constant for the embedding $D^{1,2}(\mathbb{R}^{N})\hookrightarrow L^{2^{*}}(\mathbb{R}^{N})$. Note that $t^\beta \leq 1+t^{2^*}$. Therefore,
$$\frac{1+t^2}{(1+t^{2^*}+t^\beta)^{2/2^*}} \geq \frac{1+t^2}{2^{2/2^*}(1+t^{2^*})^{2/2^*}} \geq \frac{1}{2^{2/2^*}},$$
for every $t\geq 0$. This completes the proof.
\end{proof}

The main step in the proof of Theorem \ref{thm:struwe} is given by the following result.

\begin{lemma} \label{lem:struwe} 
Let $\lambda\in\mathbb{R}$ and $((u_k,v_k))$ be sequence such that
$$(u_k,v_k)\in \mathscr{D}(\Omega)^{G},\qquad E(u_k,v_k)\to c,\qquad \nabla E(u_k,v_k)\to 0.$$
Assume further that $(u_k,v_k)\rightharpoonup (0,0)$ weakly in $\mathscr{D}(\Omega)$ but $(u_k,v_k)\not\to (0,0)$ strongly in $\mathscr{D}(\Omega)$. Then, after passing to a subsequence, there exist a closed subgroup $K$ of finite index in $G$, a sequence $(\xi_k)$ in $\Omega$, a sequence $(\varepsilon_k)$ in $(0,\infty)$, a nontrivial (but possibly semitrivial) $K$-invariant solution $(\widetilde{u}, \widetilde{v})$ to the limit system \eqref{eq:limitsystem}, and a sequence $((w_k,z_k))$ with the following properties:
\begin{enumerate}
\item[(i)] $G_{\xi_k}=K$ for all $k\in\mathbb{N}$.
\item[(ii)] $\varepsilon_k^{-1}\emph{dist}(\xi_k,\partial\Omega)\to\infty$ and $\varepsilon_{k}^{-1}|g\xi_{k}-\tilde{g}\xi_{k}|\to\infty$ for any $g,\tilde{g}\in G$ with $g^{-1}\tilde{g}\notin K$.
\item[(iii)] $\lim\limits_{k\to\infty}\left\|(u_k,v_k) - (w_k,z_k) - \sum\limits_{[g]\in G/K} \, (\widetilde{u}_{k,g},\widetilde{v}_{k,g})\right\| = 0$, where
$$\widetilde{u}_{k,g}(x):=\varepsilon_{k}^{\frac{2-N}{2}}\widetilde{u}\left(\frac{g^{-1}x-\xi_{k}}{\varepsilon_{k}}\right),\qquad\widetilde{v}_{k,g}(x):=\varepsilon_{k}^{\frac{2-N}{2}}\widetilde{v}\left(\frac{g^{-1}x-\xi_{k}}{\varepsilon_{k}}\right).$$
\item[(iv)] $c=\lim\limits_{k\to\infty}E(w_k,z_k) + |G/K|\,E(\widetilde{u},\widetilde{v})$.
\item[(v)] $(w_k,z_k)\in \mathscr{D}(\Omega)^{G}$,\; $(w_k,z_k)\rightharpoonup (0,0)$ weakly in $\mathscr{D}(\Omega)$,\; $E(w_k,z_k)\to c$ and $\nabla E(w_k,z_k)\to 0$.
\end{enumerate}
\end{lemma}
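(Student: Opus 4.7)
The approach is a Struwe-type bubble decomposition, refined in two ways to fit the symmetric system setting: the concentration center is chosen so that its isotropy group in $G$ is the same for every $k$ (via Lemma \ref{lem:cf}), and the subtracted profile is the full $G$-orbit of the bubble, so that $G$-invariance is preserved step by step. From $\nabla E(u_k,v_k)\to 0$ and $(u_k,v_k)\rightharpoonup 0$ one has
\[\|(u_k,v_k)\|^2=\int_\Omega\big(\mu_1|u_k|^{2^*}+\mu_2|v_k|^{2^*}+2^*\lambda|u_k|^{\alpha}|v_k|^{\beta}\big)+o(1),\]
and the assumption $(u_k,v_k)\not\to 0$, together with Lemma \ref{lem:hatS} (or, in the competitive case, the Nehari bound of Proposition \ref{prop:nehari}), forces this quantity to stay bounded below by some $\delta_0>0$. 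I would fix a small $\eta\in(0,\delta_0)$, below a threshold determined by $\widehat S$, and pick $\varepsilon_k>0$ and $\zeta_k\in\mathbb{R}^N$ realizing the L\'evy-type concentration function
\[\int_{B(\zeta_k,\varepsilon_k)}(\mu_1|u_k|^{2^*}+\mu_2|v_k|^{2^*})=\eta=\sup_{y\in\mathbb{R}^N}\int_{B(y,\varepsilon_k)}(\mu_1|u_k|^{2^*}+\mu_2|v_k|^{2^*}).\]
Boundedness of $\|(u_k,v_k)\|$ forces $\varepsilon_k\to 0$.

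Applying Lemma \ref{lem:cf} to $(\varepsilon_k,\zeta_k)$ yields, after passing to a subsequence, a sequence $\xi_k$ and a closed subgroup $K\leq G$ with $G_{\xi_k}=K$ for every $k$ and $\varepsilon_k^{-1}\mathrm{dist}(\xi_k,G\zeta_k)$ bounded. The rescalings
\[\widetilde u_k(x):=\varepsilon_k^{(N-2)/2}u_k(\varepsilon_k x+\xi_k),\qquad \widetilde v_k(x):=\varepsilon_k^{(N-2)/2}v_k(\varepsilon_k x+\xi_k)\]
are bounded in $D^{1,2}(\mathbb{R}^N)$, so, up to extracting a subsequence, they converge weakly to $(\widetilde u,\widetilde v)$. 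This limit is $K$-invariant because $(u_k,v_k)$ is $G$-invariant and $G_{\xi_k}=K$, and testing $\nabla E$ against $\mathcal{C}_c^\infty$ functions shows that $(\widetilde u,\widetilde v)$ solves \eqref{eq:limitsystem} on the limit domain, which is either $\mathbb{R}^N$ or a half-space. If it were a half-space, Proposition \ref{prop:pohozhaev} would force $(\widetilde u,\widetilde v)=0$; a standard iterated concentration-compactness argument, as in the single-equation case, then produces a yet smaller scale of concentration, and, after passing to this finer scale, one may assume $\varepsilon_k^{-1}\mathrm{dist}(\xi_k,\partial\Omega)\to\infty$ and that the rescaled limit domain is $\mathbb{R}^N$. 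Choosing $\eta$ below the $\widehat S$-vanishing threshold then ensures $(\widetilde u,\widetilde v)\neq(0,0)$. Finally, $|G/K|$ must be finite, for otherwise Lemma \ref{lem:cf}(d) would yield infinitely many pairwise $\varepsilon_k$-separated points $g\xi_k$, giving infinitely many disjoint balls each carrying $L^{2^*}$-mass at least $\eta$, contradicting the uniform bound on $\|(u_k,v_k)\|$.

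Next, set $(w_k,z_k):=(u_k,v_k)-\sum_{[g]\in G/K}(\widetilde u_{k,g},\widetilde v_{k,g})$. The separation $\varepsilon_k^{-1}|g\xi_k-\tilde g\xi_k|\to\infty$ from Lemma \ref{lem:cf}(c) implies that the $|G/K|$ translated bubbles are asymptotically mutually orthogonal in $D^{1,2}$ and that all cross interaction integrals $\int|\widetilde u_{k,g}|^{\alpha}|\widetilde v_{k,\tilde g}|^{\beta}$ vanish in the limit. Combined with the Brezis-Lieb decomposition for the pure critical terms and with the Brezis-Lieb-type identities for $\int|u|^{\alpha}|v|^{\beta}$ provided by the Appendix, this yields the asymptotic additivity $\|(u_k,v_k)\|^2=\|(w_k,z_k)\|^2+|G/K|\|(\widetilde u,\widetilde v)\|^2+o(1)$, the energy decomposition in (iv), and $\nabla E(w_k,z_k)\to 0$. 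The sequence $(w_k,z_k)$ lies in $\mathscr{D}(\Omega)^G$ by construction (we subtracted a full $G$-orbit of bubbles), and $(w_k,z_k)\rightharpoonup 0$ follows from $(u_k,v_k)\rightharpoonup 0$ together with $\widetilde u_{k,g}\rightharpoonup 0$, which holds because $\varepsilon_k\to 0$.

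The main technical obstacle is the Brezis-Lieb-style splitting for the coupling term $\int|u|^{\alpha}|v|^{\beta}$: unlike a pure power, it is not a sum of decoupled terms, and expanding it across the decomposition $(u_k,v_k)=(w_k,z_k)+\sum_{[g]}(\widetilde u_{k,g},\widetilde v_{k,g})$ produces many cross products with fractional exponents $\alpha-1$ and $\beta-1$, all of which must be shown to vanish in the limit. This is exactly what the Appendix is designed to handle, and the plan uses those identities as a black box. A secondary delicate point is ruling out the half-space alternative in a way compatible with the symmetry constraint $G_{\xi_k}=K$: one cannot freely slide $\xi_k$, but the maximality built into the choice of $\zeta_k$, combined with Lemma \ref{lem:cf}(a) (which only prescribes that $\varepsilon_k^{-1}\mathrm{dist}(\xi_k,G\zeta_k)$ is bounded), gives enough flexibility to perform the standard sliding argument at the level of $\zeta_k$ and then transfer it to $\xi_k$.
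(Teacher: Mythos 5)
Your overall strategy is the same as the paper's (Lévy-type concentration at a level below the $\widehat S$ threshold, Lemma \ref{lem:cf} to fix the isotropy group of the centers, subtraction of the whole $G$-orbit of one bubble, Brezis--Lieb lemmas from the Appendix for the pure and mixed terms), but two steps as written do not work. First, defining $(w_k,z_k):=(u_k,v_k)-\sum_{[g]\in G/K}(\widetilde u_{k,g},\widetilde v_{k,g})$ without truncation does not give elements of $\mathscr{D}(\Omega)^G$: the rescaled entire profiles $\widetilde u_{k,g},\widetilde v_{k,g}$ do not vanish on $\partial\Omega$, so $w_k,z_k\notin D^{1,2}_0(\Omega)$, and property \emph{(v)} --- exactly the property that allows the lemma to be iterated in the proof of Theorem \ref{thm:struwe} --- fails for your sequence. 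Subtracting a full $G$-orbit gives $G$-invariance, as you say, but not membership in $D^{1,2}_0(\Omega)$. The paper fixes this by multiplying each translated bubble by a cut-off $\chi\bigl(r_k^{-1}(x-g_i\xi_k)\bigr)$ with $r_k:=\frac14\min\{\mathrm{dist}(\xi_k,\partial\Omega),\,|g_i\xi_k-g_j\xi_k|\}$; since $r_k/\varepsilon_k\to\infty$, the truncation error is $o(1)$ in norm, so \emph{(iii)} still holds with the untruncated bubbles.

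Second, your treatment of the boundary alternative is circular. The paper first proves $(\widetilde u,\widetilde v)\neq(0,0)$ by the Hölder/$\widehat S$ vanishing argument, which uses only the normalization $\sup_{y}\int_{B_1(y)}f(\widetilde u_k,\widetilde v_k)\le\delta\le\int_{B_{C_0}(0)}f(\widetilde u_k,\widetilde v_k)$ and is insensitive to whether the rescaled domains converge to $\mathbb{R}^N$ or to a half-space; only afterwards is Proposition \ref{prop:pohozhaev} applied on the half-space to rule out $d=\lim\varepsilon_k^{-1}\mathrm{dist}(\xi_k,\partial\Omega)<\infty$, because a nontrivial solution cannot exist there. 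Your version (``Pohozaev forces the bubble to be zero, then an iterated concentration argument produces a finer scale after which one may assume $\varepsilon_k^{-1}\mathrm{dist}(\xi_k,\partial\Omega)\to\infty$'') leaves precisely the needed statement unproved: with $\eta$ below the vanishing threshold the rescaled limit cannot be $(0,0)$ at the chosen scale, so the situation you propose to iterate from is already contradictory, and no argument is given that re-chosen centers and scales would still satisfy \emph{(i)}, \emph{(ii)} and the conclusions of Lemma \ref{lem:cf}. Two smaller points: ``boundedness of $\|(u_k,v_k)\|$ forces $\varepsilon_k\to0$'' is not true by itself --- in the paper $\varepsilon_k\to0$ follows from $(u_k,v_k)\rightharpoonup(0,0)$ combined with the nontriviality of $(\widetilde u,\widetilde v)$; and when $\lambda>0$ your concentration functional omits the coupling term $2^*\lambda|u|^{\alpha}|v|^{\beta}$, which the vanishing estimate must dominate (the paper includes it in $f$; your choice is repairable via Young's inequality, but this has to be said).
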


\begin{proof}
Passing to a subsequence, we may assume that $(u_k,v_k)\neq(0,0)$ for all $k$. Since
\begin{equation*}
\|(u_k,v_k)\|^{2}= N\left(E(u_k,v_k)-\frac{1}{2^{*}}E'(u_k,v_k)(u_k,v_k)\right) \leq Nc + 1 + \|(u_k,v_k)\|
\end{equation*}
for  $k$ large enough, the sequence $((u_k,v_k))$ is bounded in $\mathscr{D}(\Omega)$ and, thus, 
$\|(u_k,v_k)\|^{2}\to Nc$. As $(u_k,v_k)\not\to (0,0)$ strongly in $\mathscr{D}(\Omega)$, this implies that $c>0$. So we may fix $\delta$ with 
\begin{equation} \label{eq:delta}
0<2\delta<\min\{Nc,\, \widehat{S}^{\frac{N}{2}}\},
\end{equation}
where $\widehat{S}$ is as in Lemma \ref{lem:hatS}. Set 
$$f(s,t):=\mu_1|s|^{2^*}+\mu_2|t|^{2^*} + 2^*\max\{\lambda,0\}\, |s|^{\alpha}|t|^{\beta}.$$
Then, we have that
\begin{align*}
Nc+o(1)&=N\left(E(u_k,v_k)-\frac{1}{2}E'(u_k,v_k)(u_k,v_k)\right)\\
&=\int_{\Omega}\left(\mu_1|u_k|^{2^*}+\mu_2|v_k|^{2^*}\right) + 2^*\lambda \int_{\Omega} |u_k|^{\alpha}|v_k|^{\beta} \\
&\leq \int_{\Omega}f(u_k,v_k).
\end{align*}
As $\delta\in(0,\frac{Nc}{2})$, there exist bounded sequences $(\varepsilon_k)$ in $(0,\infty)$ and $(\zeta_k)$ in $\mathbb{R}^{N}$ such that, after
passing to a subsequence,
\begin{equation*}
\sup_{x\in\mathbb{R}^{N}}\int_{B_{\varepsilon_k}(x)}f(u_k,v_k) = \int_{B_{\varepsilon_k}(\zeta_k)}f(u_k,v_k) = \delta,
\end{equation*}
where $B_r(y):=\{x\in\mathbb{R}^N:|x-y|<r\}$. For these sequences we choose $K$ and $(\xi_k)$ as in Lemma \ref{lem:cf}. Then, $G_{\xi_k}=K$ and there exist $g_k\in G$ and a positive constant $C_{1}$ such that
$$\varepsilon_{k}^{-1}|\xi_{k}-g_k\zeta_{k}|=\varepsilon_{k}^{-1}\mathrm{dist}(\xi_{k},G\zeta_{k})<C_1$$
for all $k\in\mathbb{N}$. Therefore, $(\xi_k)$ is bounded and, since $u_k$ and $v_k$ are $G$-invariant, we get that
\begin{equation*}
\delta = \int_{B_{\varepsilon_k}(g\zeta_k)}f(u_k,v_k) \leq \int_{B_{C_0\varepsilon_k}(\xi_k)}f(u_k,v_k),
\end{equation*}
where $C_0:=C_1 +1$. This implies, in particular, that
\begin{equation} \label{eq:xi}
\mathrm{dist}(\xi_k,\bar{\Omega})<C_0\varepsilon_k.
\end{equation}

We claim that $|G/K|<\infty$. Otherwise, according to Lemma \ref{lem:cf}\emph{(d)}, for any $n\in\mathbb{N}$ there exist $g_1,...,g_n\in G$ such that $\varepsilon_k^{-1}|g_i\xi_k - g_j\xi_k|\to \infty$ for every $i\neq j$ and, hence, for $k$ large enough,
\begin{align*}
n\delta \leq \sum_{i=1}^n \int_{B_{C_0\varepsilon_k}(g_i\xi_k)}f(u_k,v_k)\leq \int_{\Omega}f(u_k,v_k)\leq \widehat{S}^{-1}(Nc+1),
\end{align*}
This is a contradiction.

Set $\Omega_k:=\{y\in\mathbb{R}^{N}:\varepsilon_k y+\xi_k\in\Omega\}$ and, for $y\in\Omega_k$, set
$$\widetilde{u}_k(y):=\varepsilon_k^{\frac{N-2}{2}}u_k(\varepsilon_k y+\xi_k), \qquad \widetilde{v}_k(y):=\varepsilon_k^{\frac{N-2}{2}}v_k(\varepsilon_k y+\xi_k).$$
Then, $(\widetilde{u}_k,\widetilde{v}_k)$ is bounded in $\mathscr{D}(\mathbb{R}^{N})$. So, passing to a subsequence, we get that $\widetilde{u}_k\rightharpoonup \widetilde{u}$ and $\widetilde{v}_k\rightharpoonup\widetilde{v}$ weakly in $D^{1,2}(\mathbb{R}^{N})$, $\widetilde{u}_k\rightarrow\widetilde{u}$ and $\widetilde{v}_k\rightarrow\widetilde{v}$ in $L_{\mathrm{loc}}^{2}(\mathbb{R}^{N})$, and $\widetilde{u}_k\rightarrow\widetilde{u}$ and $\widetilde{v}_k\rightarrow\widetilde{v}$ a.e. in $\mathbb{R}^{N}$. Moreover, for every $y\in\mathbb{R}^N$,
\begin{equation} \label{eq:sup}
\int_{B_{1}(y)}f(\widetilde{u}_k,\widetilde{v}_k) \leq \delta \leq \int_{B_{C_0}(0)}f(\widetilde{u}_k,\widetilde{v}_k).
\end{equation} 
Note also that, as $u_k$ and $v_k$ are $G$-invariant and $G_{\xi_k}=K$, we have that $\widetilde{u}_k$ and $\widetilde{u}_k$ are $K$-invariant. Hence, $\widetilde{u}$ and $\widetilde{v}$ are $K$-invariant.

For $\varphi\in \mathcal{C}_c^{\infty}(\mathbb{R}^N)$ we set $\varphi_k(x):=\varphi(\frac{x-\xi_k}{\varepsilon_k})$. Then, $(\varphi_k^2u_k)$ is bounded in $D^{1,2}_0(\Omega)$ and, hence,
\begin{align}
&\int_{\Omega_k}\nabla \widetilde{u}_k\cdot\nabla (\varphi^2 \widetilde{u}_k) - \mu_{1}\int_{\Omega_k}\varphi^2|\widetilde{u}_k|^{2^{*}} - \lambda\alpha\int_{\Omega_k}\varphi^2|\widetilde{u}_k|^{\alpha}|\widetilde{v}_k|^{\beta} \nonumber \\
&\qquad = \partial_{u}E(u_k,v_k)[\varphi_k^2u_k] = o(1), \label{eq:partial=0}
\end{align}
and similarly for $\widetilde{v}_k$.

In order to show that $(\widetilde{u},\widetilde{v})\neq (0,0)$ we argue by contradiction. Assume that $\widetilde{u}=0=\widetilde{v}$. Then, $\widetilde{u}_k\to 0$ and $\widetilde{v}_k\to 0$ in $L_{\mathrm{loc}}^{2}(\mathbb{R}^{N})$. Let $\varphi \in \mathcal{C}_{c}^{\infty}(B_{1}(y))$ with $y\in \mathbb{R}^{N}$. From \eqref{eq:partial=0} we get
\begin{align*}
\int_{\Omega _{k}}|\nabla(\varphi \widetilde{u}_{k})|^{2} &=\int_{\Omega _{k}}\nabla\widetilde{u}_{k}\cdot \nabla(\varphi^{2}\widetilde{u}_{k}) + \int_{\Omega _{k}}\widetilde{u}_{k}^{2}\,|\nabla \varphi|^{2} \\
&=\mu_{1}\int_{\Omega_k}\varphi^2|\widetilde{u}_k|^{2^{*}} + \lambda\alpha\int_{\Omega_k}\varphi^2|\widetilde{u}_k|^{\alpha}|\widetilde{v}_k|^{\beta} +o(1), 
\end{align*}
and a similar expression for $\widetilde{v}_k$. Adding both identities, and using Hölder's inequality and inequalities \eqref{eq:sup} and \eqref{eq:delta}, we obtain

\begin{align*}
\|(\varphi \widetilde{u}_{k},\varphi \widetilde{v}_{k})\|^{2} &\leq\int_{\mathbb{R}^N}\varphi^{2}f(\widetilde{u}_k,\widetilde{v}_k) = \int_{\mathbb{R}^N} f(\widetilde{u}_k,\widetilde{v}_k)^{\frac{2^{*}-2}{2^{*}}} f(\varphi \widetilde{u}_k,\varphi \widetilde{v}_k)^{\frac{2}{2^{*}}} \\
&\leq \left(\int_{B_{1}(y)}f(\widetilde{u}_k,\widetilde{v}_k)\right)^{\frac{2^{*}-2}{2^{*}}}\left(\int_{\mathbb{R}^N}f(\varphi \widetilde{u}_k,\varphi \widetilde{v}_k)\right)^{\frac{2}{2^{*}}}+o(1) \\
&\leq \delta^{\frac{2}{N}}\widehat{S}^{-1}\|(\varphi \widetilde{u}_{k},\varphi \widetilde{v}_{k})\|^{2} + o (1) \\
&< \left(\frac{1}{2}\right)^{\frac{2}{N}}\|(\varphi \widetilde{u}_{k},\varphi \widetilde{v}_{k})\|^{2}. 
\end{align*}
This implies that $\|(\varphi \widetilde{u}_{k},\varphi \widetilde{v}_{k})\|=o(1)$ and, hence, that $|\varphi\widetilde{u}_{k}|_{2^{*}}=o(1)$ and $|\varphi\widetilde{v}_{k}|_{2^{*}}=o(1)$ for every $\varphi \in \mathcal{C}_{c}^{\infty }(B_{1}(y))$ and every $y\in \mathbb{R}^{N}$. Therefore, $\widetilde{u}_{k}\to 0$ and $\widetilde{v}_{k}\to 0$ in $L_{\mathrm{loc}}^{2^{*}}(\mathbb{R}^{N})$, contradicting \eqref{eq:sup}. This proves that $(\widetilde{u},\widetilde{v})\neq (0,0)$.

Passing to a subsequence, we have that $\xi_k\to\xi$ and $\varepsilon_k\to\varepsilon\in[0,\infty)$. Moreover, $\varepsilon =0$, otherwise, as $u_k\rightharpoonup 0$  and $v_k\rightharpoonup 0$ weakly in $D_{0}^{1,2}(\Omega)$, we would get that $\widetilde{u}=0=\widetilde{v}$, which is a contradiction. Also, passing to a subsequence, 
\begin{equation*}
  \varepsilon_k^{-1} \mathrm{dist}(\xi_k,\partial\Omega) \to d \in [0,\infty] \quad \text{as } k \to \infty.
\end{equation*}
Next, we show that $d=\infty$.

Arguing by contradiction, assume that $d \in [0,\infty)$. Then, as $\varepsilon_k \to 0$, we have that $\xi \in \partial \Omega$. If a subsequence of $(\xi_k)$ is contained in $\bar \Omega$ we set $\bar d:=-d$, otherwise we set $\bar d:=d$. We define 
$$\mathbb{H} :=\{ y\in\mathbb{R}^N: y\cdot\nu > \bar d \},$$
where $\nu$ is the inner unit normal to $\partial \Omega$ at $\xi$. It is easy to see that, if $X$ is compact and $X \subset \mathbb{H}$, then $X\subset \Omega_k$ for $k$ large enough and, if  $X$ is compact and $X \subset \mathbb{R}^N \smallsetminus \bar{\mathbb{H}}$, then $X\subset \mathbb{R}^N \smallsetminus\Omega_k$ for $k$ large enough. As $\widetilde{u}_k \to\widetilde{u}$ and $\widetilde{v}_k \to\widetilde{v}$ a.e. in $\mathbb{R}^N$, this implies, in particular, that $\widetilde{u}=0=\widetilde{v}$ a.e. in $\mathbb{R}^N \smallsetminus \mathbb{H}$. So $(\widetilde{u},\widetilde{v})\in \mathscr{D}(\mathbb{H})$. Moreover, if $\varphi\in\mathcal{C}_c^{\infty}(\mathbb{H})$ then, as $\mathrm{supp}(\varphi)\subset\Omega_k$ for $k$ large enough, setting $\varphi_k(x):=\varepsilon_{k}^{\frac{2-N}{2}}\varphi(\frac{x-\xi_k}{\varepsilon_k})$, we have that $(\varphi_k)$ is a bounded sequence in $D^{1,2}_0(\Omega)$. Hence,
\begin{align*}
\partial_{u}E(\widetilde{u}_k,\widetilde{v}_k)\varphi&=\int_{\mathbb{H}}\nabla \widetilde{u}_k\cdot\nabla \varphi - \mu_{1}\int_{\mathbb{H}}|\widetilde{u}_k|^{2^{*}-2}\widetilde{u}_k\varphi - \lambda\alpha\int_{\mathbb{H}}|\widetilde{u}_k|^{\alpha -2}|\widetilde{v}_k|^{\beta}\widetilde{u}_k\varphi \\
&= \partial_{u}E(u_k,v_k)\varphi_k = o(1).
\end{align*}
Similarly, $\partial_{v}E(\widetilde{u}_k,\widetilde{v}_k)\varphi = o(1)$. Passing to the limit as $k\to\infty$, we conclude that $(\widetilde{u},\widetilde{v})$ solves the system \eqref{eq:system} in $\mathbb{H}$, contradicting Proposition \ref{prop:pohozhaev}.

This proves that $d=\infty$ and, by \eqref{eq:xi}, we have that $\xi_k \in\Omega$. Moreover, every compact subset $X$ of $\mathbb{R}^N$ is contained in $\Omega_k$ for $k$ large enough. So, arguing as above, we conclude that $(\widetilde{u},\widetilde{v})$ solves the limit system \eqref{eq:limitsystem} in $\mathbb{R}^N$.

Let $G/K=\{[g_1],...,[g_n]\}$. Set
$$r_k:=\frac{1}{4}\min\{\mathrm{dist}(\xi_k,\partial\Omega),\,|g_{i}(\xi_k)-g_{j}(\xi_k)|:i,j=1,...,n,\,i\neq j\}.$$
Choose a radially symmetric cut-off function $\chi\in\mathcal{C}_{c}^{\infty}(\mathbb{R}^{N})$ such that $0\leq\chi\leq1$, $\chi(x)=1$ if $|x| \leq1$ and $\chi(x)=0$ if $|x| \geq2$, and define
\begin{align*}
w_k(x)&:=u_k(x)-\sum_{i=1}^{n}\varepsilon_k^{\frac{2-N}{2}}\widetilde{u}\left(g_{i}^{-1}\left(\frac{x-g_{i}\xi_k}{\varepsilon_k}\right)  \right)\chi(r_k^{-1}(x-g_i\xi_k)),  \\
z_k(x)&:=v_k(x)-\sum_{i=1}^{n}\varepsilon_k^{\frac{2-N}{2}}\widetilde{v}\left(g_{i}^{-1}\left(\frac{x-g_{i}\xi_k}{\varepsilon_k}\right)  \right)\chi(r_k^{-1}(x-g_i\xi_k)).
\end{align*}
Since $\widetilde{u}$ and $\widetilde{v}$ are $K$-invariant and $G_{\xi_k}=K$ for all
$k\in\mathbb{N}$, we have that $w_k$ and $z_k$ are $G$-invariant. Clearly, $(w_k,z_k)\rightharpoonup (0,0)$ weakly in $\mathscr{D}(\Omega)$. Now, for each $j=1,...,n$, we define
\begin{align*}
w_k^{j}(x)&:=u_k(x)-\sum_{i=j}^{n}\varepsilon_k^{\frac{2-N}{2}}\widetilde{u}\left(g_{i}^{-1}\left(\frac{x-g_{i}\xi_k}{\varepsilon_k}\right)  \right)  \\
z_k^{j}(x)&:=v_k(x)-\sum_{i=j}^{n}\varepsilon_k^{\frac{2-N}{2}}\widetilde{v}\left(g_{i}^{-1}\left(\frac{x-g_{i}\xi_k}{\varepsilon_k}\right)  \right).
\end{align*}
As $r_k\varepsilon_k^{-1}\to\infty$, an easy computation shows that
\begin{equation} \label{eq:cutoff}
\left\|u_k-w_k-\sum_{i=1}^{n}\varepsilon_k^{\frac{2-N}{2}}\widetilde{u}\left(g_{i}^{-1}\left(\frac{\cdot-g_{i}\xi_k}{\varepsilon_k}\right)  \right)  \right\| =\left\|w_k^{1}-w_k\right\| \to 0,
\end{equation}
and similarly for $v_k$. This proves that $(w_k,z_k)$ satisfies \emph{(iii)}.

We rescale $w_k^{j}$ and use the $G$-invariance of $u_k$ to obtain
\begin{align*}
&\widetilde{w}_k^{j}(y):=\varepsilon_k^{\frac{N-2}{2}}w_k^{j}(\varepsilon_ky+g_{j}\xi_k)\\
&\quad =\varepsilon_k^{\frac{N-2}{2}}u_k(\varepsilon_ky+g_{j}\xi_k)-\sum_{i=j+1}^{n}\widetilde{u}\left(g_{i}^{-1}\left(\frac{\varepsilon_k y+ g_{j}\xi_k-g_{i}\xi_k}{\varepsilon_k}\right)  \right)  -\widetilde{u}(g_{j}^{-1}y)\\
&\quad =\widetilde{u}_k(g_{j}^{-1}y)-\sum_{i=j+1}^{n}\widetilde{u}\left(g_{i}^{-1}\left(y+\frac{g_{j}\xi_k-g_{i}\xi_k}{\varepsilon_k}\right)\right) - \widetilde{u}(g_{j}^{-1}y).
\end{align*}
Since $\widetilde{u}_k\rightharpoonup\widetilde{u}$ weakly in $D^{1,2}(\mathbb{R}^{N})$ and $\varepsilon_k^{-1}|g_{j}\xi_k-g_{i}\xi_k|\to\infty$ for every $i\neq j$, we have that
\begin{equation*}
\widetilde{u}_k\circ g_{j}^{-1}-\sum_{i=j+1}^{n}\widetilde{u}\left(g_{i}^{-1}\left(\cdot +\frac{g_{j}\xi_k-g_{i}\xi_k}{\varepsilon_k}\right)\right)  \rightharpoonup\widetilde{u}\circ g_{j}^{-1}\quad\text{weakly in }D^{1,2}(\mathbb{R}^{N}).
\end{equation*}
Therefore,
\begin{align}
\|\widetilde{w}_k^j\|^2 &= \left\|\widetilde{u}_k\circ g_{j}^{-1}-\sum_{i=j+1}^{n}(\widetilde{u}\circ g_{i}^{-1})\left(\cdot +\frac{g_{j}\xi_k-g_{i}\xi_k}{\varepsilon_k}\right)\right\|^2  - \|\widetilde{u}\circ g_{j}^{-1}\|^2 + o(1) \nonumber \\
&=\|\widetilde{w}_k^{j+1}\|^2 - \|\widetilde{u}\|^2 + o(1), \label{eq:iteration}
\end{align}
where the second equality is given by the change of variable $x=\varepsilon_k y + g_j\xi_k$. Iterating the identity \eqref{eq:iteration} and using \eqref{eq:cutoff} we obtain
\begin{equation} \label{eq:E0}
\|u_k\|^2 = \|w_k\|^2 + n\|\widetilde{u}\|^2 + o(1).
\end{equation}
Adding this last identity with the similar one for $v_k$ gives
\begin{equation} \label{eq:E1}
\|(u_k,v_k)\|^2 = \|(w_k,z_k)\|^2 + n\|(\widetilde{u},\widetilde{v})\|^2 + o(1).
\end{equation}
A similar argument, using Lemma \ref{lem:bl} yields
\begin{align}
\int_{\Omega}\left(\mu_1|u_k|^{2^*}+\mu_2|v_k|^{2^*}\right) = &\int_{\Omega}\left(\mu_1|w_k|^{2^*}+\mu_2|z_k|^{2^*}\right) \nonumber \\
&+ n\int_{\mathbb{R}^N}\left(\mu_1|\widetilde{u}|^{2^*}+\mu_2|\widetilde{v}|^{2^*}\right) + o(1), \label{eq:E2}
\end{align}
and
\begin{equation} \label{eq:E3}
\int_{\Omega} |u_k|^{\alpha}|v_k|^{\beta} = \int_{\Omega} |w_k|^{\alpha}|z_k|^{\beta} + n\int_{\mathbb{R}^N} |\widetilde{u}|^{\alpha}|\widetilde{v}|^{\beta}+o(1).
\end{equation}
From \eqref{eq:E1}, \eqref{eq:E2} and \eqref{eq:E3} we obtain
$$E(u_k,v_k) = E(w_k,z_k) + nE(\widetilde{u},\widetilde{v}) + o(1).$$
This proves \emph{(iv)}.

In a similar way, using Lemma \ref{lem:derivative}, we get that
$$o(1) = E'(u_k,v_k) = E'(w_k,z_k) + nE'(\widetilde{u},\widetilde{v}) + o(1) = E'(w_k,z_k) + o(1)$$
in $(\mathscr{D}(\mathbb{R}^N))'$. This completes the proof of \emph{(v)}. 

The proof of Lemma \ref{lem:struwe} is now complete.
\end{proof}

\begin{proof}[Proof of Theorem \ref{thm:struwe}]
The sequence $((u_k,v_k))$ is bounded in $\mathscr{D}(\Omega)$. So, after passing to a subsequence, we have that $u_k\rightharpoonup u$ and $v_k\rightharpoonup v$ weakly in $D^{1,2}(\Omega)$, $u_k\to u$ and $v_k\to v$ in $L_{\mathrm{loc}}^{2}(\Omega)$, and $u_k\to u$ and $v_k\to v$ a.e. in $\Omega$. A standard argument shows that $(u,v)$ solves the system \eqref{eq:system}. If $(u_k,v_k)\to(u,v)$ strongly in $\mathscr{D}(\Omega)$, the proof is finished. If not, we set $u_k^1:=u_k-u$ and $v_k^1:=v_k-v$. From Lemmas \ref{lem:bl} and \ref{lem:derivative} we obtain that
\begin{align*}
E(u_k,v_k) &= E(u_k^1,v_k^1) + E(u,v) + o(1), \\
E'(u_k,v_k) &= E'(u_k^1,v_k^1) + E'(u,v) + o(1) = E'(u_k^1,v_k^1)+ o(1)\quad\text{in }(\mathscr{D}(\Omega))'.
\end{align*}
As $(u_k^1,v_k^1)\not\to(0,0)$, Lemma \ref{lem:struwe} yields a closed subgroup $K_1$ of finite index in $G$, sequences $(\xi_{1,k})$ in $\Omega$ and $(\varepsilon_{1,k})$ in $(0,\infty)$, a nontrivial $K_1$-invariant solution $(\widetilde{u}_1, \widetilde{v}_1)$ to the limit system \eqref{eq:limitsystem}, and a sequence $((w_{1,k},z_{1,k}))$ which satisfy the statements \emph{(i)-(v)} of Lemma \ref{lem:struwe}. If $(w_{1,k},z_{1,k})\to (0,0)$, the proof is finished. If not, we apply the Lemma \ref{lem:struwe} again to the sequence $((u_k^2,v_k^2))$ with $(u_k^2,v_k^2):=(w_{1,k},z_{1,k})$. We continue this way until, after a finite number of steps, we reach a sequence $((u_k^m,v_k^m))$ that converges strongly to $(0,0)$ in $\mathscr{D}(\Omega)$. This completes the proof.
\end{proof}

\section{Cooperative systems} \label{sec:cooperative}

This section is devoted to the proof of Theorems \ref{thm:bc}, \ref{thm:bounded} and \ref{thm:entire} in the cooperative case.

\begin{lemma} \label{lem:synchronized}
Let $w$ be a nontrivial solution to the problem \eqref{eq:bc}. Then, there exist $s,t>0$ such that $(sw,tw)$ is a solution to the system \eqref{eq:system} if and only if there exists $r>0$ such that
\begin{equation} \label{eq:h}
h(r):=\mu_1 r^{2^*-2} + \lambda\alpha r^{\alpha -2}-\lambda\beta r^\alpha -\mu_2=0 \quad\text{and}\quad \mu_2+\lambda\beta r^\alpha>0.
\end{equation}
In particular, if $\lambda>0$ and assumption $(A)$ is satisfied, then there exist $s,t>0$ such that $(sw,tw)$ solves \eqref{eq:system}.
\end{lemma}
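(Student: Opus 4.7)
The plan is to substitute the ansatz $(u,v)=(sw,tw)$ with $s,t>0$ into the system \eqref{eq:system} and exploit the identity $-\Delta w=|w|^{2^*-2}w$ to reduce the PDEs to purely algebraic conditions on $(s,t)$. Since $s,t>0$ and $\alpha+\beta=2^*$, one has $|sw|^{2^*-2}(sw)=s^{2^*-1}|w|^{2^*-2}w$ and $|sw|^{\alpha-2}|tw|^{\beta}(sw)=s^{\alpha-1}t^{\beta}|w|^{2^*-2}w$, with the analogous identities for the second equation. Dividing out the nontrivial factor $|w|^{2^*-2}w$ and then the positive factors $s$ and $t$, the requirement that $(sw,tw)$ solve \eqref{eq:system} becomes equivalent to
\begin{align*}
1 &= \mu_1 s^{2^*-2}+\lambda\alpha\, s^{\alpha-2}t^{\beta},\\
1 &= \mu_2 t^{2^*-2}+\lambda\beta\, s^{\alpha}t^{\beta-2}.
\end{align*}

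Next I would introduce $r:=s/t$ and factor $t^{2^*-2}$ out of each equation, which is legitimate because $\alpha-2+\beta=\alpha+\beta-2=2^*-2$. The two equations then read
\[
t^{2^*-2}\bigl[\mu_1 r^{2^*-2}+\lambda\alpha\, r^{\alpha-2}\bigr]=1=t^{2^*-2}\bigl[\mu_2+\lambda\beta\, r^{\alpha}\bigr].
\]
Equating the bracketed expressions recovers precisely $h(r)=0$, while positivity of $t$ forces $\mu_2+\lambda\beta r^{\alpha}>0$. Conversely, given any $r>0$ satisfying \eqref{eq:h}, setting $t:=(\mu_2+\lambda\beta r^{\alpha})^{-1/(2^*-2)}>0$ and $s:=rt>0$ automatically solves the algebraic system, and hence produces a solution $(sw,tw)$ of \eqref{eq:system}.

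For the ``in particular'' statement, observe that when $\lambda>0$ the inequality $\mu_2+\lambda\beta r^{\alpha}>0$ is automatic for every $r>0$, so everything reduces to exhibiting a positive zero of $h$. Such a zero is granted directly by $(A)$ in the dimensional ranges listed there. The remaining cases are $N=5$ with $\alpha\in(4/3,2)$ and $N\geq 6$; in each of these, the constraints $\alpha,\beta>1$ and $\alpha+\beta=2^*$ force $2^*-2<\alpha<2$ (for $N\geq 6$ one has $\alpha<2^*-1=(N+2)/(N-2)\leq 2$ and $2^*-2=4/(N-2)\leq 1<\alpha$, and for $N=5$ both inequalities are part of the hypothesis). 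Consequently $h(r)\to+\infty$ as $r\to 0^+$ because $\lambda\alpha r^{\alpha-2}\to+\infty$, while $h(r)\to-\infty$ as $r\to\infty$ because $-\lambda\beta r^{\alpha}$ dominates $\mu_1 r^{2^*-2}$, so the intermediate value theorem supplies the desired root.

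The main obstacle I anticipate is precisely this last case analysis: checking carefully that the exponents \emph{not} covered by $(A)$ nevertheless satisfy $2^*-2<\alpha<2$, so that the sign of $h$ near $0$ and near $\infty$ is exactly what the intermediate value theorem needs. The algebraic reduction in the first two paragraphs is otherwise entirely routine.
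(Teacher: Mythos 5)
Your proposal is correct and follows essentially the same route as the paper: reduce the ansatz $(sw,tw)$ to the algebraic system $\mu_1 s^{2^*-2}+\lambda\alpha s^{\alpha-2}t^{\beta}=1=\mu_2 t^{2^*-2}+\lambda\beta s^{\alpha}t^{\beta-2}$, pass to $r=s/t$ with $t=(\mu_2+\lambda\beta r^{\alpha})^{-1/(2^*-2)}$ to get the equivalence with \eqref{eq:h}, and for $\lambda>0$ obtain a root of $h$ either from $(A)$ or from the sign change of $h$ when $\alpha\in(2^*-2,2)$ via the intermediate value theorem. Your explicit verification that the cases not covered by $(A)$ (namely $N\geq 6$, and $N=5$ with $\alpha\in(4/3,2)$) do satisfy $2^*-2<\alpha<2$ matches the paper's observation that $\alpha\in(1,2^*-1)\subset(2^*-2,2)$ when $N\geq 6$.
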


\begin{proof}
Let $w$ be a nontrivial solution to the problem \eqref{eq:bc}. Then, $(sw,tw)$ solves the system \eqref{eq:system} if and only if $(s,t)$ solves the system
\begin{equation} \label{eq:num_sys}
\begin{cases}
\mu_1 s^{2^*-2}+\lambda\alpha s^{\alpha-2}t^\beta = 1 \\
\mu_2 t^{2^*-2}+\lambda\beta s^\alpha t^{\beta -2} = 1.
\end{cases}
\end{equation}
It is easy to see that, if $r>0$ satisfies $h(r)=0$ and $\mu_2+\lambda\beta r^\alpha>0$, and if we set $t:= (\mu_2 + \lambda\beta r^{\alpha})^{\frac{-1}{2^*-2}}$, then $(rt,t)$ satisfies \eqref{eq:num_sys}. Conversely, if $(s,t)$ solves the system \eqref{eq:num_sys} and $s,t>0$, setting $r:=\frac{s}{t}$ we get that $h(r)=0$ and $\mu_2+\lambda\beta r^\alpha>0$. 

Let $\lambda>0$. If $\alpha\in (2^*-2,2)$, then 
$$\lim_{r\to 0^+}h(r)=\infty \qquad \text{ and } \qquad \lim_{r\to\infty}h(r) = -\infty.$$
Hence, there exists $r>0$ such that $h(r)=0$, and \eqref{eq:num_sys} holds true. As $\alpha\in (1, 2^*-1)$, we have that $\alpha\in (2^*-2,2)$ if $N\geq 6$. If $N\leq5$ and $\alpha\not\in (2^*-2,2)$, assumption $(A)$ yields the existence of $r>0$ with $h(r)=0$. This completes the proof.
\end{proof}

So, for $\lambda>0$, Theorems \ref{thm:bc}, \ref{thm:bounded} and \ref{thm:entire} follow immediately from the corresponding statements for the problem \eqref{eq:bc}.

\begin{proof}[Proof of Theorem \ref{thm:bc}]
Bahri and Coron showed in \cite[Theorem 1]{bc} that problem \eqref{eq:bc} has a positive solution if $\widetilde{H}_*(\Omega;\mathbb{Z}_2)\neq 0$. Now Lemma \ref{lem:synchronized} yields the result for the system \eqref{eq:system} with $\lambda>0$.
\end{proof}

\begin{proof}[Proof of Theorem \ref{thm:bounded} for $\lambda>0$.]
Let $\Gamma$ be the closed subgroup of $O(N)$ and $\Theta$ be the $\Gamma$-invariant bounded smooth domain in $\mathbb{R}^N$ given in the statement of Theorem \ref{thm:bounded}. It was shown in \cite[Theorem 1]{cf0} that, for any given $n\in\mathbb{N}$, there exists $\ell_n >0$, depending on $\Gamma$ and $\Theta$, such that, if $G$ is closed subgroup of $\Gamma$, $\Omega$ is a $G$-invariant bounded smooth domain in $\mathbb{R}^N$ which contains $\Theta$, and
$$\min_{x\in\bar{\Omega}} \#Gx > \ell_n,$$
then the problem \eqref{eq:bc} has at least $n$ pairs of nontrivial $G$-invariant solutions $\pm w_1,...,\pm w_n$ such that $w_1$ is positive and $w_2,...,w_n$ change sign. This result, together with Lemma \ref{lem:synchronized}, yields Theorem \ref{thm:bounded} for $\lambda>0$.
\end{proof}

\begin{proof}[Proof of Theorem \ref{thm:entire} for $\lambda>0$.]
W.Y. Ding showed in \cite{d} that the problem 
$$-\Delta w=|w|^{{2}^{*}-2}w,\qquad w\in D^{1,2}(\mathbb{R}^N),$$
has a sequence $(w_k)$ of solutions such that $\|w_k\|\to\infty$ as $k\to\infty$; see also \cite{c,dmpp}. 
This result, together with Lemma \ref{lem:synchronized}, yields Theorem \ref{thm:entire} for $\lambda>0$.
\end{proof}

\section{Competitive systems} \label{sec:competitive}

This section is devoted to the proof of Theorems \ref{thm:bounded} and \ref{thm:entire} in the competitive case. We will assume throughout that $\lambda<0$.

It was shown in \cite[Proposition 2.3]{cp} that there exists $\lambda_* <0$ such that the system \eqref{eq:system} does not have a fully nontrivial synchronized solution if $\lambda<\lambda_*$. So the argument given in the previous section for the cooperative case does not work in the competitive case. We will use a $\mathcal{C}^1$-Ljusternik-Schnirelmann theorem due to A. Szulkin, stated below.

Let $X$ be a real Banach space, $M$ be a closed $\mathcal{C}^1$-submanifold of $X$ and $\Phi\in\mathcal{C}^1(M,\mathbb{R})$. We denote by $d_x\Phi$ the differential of $\Phi$ at a point $x\in M$, and write
$$K_c: = \{x\in M:\Phi(x)=c\text{ and }d_x\Phi=0\}.$$
Recall that $\Phi$ is said to satisfy the $(PS)_c$-\emph{condition on} $M$ if every sequence $(x_k)$ in $M$ such that $\Phi(x_k)\to c$ and $\|d_x\Phi\|\to 0$ has a convergent subsequence.

Let $Z$ be a symmetric subset of $X$ with $0\not\in Z$. Recall that $Z$ is called \emph{symmetric} if $-Z=Z$. If $Z$ is nonempty, the \emph{genus of} $Z$ is the smallest integer $j\geq 1$ such that there exists an odd continuous function $Z\to\mathbb{S}^{j-1}$ into the unit sphere $\mathbb{S}^{j-1}$ in $\mathbb{R}^j$. We denote it by $\mathrm{genus}(Z)$. If no such $j$ exists, we set $\mathrm{genus}(Z):=\infty$. We define $\mathrm{genus}(\emptyset):=0$. The properties of the genus may be found in \cite[Proposition 2.3]{s}.

If $M$ is symmetric and $0\not\in M$, we define
$$c_j:=\inf_{Z\in\Sigma_j}\max_{x\in Z}\Phi(x),$$
where $\Sigma_j:=\{Z\subset M:Z\text{ is symmetric and compact, and }\mathrm{genus}(Z)\geq j\}$.

A closer look at the proof of \cite[Theorem 3.1]{s} allows to give a more detailed statement of \cite[Corollary 4.1]{s}, as follows.

\begin{theorem}[Szulkin \cite{s}] \label{thm:szulkin}
Let $M$ be a closed symmetric $\mathcal{C}^1$-submanifold of $X$ which does not contain the origin, and let $\Phi\in\mathcal{C}^1(M,\mathbb{R})$ be an even function which is bounded below. If $\Sigma_n\neq\emptyset$ for some $n\geq 1$ and $\Phi$ satisfies $(PS)_{c_j}$ for $j=1,...,n$, then, for each $j=1,...,n$, one has that $c_j$ is a critical value of $\Phi$ and
$$\mathrm{genus}(K_{c_j})\geq m+1 \qquad \text{if }\;c_j=\cdots=c_{j+m}\;\text{ for some }m\geq 0.$$
\end{theorem}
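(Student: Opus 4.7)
The plan is to apply the standard Ljusternik--Schnirelmann machinery of equivariant deformation plus genus theory, in the form developed by Szulkin for $\mathcal{C}^1$-submanifolds, and to track the output carefully enough to read off the stronger multiplicity estimate $\mathrm{genus}(K_{c_j})\geq m+1$ when $c_j=\cdots=c_{j+m}$. The whole argument is by contradiction, fed by a single deformation lemma on $M$.

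First I would establish the equivariant deformation lemma on $M$: for each $c\in\mathbb{R}$ at which $\Phi$ satisfies $(PS)_c$ and every symmetric open neighborhood $U$ of $K_c$ in $M$, there exist $\varepsilon_0>0$ and, for some $0<\varepsilon<\varepsilon_0$, a continuous odd map $\eta:M\to M$ with $\eta|_{\Phi^{c-\varepsilon_0}}=\mathrm{id}$ and $\eta(\Phi^{c+\varepsilon}\setminus U)\subset\Phi^{c-\varepsilon}$, where $\Phi^a:=\{x\in M:\Phi(x)\leq a\}$. The construction follows Szulkin: pick a pseudo-gradient vector field tangent to $M$, symmetrize it by averaging with its antipodal pullback (legitimate because $M$ and $\Phi$ are both even), cut it off near $\Phi^{-1}([c-\varepsilon_0,c+\varepsilon_0])\setminus U$, and integrate the resulting locally Lipschitz vector field for a short time. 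The $(PS)_c$ condition is used to ensure that the pseudo-gradient is uniformly bounded away from zero off $U$, so that the flow deepens $\Phi$ by at least $2\varepsilon$ on the prescribed sublevel set.

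Second, I would verify that each $c_j$ is a critical value. Suppose $K_{c_j}=\emptyset$. Applying the deformation lemma with $U=\emptyset$ produces an odd continuous $\eta$ with $\eta(\Phi^{c_j+\varepsilon})\subset\Phi^{c_j-\varepsilon}$. By definition of $c_j$, choose $Z\in\Sigma_j$ with $\max_Z\Phi<c_j+\varepsilon$. Since $\eta$ is odd and continuous, $\eta(Z)$ is symmetric and compact and $\mathrm{genus}(\eta(Z))\geq\mathrm{genus}(Z)\geq j$, hence $\eta(Z)\in\Sigma_j$ and $\max_{\eta(Z)}\Phi\leq c_j-\varepsilon$, contradicting the definition of $c_j$.

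Third, and this is the finer point that the authors wish to extract from Szulkin's argument, I would prove the multiplicity bound. Let $c:=c_j=\cdots=c_{j+m}$ and suppose, for contradiction, that $\mathrm{genus}(K_c)\leq m$. By $(PS)_c$, the set $K_c$ is compact, symmetric, and avoids the origin (as $0\notin M$), so by the continuity property of the genus there is a symmetric open neighborhood $U$ of $K_c$ in $M$ with $\mathrm{genus}(\overline{U})\leq m$. Apply the deformation lemma to this $U$, and choose $Z\in\Sigma_{j+m}$ with $\max_Z\Phi<c+\varepsilon$. By the monotonicity and subadditivity of the genus,
$$\mathrm{genus}\bigl(\overline{Z\setminus U}\bigr)\;\geq\;\mathrm{genus}(Z)-\mathrm{genus}(\overline{U})\;\geq\;(j+m)-m\;=\;j,$$
so $\eta(\overline{Z\setminus U})\in\Sigma_j$, yet $\Phi\leq c-\varepsilon$ on it, contradicting $c_j=c$. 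Setting $m=0$ recovers the statement that $c_j$ is critical, so the two conclusions are packaged together.

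The only real obstacle is the construction of the odd pseudo-gradient flow on the $\mathcal{C}^1$-submanifold $M$; this is precisely Szulkin's \cite[Theorem 3.1]{s}, and the present statement is extracted by reading his proof with the multiplicity estimate in mind rather than the mere count of critical points given in \cite[Corollary 4.1]{s}.
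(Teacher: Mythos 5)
Your proof is correct and is essentially the argument the paper relies on: the paper offers no proof of its own, citing instead a closer reading of Szulkin's deformation/genus argument in \cite{s}, and your reconstruction (odd pseudo-gradient flow on the $\mathcal{C}^1$-manifold, then the contradiction via monotonicity, subadditivity and continuity of the genus applied to $\overline{Z\setminus U}$) is precisely that argument, with the $m=0$ case recovering criticality of each $c_j$.
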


We shall use this theorem to prove Theorems \ref{thm:bounded} and \ref{thm:entire} for $\lambda<0$.

\subsection{Multiplicity in bounded domains}

Let $G$ be a closed subgroup of $O(N)$ and let $\Omega$ be a $G$-invariant bounded smooth domain in $\mathbb{R}^N$. Set
$$\mathcal{N}(\Omega)^G:=\mathcal{N}(\Omega) \cap \mathscr{D}(\Omega)^G.$$
It is easy to see that $\nabla E(u,v), \,\nabla F_1(u,v), \,\nabla F_2(u,v)\in\mathscr{D}(\Omega)^G$ for every $(u,v)\in\mathscr{D}(\Omega)^G$, where $F_1$ and $F_2$ are the functions defined in Proposition \ref{prop:nehari}; see, e.g., \cite[Theorem 1.28]{w}. Therefore, $\mathcal{N}(\Omega)^G$ is a closed $\mathcal{C}^1$-submanifold of $\mathscr{D}(\Omega)^G$ and a natural constraint for $E$.

The orthogonal projection of $\nabla E(u,v)$ onto the tangent space of $\mathcal{N}(\Omega)^G$ at the point $(u,v)$ will be denoted by $\nabla_{\mathcal{N}(\Omega)}E(u,v)$. 

\begin{lemma} \label{lem:PSonN}
If $((u_{k},v_{k}))$ is a sequence in $\mathcal{N}(\Omega)^G$ such that
$$E(u_{k},v_{k})\to c \qquad \text{and} \qquad\nabla_{\mathcal{N}(\Omega)}E(u_{k},v_{k})\to 0,$$
then $\nabla E(u_{k},v_{k})\to 0$.
\end{lemma}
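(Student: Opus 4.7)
My approach is the standard Lagrange-multiplier/natural-constraint argument, carried out quantitatively so that the multipliers vanish along the sequence. First, combining $E(u_k,v_k)-\frac{1}{2^*}(F_1+F_2)(u_k,v_k)=\frac{1}{N}\|(u_k,v_k)\|^2$ with $F_1(u_k,v_k)=F_2(u_k,v_k)=0$ gives $\|(u_k,v_k)\|^2=Nc+o(1)$, so the sequence is bounded in $\mathscr{D}(\Omega)$. Moreover, Proposition~\ref{prop:nehari}(a) yields the uniform lower bounds $\|u_k\|^2\geq\mu_1^{(2-N)/2}S^{N/2}$ and $\|v_k\|^2\geq\mu_2^{(2-N)/2}S^{N/2}$. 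Since $\nabla F_i(u_k,v_k)\in\mathscr{D}(\Omega)^G$ by $G$-invariance of the integrands defining $F_i$, there exist unique scalars $s_k,t_k\in\mathbb{R}$ with
\begin{equation*}
\nabla E(u_k,v_k)=\nabla_{\mathcal{N}(\Omega)}E(u_k,v_k)+s_k\nabla F_1(u_k,v_k)+t_k\nabla F_2(u_k,v_k),
\end{equation*}
and since $\|\nabla F_i(u_k,v_k)\|$ stays bounded, it suffices to show $s_k,t_k\to 0$.

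To isolate $s_k,t_k$ I would pair this identity with the $G$-invariant test pairs $(u_k,0)$ and $(0,v_k)$. Because $(u_k,v_k)\in\mathcal{N}(\Omega)$, one has $\langle\nabla E(u_k,v_k),(u_k,0)\rangle=F_1(u_k,v_k)=0$ and likewise $\langle\nabla E(u_k,v_k),(0,v_k)\rangle=0$, producing the $2\times 2$ linear system
\begin{equation*}
A_k\begin{pmatrix}s_k\\t_k\end{pmatrix}=-\begin{pmatrix}\langle\nabla_{\mathcal{N}(\Omega)}E(u_k,v_k),(u_k,0)\rangle\\\langle\nabla_{\mathcal{N}(\Omega)}E(u_k,v_k),(0,v_k)\rangle\end{pmatrix},
\end{equation*}
whose right-hand side is $o(1)$ by Cauchy--Schwarz and boundedness of $(u_k,v_k)$, with
\begin{equation*}
A_k=\begin{pmatrix}\partial_u F_1(u_k,v_k)[u_k] & \partial_u F_2(u_k,v_k)[u_k]\\\partial_v F_1(u_k,v_k)[v_k] & \partial_v F_2(u_k,v_k)[v_k]\end{pmatrix}.
\end{equation*}

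The crux is a clean formula for $\det A_k$. Using $F_i(u_k,v_k)=0$ to eliminate $\mu_i|\cdot|_{2^*}^{2^*}$, the entries simplify to $(A_k)_{11}=-(2^*-2)\mu_1|u_k|_{2^*}^{2^*}+\lambda\alpha(2-\alpha)I_k$, $(A_k)_{22}=-(2^*-2)\mu_2|v_k|_{2^*}^{2^*}+\lambda\beta(2-\beta)I_k$, and $(A_k)_{12}=(A_k)_{21}=-\lambda\alpha\beta I_k$, where $I_k:=\int_\Omega|u_k|^\alpha|v_k|^\beta\geq 0$. Substituting the Nehari identities $\mu_1|u_k|_{2^*}^{2^*}=\|u_k\|^2-\lambda\alpha I_k$ and $\mu_2|v_k|_{2^*}^{2^*}=\|v_k\|^2-\lambda\beta I_k$ into $\det A_k$ and using $\alpha+\beta=2^*$, the $I_k^2$ terms collapse (the relevant coefficient reduces to $\alpha\beta[2(2^*-2)-\tfrac{8}{N-2}]=0$), leaving the clean identity
\begin{equation*}
\det A_k=(2^*-2)\Big\{(2^*-2)\|u_k\|^2\|v_k\|^2-\lambda\alpha\beta I_k\big(\|u_k\|^2+\|v_k\|^2\big)\Big\}.
\end{equation*}
Since $\lambda<0$ and $I_k\geq 0$, both summands are nonnegative, and the first is bounded below by a positive constant. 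Hence $\det A_k$ is uniformly bounded below, $A_k^{-1}$ is uniformly bounded, $s_k,t_k\to 0$, and $\nabla E(u_k,v_k)\to 0$.

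The main obstacle worth flagging is precisely this determinant cancellation: without the critical-coupling identity $\alpha+\beta=2^*$ the $I_k^2$ coefficient would not vanish and could drive $\det A_k$ to zero or flip its sign as $I_k$ grows. This is the algebraic identity that upgrades the pointwise natural-constraint property of Proposition~\ref{prop:nehari}(c) into the Palais--Smale-compatible uniform version needed in the competitive regime.
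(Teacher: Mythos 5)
Your argument is correct, and it is essentially the proof the paper has in mind: the paper simply delegates to \cite[Lemma 3.5]{cp}, which is exactly this Lagrange-multiplier computation — pairing the decomposition of $\nabla E$ with $(u_k,0)$ and $(0,v_k)$, using the Nehari identities and Proposition \ref{prop:nehari}(a), and checking that for $\lambda<0$ the resulting $2\times 2$ determinant stays uniformly positive (your cancellation of the $I_k^2$ terms and the final formula for $\det A_k$ check out). No gaps worth flagging.
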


\begin{proof}
The same argument used in \cite[Lemma 3.5]{cp} yields this statement.
\end{proof}

Lemma \ref{lem:PSonN} allows us to apply Theorem \ref{thm:struwe} to obtain the following compactness condition.

\begin{lemma} \label{lem:PS}
If $\min_{x\in\overline{\Omega}}\#Gx\geq2$, then the functional $E$ satisfies the $(PS)_{c}$-condition on $\mathcal{N}(\Omega)^G$ for every 
$$c\leq\left(1+\min_{x\in\overline{\Omega}}\#Gx\right)\frac{1}{N}\mu_{0}^\frac{2-N}{2}S^\frac{N}{2},$$
where $\mu_{0}:=\max\{\mu_{1},\mu_{2}\}$. In particular, if $\#Gx=\infty$ for every $x\in\overline{\Omega}$, then $E$ satisfies the $(PS)_{c}$-condition on $\mathcal{N}(\Omega)^G$ for every $c\in\mathbb{R}$.
\end{lemma}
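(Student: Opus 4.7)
The plan is to combine Lemma~\ref{lem:PSonN} with the profile decomposition of Theorem~\ref{thm:struwe}, and then rule out bubbling by a direct energy count that exploits both the symmetry-induced multiplicity $|G/K_j|\geq\ell:=\min_{x\in\overline{\Omega}}\#Gx$ and the non-existence of Aubin--Talenti extremals in $D_0^{1,2}(\Omega)$. Let $((u_k,v_k))$ be a sequence in $\mathcal{N}(\Omega)^G$ with $E(u_k,v_k)\to c$ and $\nabla_{\mathcal{N}(\Omega)}E(u_k,v_k)\to 0$. By Lemma~\ref{lem:PSonN}, $\nabla E(u_k,v_k)\to 0$, so Theorem~\ref{thm:struwe} produces a $G$-invariant solution $(u,v)$ of \eqref{eq:system}, an integer $m\geq 0$, subgroups $K_j\leq G$ of finite index with $|G/K_j|=\#G\xi_{j,k}\geq\ell$ (since $\xi_{j,k}\in\overline{\Omega}$), and nontrivial $K_j$-invariant solutions $(\widetilde u_j,\widetilde v_j)$ of \eqref{eq:limitsystem} such that $c=E(u,v)+\sum_{j=1}^{m}|G/K_j|E(\widetilde u_j,\widetilde v_j)$, and such that strong convergence $(u_k,v_k)\to(u,v)$ is equivalent to $m=0$. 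The ``in particular'' statement is immediate: if $\#Gx=\infty$ for every $x\in\overline{\Omega}$, then $|G/K_j|<\infty$ cannot hold, so $m=0$ for every $c\in\mathbb{R}$. I therefore focus on showing $m=0$ under the energy hypothesis.

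The key quantitative ingredient is a lower bound for the bubble energies: every nontrivial solution $(\phi,\psi)$ of the limit system \eqref{eq:limitsystem} with $\lambda<0$ satisfies $E(\phi,\psi)\geq\tfrac{1}{N}\mu_0^{(2-N)/2}S^{N/2}$, with strict inequality $E(\phi,\psi)>\tfrac{2}{N}\mu_0^{(2-N)/2}S^{N/2}$ whenever $(\phi,\psi)$ is fully nontrivial (the strict inequality following from the non-attainment in Proposition~\ref{prop:nonmin}). For semitrivial bubbles this is the standard Sobolev computation: $\|\phi\|^2=\mu_i|\phi|_{2^*}^{2^*}$ combined with $\|\phi\|^2\geq S|\phi|_{2^*}^2$ yields $\|\phi\|^2\geq\mu_i^{(2-N)/2}S^{N/2}\geq\mu_0^{(2-N)/2}S^{N/2}$, and $E(\phi,0)=\tfrac{1}{N}\|\phi\|^2$. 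Similarly, the weak limit $(u,v)\in\mathscr{D}(\Omega)^G$ on the bounded domain satisfies the analogous bounds, but now \emph{strictly}: any nonzero component $u\in D_0^{1,2}(\Omega)$ has $\|u\|^2>\mu_1^{(2-N)/2}S^{N/2}$, because the Aubin--Talenti extremal is not in $D_0^{1,2}(\Omega)$.

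I would then argue by contradiction, assuming $m\geq 1$, in three cases. \emph{Case 1:} $(u,v)$ is fully nontrivial. Then $E(u,v)>\tfrac{2}{N}\mu_0^{(2-N)/2}S^{N/2}$ and any bubble contributes $\geq\tfrac{\ell}{N}\mu_0^{(2-N)/2}S^{N/2}$, giving $c>\tfrac{2+\ell}{N}\mu_0^{(2-N)/2}S^{N/2}$. \emph{Case 2:} $(u,v)$ is semitrivial, say $u=0$, $v\neq 0$. The Nehari bound $\|u_k\|^2\geq\mu_1^{(2-N)/2}S^{N/2}$ together with the norm decomposition $\|u_k\|^2=\|u\|^2+\sum_j|G/K_j|\|\widetilde u_j\|^2+o(1)$, implicit in Theorem~\ref{thm:struwe}(iii), forces some bubble with $\widetilde u_{j_1}\neq 0$, which contributes $\geq\tfrac{\ell}{N}\mu_0^{(2-N)/2}S^{N/2}$; combined with $E(0,v)>\tfrac{1}{N}\mu_0^{(2-N)/2}S^{N/2}$ this yields $c>\tfrac{1+\ell}{N}\mu_0^{(2-N)/2}S^{N/2}$. \emph{Case 3:} $(u,v)=(0,0)$. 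Applying the same norm-decomposition argument to both components forces either a single fully nontrivial bubble (contributing $>\tfrac{2\ell}{N}\mu_0^{(2-N)/2}S^{N/2}$) or two distinct bubbles, one with $\widetilde u\neq 0$ and one with $\widetilde v\neq 0$ (contributing $\geq\tfrac{2\ell}{N}\mu_0^{(2-N)/2}S^{N/2}$ jointly); since $\ell\geq 2$ implies $2\ell>1+\ell$, in either subcase $c>\tfrac{1+\ell}{N}\mu_0^{(2-N)/2}S^{N/2}$. All three cases contradict the hypothesis on $c$, forcing $m=0$.

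The main obstacle is Case~2, which is tight: a purely non-strict energy count would only yield $c\geq\tfrac{1+\ell}{N}\mu_0^{(2-N)/2}S^{N/2}$, and the contradiction hinges on the \emph{strict} Sobolev inequality $\|v\|^2>\mu_2^{(2-N)/2}S^{N/2}$ for $v\in D_0^{1,2}(\Omega)\smallsetminus\{0\}$ on a bounded domain. A secondary technical point is the component-wise norm decomposition of $\|u_k\|^2$ and $\|v_k\|^2$, which is not stated explicitly in Theorem~\ref{thm:struwe}(iii) but follows from the cut-off construction in the proof of Lemma~\ref{lem:struwe} applied to each component, essentially a Brezis--Lieb expansion (cf.~Lemma~\ref{lem:bl}).
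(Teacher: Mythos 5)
Your proof is correct and follows essentially the same route as the paper: Lemma \ref{lem:PSonN} to pass to a genuine Palais-Smale sequence, Theorem \ref{thm:struwe} for the profile decomposition, and a contradiction by energy counting based on Proposition \ref{prop:nonmin}, the Nehari/Sobolev lower bounds, and the weight $|G/K_j|\geq\min_{x\in\overline{\Omega}}\#Gx$ on each bubble. The only difference is bookkeeping: you organize the cases according to the weak limit $(u,v)$, making explicit the componentwise norm splitting and the strict Sobolev inequality on the bounded domain, whereas the paper cases on the bubble types and transfers the Nehari bound to the strong limit $u_k\to u$ when all $\widetilde{u}_j=0$; these are the same ingredients in a slightly different order.
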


\begin{proof}
Let $((u_k,v_k))$ be a sequence such that
$$(u_k,v_k)\in \mathcal{N}(\Omega)^{G},\qquad E(u_k,v_k)\to c,\qquad \nabla_{\mathcal{N}(\Omega)}E(u_k,v_k)\to 0.$$
By Lemma \ref{lem:PSonN}, we have that $\nabla E(u_{k},v_{k})\to 0$.

Arguing by contradiction, assume that the number $m$ given by Theorem \ref{thm:struwe} is such that $m\geq 1$, and let $(\widetilde{u}_1,\widetilde{v}_1),...,(\widetilde{u}_m,\widetilde{v}_m)$ be the nontrivial solutions to the limit problem \eqref{eq:limitsystem} given by that theorem.

If $\widetilde{u}_j \neq 0$ and $\widetilde{v}_j \neq 0$ for some $j=1,...,m$, then, by Proposition \ref{prop:nonmin}, 
$$E(\widetilde{u}_j, \widetilde{v}_j) > \frac{1}{N}(\mu_{1}^\frac{2-N}{2}+\mu_{2}^\frac{2-N}{2})S^\frac{N}{2}.$$
If, on the other hand, $\widetilde{u}_j = 0$ and $\widetilde{v}_i = 0$ for some $i,j=1,...,m$, then, as $(\widetilde{u}_i,\widetilde{v}_i)\neq(0,0)\neq (\widetilde{u}_j,\widetilde{v}_j)$, we have that $i\neq j$ and
$$E(\widetilde{u}_i,\widetilde{v}_i) + E(\widetilde{u}_j, \widetilde{v}_j) \geq \frac{1}{N}(\mu_{1}^\frac{2-N}{2}+\mu_{2}^\frac{2-N}{2})S^\frac{N}{2}.$$
In both cases Theorem \ref{thm:struwe} yields 
$$c \geq 2\left(\min_{x\in\overline{\Omega}}\#Gx \right) \frac{1}{N}\mu_{0}^\frac{2-N}{2}S^\frac{N}{2},$$
contradicting our assumption. Finally, if $\widetilde{u}_1 = \cdots = \widetilde{u}_m = 0$, then, by Theorem \ref{thm:struwe}, $u_k\to u$ strongly in $D^{1,2}_0(\Omega)$, where $u$ is the first component of a solution $(u,v)$ to \eqref{eq:system}, and $\widetilde{v}_j \neq 0$ for all $j=1,...,m$. Hence,
\begin{align*}
c &\geq \frac{1}{N}(\|u\|^{2}+\|v\|^{2}) + m\left(\min_{x\in\overline{\Omega}}\#Gx \right) \frac{1}{N}\mu_{2}^\frac{2-N}{2}S^\frac{N}{2} \\
&> \left(1+\min_{x\in\overline{\Omega}}\#Gx \right) \frac{1}{N}\mu_{0}^\frac{2-N}{2}S^\frac{N}{2},
\end{align*}
which is, again, a contradiction. 

Therefore, $m=0$. This completes the proof.
\end{proof}

\begin{lemma} \label{lem:genus}
For every $j\geq 1$, the set
$$\Sigma_j^G(\Omega):=\{Z\subset \mathcal{N}(\Omega)^{G}:Z\text{ is symmetric and compact, and }\mathrm{genus}(Z)\geq j\}$$
is nonempty.
\end{lemma}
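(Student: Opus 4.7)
\noindent\emph{Plan.} The plan is to construct, for each $j\ge 1$, a compact symmetric subset of $\mathcal{N}(\Omega)^G$ of genus at least $j$, by radially projecting the product of unit spheres of a $j$-dimensional subspace of $D^{1,2}_0(\Omega)^G$ onto the Nehari manifold.

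First I would establish a \emph{Nehari fiber projection}: for $\lambda<0$ and any $(u,v)\in\mathscr{D}(\Omega)$ with $u\neq 0$ and $v\neq 0$, there exists a unique pair $(s_{u,v},t_{u,v})\in(0,\infty)^2$ such that $(s_{u,v}u,\,t_{u,v}v)\in\mathcal{N}(\Omega)$, and $(s_{u,v},t_{u,v})$ depends continuously on $(u,v)$. The defining system of equations reads
\begin{align*}
\|u\|^2 &= s^{2^*-2}\mu_1|u|_{2^*}^{2^*} + \lambda\alpha\, s^{\alpha-2}t^{\beta}\textstyle\int_\Omega|u|^\alpha|v|^\beta,\\
\|v\|^2 &= t^{2^*-2}\mu_2|v|_{2^*}^{2^*} + \lambda\beta\, s^{\alpha}t^{\beta-2}\textstyle\int_\Omega|u|^\alpha|v|^\beta,
\end{align*}
and since $\lambda<0$ both right-hand sides have the correct monotonicity and limit behaviour at $0$ and $\infty$; this is the analysis behind Proposition \ref{prop:nehari} and is carried out in detail in \cite{cp}. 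Because the system only depends on $u$ and $v$ through even functionals, uniqueness yields $(s_{-u,v},t_{-u,v})=(s_{u,-v},t_{u,-v})=(s_{u,v},t_{u,v})$, so the map $\pi(u,v):=(s_{u,v}\,u,\,t_{u,v}\,v)$ is continuous on $\{(u,v):u\neq 0\neq v\}$ and odd under the antipodal action, $\pi(-u,-v)=-\pi(u,v)$.

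Next I would fix a $j$-dimensional subspace $V\subset D^{1,2}_0(\Omega)^G$, which exists because $D^{1,2}_0(\Omega)^G$ is infinite-dimensional: averaging test functions in $\mathcal{C}_c^\infty(\Omega)$ supported in sufficiently small balls over the compact group $G$ produces nontrivial $G$-invariant functions, and one obtains arbitrarily many linearly independent ones by choosing disjoint $G$-orbits of support. Writing $S_V:=\{u\in V:\|u\|=1\}\cong S^{j-1}$, I would set
$$Z_j\,:=\,\pi(S_V\times S_V)\;\subset\;\mathcal{N}(\Omega)^G.$$
Continuity of $\pi$ makes $Z_j$ compact, and oddness of $\pi$ makes it symmetric. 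The diagonal $\Delta:=\{(u,u):u\in S_V\}$ is a closed symmetric subset of $S_V\times S_V$ which is $\mathbb{Z}/2$-homeomorphic to $S^{j-1}$ with the antipodal action, so $\mathrm{genus}(\Delta)=j$; monotonicity of the genus then yields $\mathrm{genus}(S_V\times S_V)\ge j$, and since $\pi\colon S_V\times S_V\to Z_j$ is a continuous odd surjection, $\mathrm{genus}(Z_j)\ge\mathrm{genus}(S_V\times S_V)\ge j$. Therefore $Z_j\in\Sigma_j^G(\Omega)$, which is what had to be shown.

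The main obstacle is the Nehari fiber projection in the first step: the $2\times 2$ coupled nonlinear system in $(s,t)$ must be shown to have a unique positive solution depending continuously on $(u,v)$. This is where the competitive hypothesis $\lambda<0$ is used decisively (for $\lambda>0$ one cannot expect a single $(s,t)$-slice of this type, which is why the arguments in Section \ref{sec:cooperative} rely instead on synchronized pairs). Once this projection is in place, the genus-theoretic part of the argument is purely formal.
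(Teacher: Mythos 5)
There is a genuine gap: your key step, the ``Nehari fiber projection'' asserting that for every $\lambda<0$ and \emph{every} pair $(u,v)$ with $u\neq0\neq v$ there is a (unique, continuously varying) $(s,t)\in(0,\infty)^2$ with $(su,tv)\in\mathcal{N}(\Omega)$, is false in the generality you need. Take $\mu_1=\mu_2$, $\alpha=\beta$, $v=u$ with $u\neq 0$. Writing the two constraint equations and subtracting them, one gets
\begin{equation*}
\mu_1\Bigl(\textstyle\int_\Omega|u|^{2^*}\Bigr)\bigl(s^{2^*-2}-t^{2^*-2}\bigr)
+(-\lambda\alpha)\Bigl(\textstyle\int_\Omega|u|^{2^*}\Bigr)s^{\alpha-2}t^{\alpha-2}\bigl(s^{2}-t^{2}\bigr)=0,
\end{equation*}
and since both terms have the sign of $s-t$, any solution must satisfy $s=t$; the remaining equation then reads $\|u\|^2=(\mu_1+\lambda\alpha)s^{2^*-2}\int_\Omega|u|^{2^*}$, which has no positive solution once $\lambda\leq-\mu_1/\alpha$. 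So for $\lambda$ sufficiently negative the diagonal pairs $(u,u)\in S_V\times S_V$ in your construction admit no componentwise projection onto $\mathcal{N}(\Omega)$, and $\pi$ is simply undefined there (this is consistent with, and essentially the same phenomenon as, the nonexistence of synchronized solutions for $\lambda<\lambda_*$ quoted from \cite{cp}). Even where the projection exists, your appeal to ``monotonicity'' of the coupled $2\times2$ system does not hold when $\alpha>2$ or $\beta>2$, so uniqueness and continuity would require a separate argument; the reference \cite{cp} proves that $\mathcal{N}$ is a natural constraint manifold, not a global fiber projection of this kind. The genus-theoretic part of your argument (diagonal sphere, monotonicity of the genus, odd surjection) would be fine if $\pi$ existed, but the projection is precisely the crux.

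The paper sidesteps this entirely: it chooses $2j$ pairwise disjoint open $G$-invariant sets and $G$-invariant functions $u_i\in\mathcal{C}_c^\infty(U_i)$, $v_i\in\mathcal{C}_c^\infty(U_{j+i})$, so that every pair occurring in the construction has $uv\equiv0$. Then the interaction term vanishes identically, the Nehari conditions decouple, and the projection $\varrho(u,v)=(s_uu,t_vv)$ is given by two \emph{independent single-equation} normalizations, which are trivially well defined, continuous and odd; mapping the boundary $Q$ of the convex hull of $\{\pm e_1,\dots,\pm e_j\}$ (a set of genus $j$) oddly into $\mathcal{N}(\Omega)^G$ via $\varrho$ produces the desired compact symmetric set. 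If you want to salvage your approach, you should replace $S_V\times S_V$ by a product of spheres in two subspaces spanned by functions with mutually disjoint supports, which is in effect the paper's construction.
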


\begin{proof}
Fix $j\geq 1$ and choose $2j$ pairwise disjoint open $G$-invariant subsets of $\Omega$ and nontrivial $G$-invariant functions $u_i\in\mathcal{C}_c^\infty(U_i)$ and $v_i\in\mathcal{C}_c^\infty(U_{j+i})$, $i=1,...,j$.

For $(u,v)\in \mathscr{D}(\Omega)$ with $u\neq 0$, $v\neq 0$, let $s_u$ and $t_v$ be the unique positive numbers such that $\|s_u u\|^2=\int_{\Omega}\mu_1 |s_u u|^{2^*}$ and $\|t_v v\|^2=\int_{\Omega}\mu_2 |t_v v|^{2^*}$, and set
$$\varrho(u,v):=(s_u u, t_v v).$$
Note that $\varrho(-u,-v)=-\varrho(u,v)$ and that $\varrho(u,v)\in\mathcal{N}(\Omega)$ if $uv=0$.

Let $\{e_1,...,e_j\}$ be the canonical basis of $\mathbb{R}^j$. The boundary of the convex hull of the set $\{\pm e_1,...,\pm e_j\}$, which is given by
$$Q:=\left\{\sum_{i=1}^j r_i\tilde{e}_i:\;\tilde{e}_i\in\{e_i,-e_i\},\;r_i\in [0,1],\;\sum_{i=1}^j r_i=1\right\},$$
has genus $j$ and the map $\psi:Q\to\mathcal{N}(\Omega)^{G}$, given by
$$\psi(e_i):=\varrho(u_i,v_i),\quad\psi(-e_i):=-\psi(e_i),\quad \psi\left(\sum_{i=1}^j r_i\tilde{e}_i\right):=\varrho \left(\sum_{i=1}^j r_i\psi(\tilde{e}_i)\right),$$
is odd and continuous. Hence, the set $Z:=\psi(Q)\subset\mathcal{N}(\Omega)^{G}$ is symmetric and compact, and $\mathrm{genus}(Z)\geq j$; see \cite[Proposition 2.3]{s}. This completes the proof.
\end{proof}

Set
$$c_j^G(\Omega):=\inf_{Z\in\Sigma_j^G(\Omega)}\,\max_{(u,v)\in Z}E(u,v).$$

\begin{proof}[Proof of Theorem \ref{thm:bounded} for $\lambda<0$.]
Let $\Gamma$ be the subgroup of $O(N)$ and $\Theta$ be the $\Gamma$-invariant bounded smooth domain given in the statement of Theorem \ref{thm:bounded}. Lemma \ref{lem:genus} asserts that $\Sigma_j^G(\Theta)\neq\emptyset$. Hence, $c_j^\Gamma(\Theta)\in\mathbb{R}$ for every $j\in \mathbb{N}$.

Given $n\in \mathbb{N}$, we define
$$\ell_n:= \max\left\lbrace 1,\; c_n^\Gamma(\Theta)\left(N\mu_0^{\frac{N-2}{2}}S^{-\frac{N}{2}}\right)-1\right\rbrace,$$
where $\mu_0:=\max\{\mu_1,\mu_2\}$.

If $G$ is a closed subgroup of $\Gamma$ and $\Omega$ is a $G$-invariant bounded smooth domain in $\mathbb{R}^N$ which contains $\Theta$, then $\mathcal{N}(\Theta)^{\Gamma}\subset\mathcal{N}(\Omega)^{G}$ and, hence, $\Sigma_n^\Gamma(\Theta)\subset\Sigma_n^G(\Omega)$ and $c_n^G(\Omega)\leq c_n^\Gamma(\Theta)$. So, if
$$\min_{x\in\bar{\Omega}} \#Gx > \ell_n,$$
then, for each $1\leq j\leq n$, we have that
$$c_j:=c_j^G(\Omega)\leq c_n^G(\Omega)\leq c_n^\Gamma(\Theta) < \left(1+\min_{x\in\bar{\Omega}} \#Gx\right)\frac{1}{N}\mu_0^{\frac{2-N}{2}}S^{\frac{N}{2}}.$$

It follows from Lemma \ref{lem:PS} that $E$ satisfies the $(PS)_{c_j}$-condition on $\mathcal{N}(\Omega)^G$ for every $1\leq j\leq n$. Moreover, by Lemma \ref{lem:genus}, $\Sigma_n^G(\Omega)\neq\emptyset$. Therefore, by Theorem \ref{thm:szulkin}, $E$ has $n$ critical points $(u_1,v_1),...,(u_n,v_n)$ on $\mathcal{N}(\Omega)^{G}$ with $E(u_j,v_j)=c_j^G(\Omega)$. 

If $c_i^G(\Omega)\neq c_j^G(\Omega)$ then $\|(u_i,v_i)\|\neq\|(u_j,v_j)\|$ and these points are nonequivalent in the sense defined in the introduction. If $c_i^G(\Omega)=\cdots =c_{i+m}^G(\Omega)=:c$ for some $m\geq1$, then $\mathrm{genus}(K_{c})\geq2$. This implies that $\#K_c=\infty$, so $E$ has infinitely many nonequivalent critical points with critical value $c$.  

As $c_1^G(\Omega)=\inf_{(u,v)\in\mathcal{N}(\Omega)^G}E(u,v)$ and $E(|u|,|v|)=E(u,v)$, after replacing $(u_1,v_1)$ with $(|u_1|,|v_1|)$ we get a positive critical point. The proof is complete.
\end{proof}

\subsection{Phase separation in bounded domains}

In this subsection we prove Theorem \ref{thm:separation}. 

Let $G$ be a closed subgroup of $O(N)$ and  $\Omega$ be a $G$-invariant smooth bounded domain. Consider the problem
\begin{equation}\label{eq:limitprob}
-\Delta w=\mu_{1}|w^{+}|^{2^{*}-2}w^{+}+\mu_{2}|w^{-}|^{2^{*}-2}w^{-},\qquad w\in D^{1,2}_0(\Omega)^{G},
\end{equation}
where $w^{+}=\max\{w,0\}$ and $w^{-}=\min\{w,0\}$. Let 
\begin{equation*}
J(w):=\frac{1}{2}\int_{\Omega}|\nabla w|^{2}-\frac{1}{2^{*}}\int_{\Omega}(\mu_{1}|w^{+}|^{2^{*}}+\mu_{2}|w^{-}|^{2^{*}}),
\end{equation*}
be its energy functional and  
\begin{align*}
\mathcal{M}^{G}:&=\{w\in D^{1,2}_0(\Omega)^{G}: w\neq 0, J'(w)w=0 \} \\
&=\left\lbrace w\in D^{1,2}_0(\Omega)^{G}: w\neq 0, \int_{\Omega}|\nabla w|^{2}=\int_{\Omega}(\mu_{1}|w^{+}|^{2^{*}}+\mu_{2}|w^{-}|^{2^{*}}) \right\rbrace
\end{align*}
its Nehari manifold. The sign-changing solutions to problem \eqref{eq:limitprob}  lie on the set
\begin{equation*}
\mathcal{E}^{G}:=\{w\in D^{1,2}_0(\Omega)^{G}: w^{+}\in \mathcal{M}^{G},\; w^{-}\in \mathcal{M}^{G}  \}.
\end{equation*}
It is easy to see that $\mathcal{E}^{G}\neq \emptyset$. We define 
\begin{equation*}
c_{\infty}^{G}:=\inf_{w\in \mathcal{E}^{G}}J(w)<\infty. 
\end{equation*}

To emphasize the dependence on $\lambda$, in the following we write $\mathcal{N}_{\lambda}(\Omega)^{G}$ and $E_{\lambda}$, instead of $\mathcal{N}(\Omega)^{G}$ and $E$, for the Nehari manifold and the energy funcional of the system \eqref{eq:system}. Notice that $(w^{+},w^{-})\in \mathcal{N}_{\lambda}^{G}$ and $J(w)=E_{\lambda}(w^{+}, w^{-})$ for every $w\in\mathcal{E}^{G}$ and each  $\lambda<0$. Therefore,  
\begin{equation}\label{eq:upper bound}
c_{\lambda}^{G}:=\inf_{(u,v)\in \mathcal{N}_{\lambda}^{G}}E_{\lambda}(u,v)\leq c_{\infty}^{G}\qquad\text{ for each } \lambda<0.
\end{equation} 

\begin{proof}[Proof of Theorem \ref{thm:separation}.]
Let $\lambda_{k}<0$, $\lambda_{k}\to -\infty$, and $(u_{k},v_{k})\in\mathcal{N}_{\lambda_k}(\Omega)^G$ be such that $u_k\geq0$, $v_k\geq0$ and, for each $k\in \mathbb{N}$,
$$c^G_{\lambda_k} = E_{\lambda_{k}}(u_{k},v_{k})=\frac{1}{N}\int_{\Omega}(|\nabla u_k|^2+|\nabla v_k|^2) \leq \left(1+\min_{x\in\bar{\Omega}} \#Gx\right)\frac{1}{N}\mu_0^{\frac{2-N}{2}}S^{\frac{N}{2}}.$$
Hence, passing to a subsequence, there exist $u_{\infty}, v_{\infty}\in D^{1,2}_0(\Omega)^{G}$ such that
\begin{align*}
u_{k}\rightharpoonup u_{\infty},\qquad &v_{k}\rightharpoonup v_{\infty}, \qquad \text{ weakly in } D^{1,2}_0(\Omega)^{G} \\
u_{k}\rightarrow u_{\infty},\qquad &v_{k}\rightarrow v_{\infty}, \qquad \text{ a.e. in } \Omega.
\end{align*}
In particular, $u_{\infty}\geq 0$ and $v_{\infty}\geq 0$. Also, since $\partial_{u}E_{\lambda_{k}}(u_{k},v_{k})u_{k}=0$, we have that
\begin{equation*}
0\leq -\alpha \lambda_{k}\int_{\Omega}|u_{k}|^{\alpha}|v_{k}|^{\beta}\leq \mu_{1} \int_{\Omega}|u_{k}|^{2^{*}}\leq C
\end{equation*}
and, from Fatou's lemma, we obtain
\begin{equation*}
\int_{\Omega}|u_{\infty}|^{\alpha}|v_{\infty}|^{\beta}\leq\liminf_{k\rightarrow\infty} \int_{\Omega}|u_{k}|^{\alpha}|v_{k}|^{\beta}\leq \frac{C}{\alpha}\lim_{k\rightarrow\infty}\frac{1}{-\lambda_{k}}=0.
\end{equation*}
Therefore, $u_{\infty}v_{\infty}=0$. We claim that
\begin{equation} \label{eq:claim}
u_{\infty}\neq0 \qquad\text{ and } \qquad v_{\infty}\neq0.
\end{equation}

To prove this claim, we argue by contradiction. Assume that $u_{\infty}=0$ and $v_{\infty}\neq 0$; the other cases can be treated in a similar way. Then, $u_{k}\rightharpoonup 0$ in $D^{1,2}_0(\Omega)$ and $\|u_{k}\|^{2}\geq c_0>0$ by Proposition \ref{prop:nehari}(a). Following the argument in the first part of the proof of Lemma \ref{lem:struwe}, one shows that there exists a closed subgroup $K$ of finite index in $G$ and sequences $(\xi_k)$ in $\mathbb{R}^N$ and $(\varepsilon_k)$ in $(0,\infty)$ such that $G_{\xi_k}=K$, \;$\varepsilon_k^{-1}|g\xi_k-\tilde{g}\xi_k|\to\infty$ for any $g,\tilde{g}\in G$ with $g^{-1}\tilde{g}\notin K$, and $\mathrm{dist}(\xi_k,\bar{\Omega})\to 0$. Moreover, the rescaled functions 
$$\widetilde{u}_k(y):=\varepsilon_k^{(N-2)/2}u_k(\varepsilon_ky+\xi_k)$$ 
converge weakly in $D^{1,2}(\mathbb{R}^N)$ to a function $\widetilde{u}$ such that $\widetilde{u}\geq 0$ and $\widetilde{u}\neq 0$. Let $G/K=\{[g_1],\ldots,[g_n]\}$ and set
$$w_k(x):=u_k(x)-\sum_{i=1}^{n}\varepsilon_k^{\frac{2-N}{2}}\widetilde{u}\left(g_{i}^{-1}\left(\frac{x-g_{i}\xi_k}{\varepsilon_k}\right)\right).$$
Arguing as we did to prove equation \eqref{eq:E0}, we obtain
\begin{equation*}
\|u_k\|^2 + o(1) = \|w_k\|^2 + n\|\widetilde{u}\|^2\geq (\min_{x\in\bar{\Omega}} \#Gx)\|\widetilde{u}\|^2.
\end{equation*}
The last inequality holds true because $\Omega$ is smooth and $\mathrm{dist}(\xi_k,\bar{\Omega})\to 0$. Define
\begin{equation*}
\widetilde{s}^{2^{*}-2}:=\frac{\int_{\mathbb{R}^n}|\nabla\widetilde{u}|^{2}}{\int_{\mathbb{R}^{N}}\mu_{1}|\widetilde{u}|^{2^{*}}} \qquad\mbox{ and }\qquad \widetilde{t}^{2^{*}-2}:=\frac{\int_\Omega|\nabla v_{\infty}|^{2}}{\int_{\Omega}\mu_{2}|v_{\infty}|^{2^{*}}}.
\end{equation*}
Set $\widetilde{v}_k(y):=\varepsilon_k^{(N-2)/2}v_k(\varepsilon_ky+\xi_k)$ and $\psi_{k}:=\varepsilon_{k}^{(2-N)/2}\widetilde{u}(\frac{x-\xi_k}{\varepsilon_k})$. Then, as $(u_k,v_k)$ solves \eqref{eq:system}, $\widetilde{u}_k\geq 0$ and $\widetilde{u}\geq 0$, we have that
\begin{align*}
0&=\partial_{u}E(u_k,v_k)\psi_{k}=\partial_{u}E(\widetilde{u}_k,\widetilde{v}_k)\widetilde{u}\\
&=\int_{\mathbb{R}^{N}}\nabla \widetilde{u}_k\cdot\nabla \widetilde{u} - \mu_{1}\int_{\mathbb{R}^{N}}|\widetilde{u}_k|^{2^{*}-1}\widetilde{u} - \lambda_k\alpha\int_{\mathbb{R}^{N}}|\widetilde{u}_k|^{\alpha -1}|\widetilde{v}_k|^{\beta}\widetilde{u} \\
&\geq \int_{\mathbb{R}^{N}}\nabla \widetilde{u}_k\cdot\nabla \widetilde{u} - \mu_{1}\int_{\mathbb{R}^{N}}|\widetilde{u}_k|^{2^{*}-1}\widetilde{u}.
\end{align*}
Passing to the limit we obtain
\begin{equation*}
\int_{\mathbb{R}^{N}}|\nabla \widetilde{u}|^{2}\leq \mu_{1}\int_{\mathbb{R}^{N}}|\widetilde{u}|^{2^{*}}.
\end{equation*}
Hence $\widetilde{s}\in (0,1]$. Similarly, since $\partial_{v}E_{\lambda_{k}}(u_{k},v_{k})v_{\infty}=0$, we get that $\widetilde{t}\in (0,1]$. Therefore,
\begin{align*}
c_{\lambda_k}^{G}&=\frac{1}{N}(\|u_k\|^2+\|v_k\|^2)\geq \left(\min_{x\in\bar{\Omega}} \#Gx\right)\frac{1}{N}\|\widetilde{u}\|^2+ \frac{1}{N}\|v_\infty\|^2 + o(1)\\
&\geq \left(\min_{x\in\bar{\Omega}} \#Gx\right)\frac{1}{N}\|\widetilde{s}\,\widetilde{u}\|^2+ \frac{1}{N}\|\widetilde{t}v_\infty\|^2 + o(1)
\end{align*}
and, passing to the limit, we get
\begin{equation} \label{eq:nontrivial}
\left(1+\min_{x\in\bar{\Omega}} \#Gx\right)\frac{1}{N}\mu_0^{\frac{2-N}{2}}S^{\frac{N}{2}}\geq \left(\min_{x\in\bar{\Omega}} \#Gx\right)\frac{1}{N}\|\widetilde{s}\,\widetilde{u}\|^2+ \frac{1}{N}\|\widetilde{t}v_\infty\|^2.
\end{equation}
As $\widetilde{s}\,\widetilde{u}$ belongs to the Nehari manifold associated to the problem
$$-\Delta u = \mu_1|u|^{2^*-2}u,\qquad u\in D^{1,2}(\mathbb{R}^N),$$
and $\widetilde{t}v_\infty$ belongs to the Nehari manifold associated to the problem
$$-\Delta v = \mu_2|v|^{2^*-2}v,\qquad u\in D^{1,2}_0(\Omega),$$
we have that
$$\|\widetilde{s}\,\widetilde{u}\|^2 \geq\mu_1^{(2-N)/2}S^{N/2}\qquad\text{and}\qquad \|\widetilde{t}v_\infty\|^2 > \mu_2^{(2-N)/2}S^{N/2}.$$
This contradicts the inequality \eqref{eq:nontrivial}. The proof of claim \eqref{eq:claim} is complete.

We define
\begin{equation*}
s^{2^{*}-2}:=\frac{\int_{\Omega}|\nabla u_{\infty}|^{2}}{\int_{\Omega}\mu_{1}|u_{\infty}|^{2^{*}}} \qquad\mbox{ and }\qquad t^{2^{*}-2}:=\frac{\int_{\Omega}|\nabla v_{\infty}|^{2}}{\int_{\Omega}\mu_{2}|v_{\infty}|^{2^{*}}}.
\end{equation*}
Since $u_\infty\geq 0$, $v_\infty\geq 0$ and $u_\infty v_\infty =0$, we have that $su_{\infty}- tv_{\infty}\in\mathcal{E}^{G}$. In addition,  since $\partial_{u}E_{\lambda_{k}}(u_{k},v_{k})u_{\infty}=0$ and $\partial_{v}E_{\lambda_{k}}(u_{k},v_{k})v_{\infty}=0$, arguing as above, we see that $s,t\in (0,1]$. So, using \eqref{eq:upper bound}, we get
\begin{align*}
c_\infty^G &\leq \frac{1}{N}(\|su_\infty\|^2 + \|tv_\infty\|^2)\leq \frac{1}{N}(\|u_\infty\|^2 + \|v_\infty\|^2) \\
&\leq \frac{1}{N}(\|u_\infty\|^2 + \|v_\infty\|^2) + \frac{1}{N}\lim_{k\to\infty}(\|u_k-u_\infty\|^2 + \|v_k-v_\infty\|^2) \\
&= \frac{1}{N}\lim_{k\to\infty}(\|u_k\|^2 + \|v_k\|^2)=\lim_{k\to\infty}E_{\lambda_k}(u_k,v_k)=\lim_{k\to\infty}c_{\lambda_k}^G\leq c_\infty^G.
\end{align*}
Therefore, $u_{k}\to u_{\infty}$ and $v_{k}\to v_{\infty}$ strongly in $D^{1,2}_0(\Omega)^{G}$, $s=t=1$, $u_{\infty}- v_{\infty}\in\mathcal{E}^{G}$ and
\begin{equation*}
J(u_{\infty}-v_{\infty})= \lim_{k\to\infty}c_{\lambda_{k}}^{G}= c_{\infty}^{G}.
\end{equation*}
The argument given in \cite[Lemma 2.6]{ccn} shows that $u_{\infty}-v_{\infty}$ is a critical point of $J$, i.e., $u_{\infty}-v_{\infty}$ is a sign-changing $G$-invariant solution of \eqref{eq:limitprob}. In particular, $u_{\infty}-v_{\infty}$ is continuous. Hence, $u_{\infty}=(u_{\infty}-v_{\infty})^+$ and $-v_{\infty}=(u_{\infty}-v_{\infty})^-$ are $G$-invariant and continuous. Consequently, the sets $\Omega_1:=\{x\in\Omega:u_\infty(x)>0\}$ and $\Omega_2:=\{x\in\Omega:v_\infty(x)>0\}$ are $G$-invariant and open in $\mathbb{R}^N$,  $\Omega_{1}\cap\Omega_{2}=\emptyset$, $\overline{\Omega_{1}\cup\Omega_{2}}=\overline{\Omega}$, $u_{\infty}$ solves the problem
$$-\Delta u=\mu_{1}|u|^{{2}^*-2}u,\qquad u\in D_{0}^{1,2}(\Omega_{1}),$$
and $v_{\infty}$ solves the problem
$$-\Delta v=\mu_{2}|v|^{{2}^*-2}v,\qquad v\in D_{0}^{1,2}(\Omega_{2}),$$
as claimed.
\end{proof}

\subsection{Multiple entire solutions}

Now we turn our attention to the competitive system \eqref{eq:system} in $\mathbb{R}^N$. We shall consider symmetries given by conformal transformations. We give a brief account of the symmetric setting. Details may be found in \cite[Section 3]{cp}.

Let $\mathscr{G}$ be a closed subgroup of $O(N+1)$. Then, $\mathscr{G}$ acts isometrically on the standard sphere $\mathbb{S}^N$. Using the stereographic projection $\sigma:\mathbb{S}^N \to \mathbb{R}^N\cup\{\infty\}$, we transfer this action to $\mathbb{R}^N$. Namely, for each $\mathit{g}\in\mathscr{G}$ we have a conformal transformation $\widetilde{\mathit{g}}:=\sigma\circ\mathit{g}^{-1}\circ\sigma^{-1}:\mathbb{R}^N\to\mathbb{R}^N$, which is well defined except at a single point. 

The space $D^{1,2}(\mathbb{R}^N)$ is a $\mathscr{G}$-Hilbert space with the action defined by
$$\mathit{g}u:=|\det \widetilde{\mathit{g}}'|^{1/2^*} u\circ\widetilde{\mathit{g}}, \qquad \mathit{g}\in\mathscr{G},\,u\in D^{1,2}(\mathbb{R}^N),$$
and $\mathscr{D}(\mathbb{R}^N)$ is also a $\mathscr{G}$-Hilbert space with the diagonal action $\mathit{g}(u,v):=(\mathit{g}u,\mathit{g}v)$. We set
$$\mathscr{D}(\mathbb{R}^N)^\mathscr{G}:=\{(u,v)\in \mathscr{D}(\mathbb{R}^N):\mathit{g}(u,v)=(u,v)\}.$$
It is easy to see that the functional $E$ is $\mathscr{G}$-invariant, and so are the functionals $F_1$ and $F_2$ defined in Proposition \ref{prop:nehari}. Hence, 
$$\mathcal{N}(\mathbb{R}^N)^\mathscr{G}:=\mathcal{N}(\mathbb{R}^N)\cap\mathscr{D}(\mathbb{R}^N)^\mathscr{G}$$
is a closed $\mathcal{C}^1$-submanifold of $\mathscr{D}(\mathbb{R}^N)^\mathscr{G}$ and a natural constraint for $E$.

The advantage of taking this kind of actions is that $O(N+1)$ contains subgroups $\mathscr{G}$ such that the $\mathscr{G}$-orbit of every point $p\in\mathbb{S}^N$ satisfies $0<\dim(\mathscr{G}p)<N$. For example, the group $\mathscr{G}=O(m)\times O(n)$ with $m+n=N+1$, $m,n\geq 2$, has this property, as, for this group, $\mathscr{G}p$ is homeomorphic to either $\mathbb{S}^{m-1}$, or to $\mathbb{S}^{n-1}$, or to $\mathbb{S}^{m-1}\times \mathbb{S}^{n-1}$. This property plays a role in the following lemmas.

\begin{lemma} \label{lem:PSentire}
If $\dim(\mathscr{G}p)>0$ for every $p\in\mathbb{S}^N$, then $E$ satisfies the $(PS)_c$-condition on $\mathcal{N}(\mathbb{R}^N)^\mathscr{G}$ for every $c\in\mathbb{R}$.
\end{lemma}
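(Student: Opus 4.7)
The plan is to mimic the proof of Lemma \ref{lem:PS}, with the role played there by the numerical lower bound $\min_{x\in\overline{\Omega}}\#Gx$ now taken over by the assumption that every orbit is infinite. First, one checks that Lemma \ref{lem:PSonN} remains valid when $\Omega$ is replaced by $\mathbb{R}^N$ and $G$ by $\mathscr{G}$: the proof is purely formal and does not use boundedness of the domain. So it suffices to show that any sequence $(u_k,v_k)\in\mathscr{D}(\mathbb{R}^N)^\mathscr{G}$ with $E(u_k,v_k)\to c$ and $\nabla E(u_k,v_k)\to 0$ has a strongly convergent subsequence.

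The central step is to establish a representation theorem for $\mathscr{G}$-invariant Palais--Smale sequences on the whole space, analogous to Theorem \ref{thm:struwe}. The argument of Lemma \ref{lem:struwe} goes through with a single essential modification: on $\mathbb{R}^N$ bubbles may also concentrate at the point at infinity, i.e.\ one may have $|\xi_k|\to\infty$ or $\varepsilon_k\to\infty$. This is most cleanly handled by transferring via the stereographic projection $\sigma$: the $\mathscr{G}$-action on $\mathbb{R}^N$ is, by construction, the conjugate of an \emph{isometric} action of $\mathscr{G}$ on $\mathbb{S}^N$, and on the compact manifold $\mathbb{S}^N$ every possible center of concentration is an honest point. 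The Brezis--Lieb bookkeeping established in the Appendix, together with the orbit-selection Lemma \ref{lem:cf} applied on $\mathbb{S}^N$ to the pullback sequences $(\sigma^{-1}(\xi_k))$, then produces, for each bubble, a closed subgroup $K_j\leq\mathscr{G}$ that coincides with the isotropy of the concentration point, a profile $(\widetilde{u}_j,\widetilde{v}_j)$ solving the limit system, and an energy identity
\begin{equation*}
c \;=\; E(u,v) \;+\; \sum_{j=1}^m |\mathscr{G}/K_j|\, E(\widetilde{u}_j,\widetilde{v}_j).
\end{equation*}
The Pohozaev obstruction (Proposition \ref{prop:pohozhaev}) is not needed to rule out boundary concentration because there is no boundary; escape to infinity is captured by concentration at the pre-image of $\infty$ in $\mathbb{S}^N$.

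Once the representation is in hand, the conclusion is immediate. By hypothesis $\dim(\mathscr{G}p)>0$ for every $p\in\mathbb{S}^N$, so every orbit is a positive-dimensional compact submanifold and $|\mathscr{G}/K_j|=\infty$ for each $j$. On the other hand, any nontrivial solution of the limit system \eqref{eq:limitsystem} has energy bounded below by a positive constant: either it is semitrivial, in which case Proposition \ref{prop:nehari}(a) applied to the nonzero component gives $E(\widetilde{u}_j,\widetilde{v}_j)\geq\frac{1}{N}\mu_0^{(2-N)/2}S^{N/2}$, or it is fully nontrivial and Proposition \ref{prop:nonmin} gives an even better bound. Hence, if any bubble were present, the right-hand side of the energy identity would be $+\infty$, contradicting $c<\infty$. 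Therefore $m=0$, meaning $(u_k,v_k)\to(u,v)$ strongly in $\mathscr{D}(\mathbb{R}^N)^\mathscr{G}$, which proves the $(PS)_c$-condition at every level.

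The main obstacle is the representation step: one must verify that the proof of Lemma \ref{lem:struwe} truly carries over to the conformal setting, in particular that no bubbling profile is lost when the center $\xi_k$ drifts off to infinity in $\mathbb{R}^N$. Transferring everything to $\mathbb{S}^N$ before starting the concentration analysis is the key technical move, since it turns the $\mathscr{G}$-action into an isometric action on a compact manifold and reduces the situation to an argument nearly identical to the one already given in Section \ref{sec:compactness}.
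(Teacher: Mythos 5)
Your strategy is essentially the one behind the paper's proof, which consists of a single citation: the paper invokes \cite[Proposition 3.6]{cp}, and the argument there is precisely the transfer you describe, namely that the $\mathscr{G}$-action is isometric on $\mathbb{S}^N$, so the symmetric concentration analysis of Section \ref{sec:compactness} can be run on the sphere, and positive-dimensional (hence infinite) orbits leave no room for bubbling. Two corrections to how you close the argument, neither fatal. First, a representation theorem in the form of Theorem \ref{thm:struwe} only ever produces isotropy subgroups $K_j$ of \emph{finite} index in $\mathscr{G}$, so you cannot literally argue that the sum $\sum_j|\mathscr{G}/K_j|\,E(\widetilde u_j,\widetilde v_j)$ is infinite; the correct way to exploit your mechanism is the local-mass argument used inside Lemma \ref{lem:struwe} to prove $|G/K|<\infty$: if concentration occurred at a point whose orbit is infinite, then for every $n$ one finds $n$ orbit points separating at the concentration scale, each ball carrying the fixed mass $\delta$, contradicting the bound $\int f(u_k,v_k)\le \widehat S^{-1}(Nc+1)$. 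When every orbit is infinite this excludes all concentration, i.e.\ it gives $m=0$ directly, which is all the lemma needs and is lighter than the full decomposition you propose to rebuild. Second, the lower energy bound for a semitrivial profile does not come from Proposition \ref{prop:nehari}(a), which concerns fully nontrivial pairs in $\mathcal{N}$ with $\lambda<0$; it is the standard Nehari--Sobolev bound $\frac1N\mu_i^{(2-N)/2}S^{N/2}$ for a nontrivial solution of $-\Delta w=\mu_i|w|^{2^*-2}w$ in $\mathbb{R}^N$, exactly as used in Lemma \ref{lem:PS}. Finally, when you apply Lemma \ref{lem:cf} ``on $\mathbb{S}^N$'', note that it is stated for linear isometric actions on Euclidean space; the clean fix is to apply it to the ambient linear $O(N+1)$-action on $\mathbb{R}^{N+1}$, whose restriction to $\mathbb{S}^N$ is the action you need.
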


\begin{proof}
See \cite[Proposition 3.6]{cp}.
\end{proof}

\begin{lemma} \label{lem:genus_entire}
If $\dim(\mathscr{G}p)<N$ for every $p\in\mathbb{S}^N$, then the set
$$\Sigma_j^\mathscr{G}(\mathbb{R}^N):=\{Z\subset \mathcal{N}(\mathbb{R}^N)^{\mathscr{G}}:Z\text{ is symmetric and compact, and }\mathrm{genus}(Z)\geq j\}$$
is nonempty, for every $j\geq 1$.
\end{lemma}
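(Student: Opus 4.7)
\smallskip

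The plan is to mimic the proof of Lemma \ref{lem:genus}, working conformally on $\mathbb{S}^N$ where $\mathscr{G}$ acts isometrically and Haar averaging is readily available. The one new ingredient is a geometric step: we must produce $2j$ pairwise disjoint $\mathscr{G}$-invariant open subsets of $\mathbb{S}^N$ in which we can place nontrivial $\mathscr{G}$-invariant test functions. Since $\dim(\mathscr{G}p)<N$ for every $p\in\mathbb{S}^N$, no orbit is open in $\mathbb{S}^N$. By the principal orbit theorem (or, equivalently, by connectedness of $\mathbb{S}^N$ together with the slice theorem), this forces the orbit space $\mathbb{S}^N/\mathscr{G}$ to be infinite: otherwise, the union of the finitely many orbits of top dimension would be an open, closed, $\mathscr{G}$-invariant proper subset of the connected space $\mathbb{S}^N$, which is impossible. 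Being infinite and Hausdorff, $\mathbb{S}^N/\mathscr{G}$ has $2j$ pairwise disjoint open subsets; pulling these back by the orbit map yields $2j$ pairwise disjoint $\mathscr{G}$-invariant open subsets $V_1,\ldots,V_{2j}$ of $\mathbb{S}^N$.

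Next, I would construct nontrivial $\mathscr{G}$-invariant functions with the appropriate supports. Choose $p_i\in V_i$ and a smooth compactly supported nonnegative bump $\phi_i$ on $\mathbb{S}^N$ with $\phi_i(p_i)=1$ and support so small that $\mathscr{G}\cdot\mathrm{supp}(\phi_i)\subset V_i$ (possible since $\mathscr{G}$ is compact and $V_i$ is invariant). Set
$$\bar{\phi}_i(p):=\int_{\mathscr{G}}\phi_i(gp)\,dg,$$
where $dg$ is normalized Haar measure. Then $\bar{\phi}_i$ is smooth, $\mathscr{G}$-invariant, supported in $V_i$, and satisfies $\bar{\phi}_i(p_i)>0$ by continuity of the action at $g=e$. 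Transferring to $\mathbb{R}^N$ via the conformal equivalence between $H^1(\mathbb{S}^N)$ and $D^{1,2}(\mathbb{R}^N)$ (under which the $\mathscr{G}$-actions correspond), these produce nontrivial $\mathscr{G}$-invariant functions $u_1,\ldots,u_j,v_1,\ldots,v_j\in D^{1,2}(\mathbb{R}^N)$ with $\mathrm{supp}(u_i)\cap\mathrm{supp}(v_i)=\emptyset$ and $\mathrm{supp}(u_i)\cap\mathrm{supp}(u_k)=\emptyset=\mathrm{supp}(v_i)\cap\mathrm{supp}(v_k)$ whenever $i\neq k$.

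From this point the construction is formally identical to Lemma \ref{lem:genus}. For $(u,v)$ with $u,v\neq 0$ let $s_u,t_v>0$ be the unique scalars with $\|s_u u\|^2=\mu_1|s_u u|_{2^*}^{2^*}$ and $\|t_v v\|^2=\mu_2|t_v v|_{2^*}^{2^*}$, and set $\varrho(u,v):=(s_u u,t_v v)$. When $uv=0$ we have $\varrho(u,v)\in\mathcal{N}(\mathbb{R}^N)$ (the coupling term vanishes), and $\varrho$ is continuous, odd, and preserves $\mathscr{G}$-invariance. Let
$$Q:=\left\{\sum_{i=1}^{j}r_i\tilde{e}_i:\tilde{e}_i\in\{e_i,-e_i\},\;r_i\in[0,1],\;\sum_{i=1}^{j}r_i=1\right\}\subset\mathbb{R}^j,$$
which has genus $j$. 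Define $\psi:Q\to\mathcal{N}(\mathbb{R}^N)^{\mathscr{G}}$ by $\psi(e_i):=\varrho(u_i,v_i)$, $\psi(-e_i):=-\psi(e_i)$, and
$$\psi\left(\sum_{i=1}^{j}r_i\tilde{e}_i\right):=\varrho\left(\sum_{i=1}^{j}r_i\psi(\tilde{e}_i)\right).$$
The disjoint supports of the $u_i,v_i$ guarantee that both components of $\sum r_i\psi(\tilde{e}_i)$ are nonzero on the relative interior of each face of $Q$ and that their product vanishes pointwise, so $\varrho$ is well defined at every point of $Q$. Then $\psi$ is odd and continuous, and $Z:=\psi(Q)$ is a symmetric compact subset of $\mathcal{N}(\mathbb{R}^N)^{\mathscr{G}}$ with $\mathrm{genus}(Z)\geq j$ by the monotonicity of the genus under odd continuous maps (see \cite[Proposition 2.3]{s}).

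The only real obstacle is the dimension-theoretic step showing $\mathbb{S}^N/\mathscr{G}$ is infinite under the hypothesis; once the disjoint invariant open sets are available, the rest is a direct transcription of Lemma \ref{lem:genus}.
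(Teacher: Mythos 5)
Your proposal is correct and follows essentially the same route as the paper: the paper's proof simply picks $2j$ distinct $\mathscr{G}$-orbits (possible because every orbit has dimension $<N$) together with pairwise disjoint $\mathscr{G}$-invariant neighborhoods, and then repeats the construction of Lemma \ref{lem:genus}. Your extra details (infinitude of the orbit space, Haar averaging to build invariant bumps, transfer via the conformal equivalence with $\mathbb{S}^N$) are sound elaborations of the same argument rather than a different method.
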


\begin{proof}
If $\dim(\mathscr{G}p)<N$ for every $p\in\mathbb{S}^N$, then, for any given $j\geq 1$, there exist $2j$ pairwise disjoint open $\mathscr{G}$-invariant subsets of $\mathbb{R}^N$ (take, for example, $2j$ distinct $\mathscr{G}$-orbits and pairwise disjoint $\mathscr{G}$-invariant neighborhoods of them). Now we may argue as in the proof of Lemma \ref{lem:genus}. 
\end{proof}

Set
$$c_j^\mathscr{G}(\mathbb{R}^N):=\inf_{Z\in\Sigma_j^\mathscr{G}(\mathbb{R}^N)}\,\max_{(u,v)\in Z}E(u,v).$$

\begin{proof}[Proof of Theorem \ref{thm:entire} for $\lambda<0$.]
Let $\mathscr{G}$ be a closed subgroup of $O(N+1)$ such that $0<\dim(\mathscr{G}p)<N$ for every $p\in\mathbb{S}^N$. Then, by Lemmas \ref{lem:PSentire} and \ref{lem:genus_entire} and Theorem \ref{thm:szulkin}, we have that $c_j^\mathscr{G}(\mathbb{R}^N)$ is a critical value of the restriction of $E$ to $\mathcal{N}(\mathbb{R}^N)^\mathscr{G}$ for every $j\geq 1$. 

Moreover, as $E$ satisfies the $(PS)_c$-condition on $\mathcal{N}(\mathbb{R}^N)^\mathscr{G}$, the critical sets $K_c$ are compact and, hence, $\mathrm{genus}(K_c)<\infty$ for every $c\in\mathbb{R}$. It follows from Theorem \ref{thm:szulkin} that $\#\{c_j^\mathscr{G}(\mathbb{R}^N):j\geq 1\}=\infty$, i.e., $E$ has infinitely many critical values on $\mathcal{N}(\mathbb{R}^N)^\mathscr{G}$.

After replacing the minimizer $(u_1,v_1)$ of $E$ on $\mathcal{N}(\mathbb{R}^N)^\mathscr{G}$ with $(|u_1|,|v_1|)$, we get a positive critical point. The proof is complete.
\end{proof}

\appendix

\section{Appendix. Some results for the mixed term} \label{sec:bl}

\begin{lemma} \label{lem:mixed}
Let $\alpha,\beta \in [1,\infty)$. Given $\varepsilon>0$, there exists $C>0$ such that
\begin{align*}
& \left| |a_1+b_1|^{\alpha} |a_2+b_2|^{\beta} - |a_1|^{\alpha}|a_2|^{\beta} \right| \\ 
& \qquad \leq \varepsilon |a_1|^{\alpha}|a_2|^{\beta} + C\left(|a_1|^{\alpha}|b_2|^{\beta} + |b_1|^{\alpha}|a_2|^{\beta} + |b_1|^{\alpha}|b_2|^{\beta} \right),
\end{align*}
for all $a_1,a_2,b_1,b_2 \in \mathbb{R}$.
\end{lemma}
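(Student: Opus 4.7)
The plan is to reduce this two-variable product inequality to the classical one-variable Brezis--Lieb estimate
\begin{equation*}
\bigl||a+b|^{\gamma} - |a|^{\gamma}\bigr| \leq \eta\,|a|^{\gamma} + C(\eta,\gamma)\,|b|^{\gamma},\qquad\gamma\geq 1,\ \eta>0,
\end{equation*}
which follows from the mean value theorem together with Young's inequality.

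The first step is the telescoping identity
\begin{equation*}
|a_1+b_1|^{\alpha}|a_2+b_2|^{\beta}-|a_1|^{\alpha}|a_2|^{\beta}
= \bigl(|a_1+b_1|^{\alpha}-|a_1|^{\alpha}\bigr)|a_2+b_2|^{\beta}
+ |a_1|^{\alpha}\bigl(|a_2+b_2|^{\beta}-|a_2|^{\beta}\bigr),
\end{equation*}
which, by the triangle inequality, reduces the problem to estimating the two summands separately.

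The second step is to apply the classical estimate to each difference: given $\eta_1,\eta_2>0$ there exist constants $C_1=C_1(\eta_1,\alpha)$ and $C_2=C_2(\eta_2,\beta)$ with
\begin{equation*}
\bigl||a_1+b_1|^{\alpha}-|a_1|^{\alpha}\bigr|\leq \eta_1|a_1|^{\alpha}+C_1|b_1|^{\alpha},\qquad
\bigl||a_2+b_2|^{\beta}-|a_2|^{\beta}\bigr|\leq \eta_2|a_2|^{\beta}+C_2|b_2|^{\beta},
\end{equation*}
and to use the convexity bound $|a_2+b_2|^{\beta}\leq 2^{\beta-1}(|a_2|^{\beta}+|b_2|^{\beta})$ (valid since $\beta\geq 1$) to control the leftover factor $|a_2+b_2|^{\beta}$ in the first summand.

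The third step is bookkeeping: multiplying out, one finds that the coefficient of the "dominant" term $|a_1|^{\alpha}|a_2|^{\beta}$ is at most $2^{\beta-1}\eta_1+\eta_2$, while every other term is of the form $|a_1|^{\alpha}|b_2|^{\beta}$, $|b_1|^{\alpha}|a_2|^{\beta}$ or $|b_1|^{\alpha}|b_2|^{\beta}$, with coefficient depending only on $\alpha,\beta,\eta_1,\eta_2$. Given $\varepsilon>0$, choose $\eta_1=\varepsilon\,2^{-\beta}$ and $\eta_2=\varepsilon/2$ so that $2^{\beta-1}\eta_1+\eta_2\leq \varepsilon$, and let $C$ be the maximum of the remaining coefficients. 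This yields the asserted inequality, with no serious obstacle beyond careful expansion.
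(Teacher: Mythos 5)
Your proof is correct and follows essentially the same route as the paper: a telescoping decomposition of the product, the one-variable estimate $\bigl||a+b|^{\gamma}-|a|^{\gamma}\bigr|\leq \eta|a|^{\gamma}+C(\eta,\gamma)|b|^{\gamma}$ applied to each difference, the convexity bound $|x+y|^{\gamma}\leq 2^{\gamma-1}(|x|^{\gamma}+|y|^{\gamma})$ on the leftover factor, and a final choice of the small parameters. The only (immaterial) difference is that the paper adds and subtracts $|a_1+b_1|^{\alpha}|a_2|^{\beta}$ rather than $|a_1|^{\alpha}|a_2+b_2|^{\beta}$, so the convexity bound lands on $|a_1+b_1|^{\alpha}$ instead of $|a_2+b_2|^{\beta}$.
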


\begin{proof}
Fix $0<\delta<\min\{\varepsilon / (2^{\alpha}+1),\,1 \}$. Then, there exists $\bar{C}>0$ such that
\begin{align*}
& | |a_1+b_1|^{\alpha} |a_2+b_2|^{\beta} - |a_1|^{\alpha}|a_2|^{\beta} | \\ 
&\qquad \leq |a_1+b_1|^{\alpha} \, | |a_2+b_2|^{\beta} - |a_2|^{\beta} | + | |a_1+b_1|^{\alpha} - |a_1|^{\alpha} |\, |a_2|^{\beta} \\
&\qquad \leq 2^{\alpha}(|a_1|^{\alpha}+|b_1|^{\alpha})(\delta|a_2|^{\beta} + \bar{C}|b_2|^{\beta}) + (\delta|a_1|^{\alpha} + \bar{C}|b_1|^{\alpha})|a_2|^{\beta} \\
&\qquad \leq \varepsilon |a_1|^{\alpha}|a_2|^{\beta} + 2^{\alpha}\bar{C}|a_1|^{\alpha}|b_2|^{\beta} + (2^{\alpha}+\bar{C})|b_1|^{\alpha}|a_2|^{\beta} + 2^{\alpha}\bar{C}|b_1|^{\alpha}|b_2|^{\beta},
\end{align*}
as claimed.
\end{proof}

\begin{lemma} \label{lem:bl}
If $u_k \rightharpoonup u$ and $v_k \rightharpoonup v$ weakly in $D^{1,2}(\mathbb{R}^N)$, $u,v \in L^{\infty}_{\mathrm{loc}}(\mathbb{R}^N)$, $\alpha,\beta \in [1,\infty)$ and $\alpha + \beta = 2^*$, then, after passing to a subsequence,
$$\int_{\mathbb{R}^N} |u_k|^{\alpha}|v_k|^{\beta} - \int_{\mathbb{R}^N} |u_k-u|^{\alpha}|v_k-v|^{\beta} - \int_{\mathbb{R}^N} |u|^{\alpha}|v|^{\beta} = o(1).$$
\end{lemma}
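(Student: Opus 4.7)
The plan is to run the classical Brezis-Lieb truncation argument with Lemma \ref{lem:mixed} playing the role of the one-variable pointwise inequality $\bigl||a+b|^p-|a|^p\bigr|\leq\varepsilon|a|^p+C_\varepsilon|b|^p$. Passing to a subsequence and using Rellich's compactness on an exhaustion of $\mathbb{R}^N$ by balls, I may assume $u_k\to u$ and $v_k\to v$ pointwise almost everywhere, so the integrand
\[
f_k \;:=\; |u_k|^\alpha|v_k|^\beta - |u_k-u|^\alpha|v_k-v|^\beta - |u|^\alpha|v|^\beta
\]
tends to zero a.e.; the task is to upgrade this to $L^1$-convergence.

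Fix $\varepsilon>0$. Apply Lemma \ref{lem:mixed} with $a_1=u_k-u$, $b_1=u$, $a_2=v_k-v$, $b_2=v$ (so that $a_i+b_i$ recover $u_k,v_k$) and then insert and subtract $|u|^\alpha|v|^\beta$ to get
\[
|f_k| \;\leq\; \varepsilon\,|u_k-u|^\alpha|v_k-v|^\beta + C_\varepsilon\Phi_k + (C_\varepsilon+1)\,|u|^\alpha|v|^\beta,
\]
where $\Phi_k := |u_k-u|^\alpha|v|^\beta + |u|^\alpha|v_k-v|^\beta$. I would then show $\Phi_k\to 0$ in $L^1(\mathbb{R}^N)$ by splitting at radius $R$: on $B_R$ use $u,v\in L^\infty(B_R)$ together with the Rellich-driven strong convergence $u_k\to u$ in $L^\alpha(B_R)$ and $v_k\to v$ in $L^\beta(B_R)$ (valid since $\alpha,\beta<2^*$); on the complement apply H\"older with the conjugate pair $(2^*/\alpha,\,2^*/\beta)$, invoke the uniform $L^{2^*}$-bounds on $u_k-u$ and $v_k-v$ coming from weak convergence in $D^{1,2}$, and use the absolute continuity of the $L^{2^*}$-norms of $u$ and $v$ to make the tails uniformly small.

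With $\Phi_k\to 0$ in $L^1$ in hand, set $W_k^\varepsilon := \bigl(|f_k|-\varepsilon\,|u_k-u|^\alpha|v_k-v|^\beta\bigr)^+$. Then $W_k^\varepsilon\to 0$ a.e., and the pointwise bound above gives $0\leq W_k^\varepsilon\leq C_\varepsilon\Phi_k + \Psi$ with $\Psi:=(C_\varepsilon+1)|u|^\alpha|v|^\beta\in L^1(\mathbb{R}^N)$ (by H\"older, using $u,v\in L^{2^*}$). To handle the $k$-dependent dominant, I would write $W_k^\varepsilon = \min\{W_k^\varepsilon,\Psi\} + (W_k^\varepsilon-\Psi)^+$: the standard dominated convergence theorem applies to the first summand since it is bounded by $\Psi\in L^1$ and tends to $0$ a.e., while the excess satisfies $(W_k^\varepsilon-\Psi)^+\leq C_\varepsilon\Phi_k$, whose integral is $o(1)$. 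Hence $\int W_k^\varepsilon\to 0$. Combined with the uniform H\"older bound on $\int|u_k-u|^\alpha|v_k-v|^\beta$, this yields $\int|f_k| = o(1) + O(\varepsilon)$, and letting $\varepsilon\to 0$ finishes the proof.

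The main obstacle is that the pointwise inequality produced by Lemma \ref{lem:mixed} inevitably retains a stubborn fixed term $|u|^\alpha|v|^\beta$ that does not vanish with $k$, so neither ordinary dominated convergence nor Fatou applied directly to $|f_k|$ succeeds. The truncation by $\varepsilon\,|u_k-u|^\alpha|v_k-v|^\beta$ followed by the ``varying dominant'' form of DCT described above is precisely engineered to absorb this obstruction.
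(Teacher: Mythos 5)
Your proposal is correct and follows essentially the same route as the paper's proof: the same application of Lemma \ref{lem:mixed} with $a_1=u_k-u$, $b_1=u$, $a_2=v_k-v$, $b_2=v$, the same splitting at a large radius $R$ (using $u,v\in L^\infty_{\mathrm{loc}}$ with local strong convergence inside, and H\"older plus tail-smallness of the $L^{2^*}$-norms outside), and the same Brezis--Lieb truncation by $\varepsilon|u_k-u|^\alpha|v_k-v|^\beta$ finished by dominated convergence. The only difference is bookkeeping: the paper subtracts the terms $C\left(|u|^{\alpha}|v_k-v|^{\beta}+|u_k-u|^{\alpha}|v|^{\beta}\right)$ before taking the positive part so that the remainder is dominated by the fixed function $(C+1)|u|^\alpha|v|^\beta$, whereas you keep them in the dominating function and absorb them via your $\min$/excess variant of dominated convergence together with $\Phi_k\to 0$ in $L^1$ — both are valid.
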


\begin{proof}
After passing to a subsequence, we have that $u_k \to u$ and $v_k \to v$ a.e. in $\mathbb{R}^N$, $u_k \to u$ in $L^{\alpha}_{\mathrm{loc}}(\mathbb{R}^N)$ and $v_k \to v$ in in $L^{\beta}_{\mathrm{loc}}(\mathbb{R}^N)$.

Let $\varepsilon>0$, and fix $C>0$ as in Lemma \ref{lem:mixed}. Set
\begin{align}
w_k := &\left| |u_k|^{\alpha}|v_k|^{\beta} - |u_k-u|^{\alpha}|v_k-v|^{\beta} - |u|^{\alpha}|v|^{\beta} \right| \nonumber \\
&-\varepsilon|u_k-u|^{\alpha}|v_k-v|^{\beta} - C\left(|u|^{\alpha}|v_k-v|^{\beta} + |u_k-u|^{\alpha}|v|^{\beta} \right). \label{eq:w_k}
\end{align}
Then, $w_k \to 0$ a.e. in $\mathbb{R}^N$ and $w_k \in L^1(\mathbb{R}^N)$. Moreover, applying Lemma \ref{lem:mixed} with $a_1:=u_k-u, \,a_2=:v_k-v, \,b_1:=u,\,b_2:=v$, we get that $w_k \leq (C+1)|u|^{\alpha}|v|^{\beta}$. So, by the dominated convergence theorem,
\begin{equation} \label{eq:w+}
\lim_{k\to\infty} \int_{\mathbb{R}^N} w_k^+ =0,
\end{equation}
where $w_k^+ := \max\{w_k,0\}$. Fix $R>0$ large enough so that
\begin{align}
&C\int_{|x|\geq R} |u|^{\alpha}|v_k-v|^{\beta}\leq C|v_k-v|_{2^*}^\beta\left(\int_{|x|\geq R}|u|^{2^*}\right)^{\alpha / 2^*} < \varepsilon, \label{eq:exterior1} \\
&C\int_{|x|\geq R} |u_k-u|^{\alpha}|v|^{\beta}\leq C|u_k-u|_{2^*}^\alpha\left(\int_{|x|\geq R}|v|^{2^*}\right)^{\beta / 2^*} < \varepsilon, \label{eq:exterior2}
\end{align}
where $|\cdot|_{2^*}$ denotes the norm in $L^{2^*}(\mathbb{R}^N)$. Then, for $k$ large enough,
\begin{equation} \label{eq:interior1}
C\int_{|x| < R} |u|^{\alpha}|v_k-v|^{\beta} \leq C \max_{|x| \leq R}|u(x)|^{\alpha} \int_{|x| < R} |v_k-v|^{\beta} < \varepsilon,
\end{equation}
\begin{equation} \label{eq:interior2}
C\int_{|x| < R} |u_k-u|^{\alpha}|v|^{\beta} \leq C \max_{|x| \leq R}|v(x)|^{\beta} \int_{|x| < R} |u_k-u|^{\alpha} < \varepsilon.
\end{equation}
From \eqref{eq:w_k}, \eqref{eq:w+}, \eqref{eq:exterior1}, \eqref{eq:exterior2}, \eqref{eq:interior1} and \eqref{eq:interior2} we get that
\begin{align*}
&\int_{\mathbb{R}^N}\left| |u_k|^{\alpha}|v_k|^{\beta} - |u_k-u|^{\alpha}|v_k-v|^{\beta} - |u|^{\alpha}|v|^{\beta} \right| \\
&\leq \varepsilon\int_{\mathbb{R}^N}|u_k-u|^{\alpha}|v_k-v|^{\beta} + C\int_{\mathbb{R}^N}\left(|u|^{\alpha}|v_k-v|^{\beta} + |u_k-u|^{\alpha}|v|^{\beta}\right) + \int_{\mathbb{R}^N} w_k^+ \\
&<\tilde{C}\varepsilon
\end{align*}
for $k$ large enough, as claimed.
\end{proof}

Taking $u_k=v_k$ in Lemma \ref{lem:bl}, we obtain the well known Brezis-Lieb identity
$$\int_{\mathbb{R}^N} |u_k|^{2^*} - \int_{\mathbb{R}^N} |u_k-u|^{2^*} - \int_{\mathbb{R}^N} |u|^{2^*} = o(1).$$

\begin{lemma} \label{lem:auxiliary}
Let $\Theta$ be a bounded domain in $\mathbb{R}^N$. If $u_k \rightharpoonup u$ and $v_k \rightharpoonup v$ weakly in $D^{1,2}(\mathbb{R}^N)$, $u,v \in L^{\infty}_{\mathrm{loc}}(\mathbb{R}^N)$, $\alpha,\beta \in (1,\infty)$ and $\alpha + \beta = 2^*$, then, after passing to a subsequence, the following statements hold true:
\begin{itemize}
\item[$(a)$] If $q\in [1,2^*)$ and $\bar{q}:=\frac{q}{\beta -1}\geq 1$, then $\left||v_k|^{\beta}-|v_k-v|^{\beta}-|v|^{\beta}\right| \in L^{\bar{q}}(\Theta)$ and
$$\lim_{k\to\infty}\int_{\Theta} \left||v_k|^{\beta}-|v_k-v|^{\beta}-|v|^{\beta}\right|^{\bar{q}}=0.$$
\item[$(b)$] If $\alpha>2$, $p\in [1,2^*)$ and $\bar{p}:=\frac{p}{\alpha-2}\geq 1$, then 
$$\left||u_k|^{\alpha -2}u_k-|u_k-u|^{\alpha -2}(u_k-u)-|u|^{\alpha -2}u\right| \in L^{\bar{p}}(\Theta) \quad \text{and}$$
\begin{equation} \label{eq:alpha}
\lim_{k\to\infty}\int_{\Theta} \left||u_k|^{\alpha -2}u_k-|u_k-u|^{\alpha -2}(u_k-u)-|u|^{\alpha -2}u\right|^{\bar{p}}=0.
\end{equation}
If $\alpha\in (1,2]$, then $\left||u_k|^{\alpha -2}u_k-|u_k-u|^{\alpha -2}(u_k-u)-|u|^{\alpha -2}u\right| \in L^{\infty}(\Theta)$ and \eqref{eq:alpha} is true for every $\bar{p}\in [1,\infty)$.
\end{itemize}
\end{lemma}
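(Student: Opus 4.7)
The central idea in both parts is the same: show that the integrand converges to zero almost everywhere, then close the argument with a uniform integrability bound on the bounded set $\Theta$ so that Vitali's theorem (or dominated convergence) delivers the $L^{\bar q}$ (resp.\ $L^{\bar p}$) convergence. Since $\Theta$ is bounded and $u,v\in L^{\infty}_{\mathrm{loc}}(\mathbb{R}^N)$, we may treat $\|u\|_{L^\infty(\Theta)}$ and $\|v\|_{L^\infty(\Theta)}$ as finite constants. After passing to a subsequence we have $u_k\to u$ and $v_k\to v$ a.e., so the integrands in $(a)$ and $(b)$ converge to zero pointwise a.e.

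For part $(a)$, I would set $f_k:=|v_k|^{\beta}-|v_k-v|^{\beta}-|v|^{\beta}$ and derive the pointwise estimate
\[
|f_k|\leq C\bigl(|v_k-v|^{\beta-1}|v|+|v|^{\beta}\bigr),
\]
obtained by writing $v_k=(v_k-v)+v$ and applying the mean value theorem to $s\mapsto|s|^{\beta}$ together with the subadditivity $(a+b)^{\beta-1}\leq C(a^{\beta-1}+b^{\beta-1})$. Raising to the $\bar q$-th power and using $(\beta-1)\bar q=q$ together with the $L^{\infty}(\Theta)$-bound on $v$ yields
\[
|f_k|^{\bar q}\leq C\bigl(|v_k-v|^{q}+1\bigr)\quad\text{on }\Theta.
\]
Since $v_k-v$ is bounded in $L^{2^*}(\mathbb{R}^N)$ and $q<2^*$, Hölder's inequality on the bounded set $\Theta$ shows that $\{|f_k|^{\bar q}\}$ is bounded in $L^{r}(\Theta)$ for some $r>1$, hence equi-integrable on $\Theta$. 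Vitali's theorem then gives $\int_\Theta|f_k|^{\bar q}\to 0$, and the same bound gives $f_k\in L^{\bar q}(\Theta)$.

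For part $(b)$ with $\alpha>2$, I would repeat the scheme with $\varphi(s):=|s|^{\alpha-2}s$ in place of $|s|^{\beta}$. Since $\varphi$ is $C^{1}$ with $\varphi'(s)=(\alpha-1)|s|^{\alpha-2}$ and $\alpha-2>0$, the same mean-value/subadditivity step produces
\[
|W_k|\leq C\bigl(|u_k-u|^{\alpha-2}|u|+|u|^{\alpha-1}\bigr),
\]
where $W_k:=\varphi(u_k)-\varphi(u_k-u)-\varphi(u)$ is the integrand in \eqref{eq:alpha}. Now $(\alpha-2)\bar p=p<2^*$, so $|W_k|^{\bar p}\leq C(|u_k-u|^{p}+1)$ on $\Theta$, and the Vitali argument of part $(a)$ concludes. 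For $\alpha\in(1,2]$ the function $\varphi$ is globally Hölder with exponent $\alpha-1\in(0,1]$, so
\[
|\varphi(s)-\varphi(t)|\leq C|s-t|^{\alpha-1}\quad\text{for all }s,t\in\mathbb{R},
\]
which immediately gives $|W_k|\leq C|u|^{\alpha-1}$. Hence $\{W_k\}$ is uniformly bounded on $\Theta$ by a constant, and dominated convergence yields the claim for every $\bar p\in[1,\infty)$.

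\textbf{Main obstacle.} The delicate point is the case split in part $(b)$. When $\alpha>2$, $\varphi'$ is continuous and the same type of polynomial bound used in part $(a)$ works. When $\alpha\in(1,2]$, however, $\varphi'$ is singular at the origin and one cannot decouple $|u|$ from $|u_k-u|$ via the product bound above; instead the global Hölder continuity of $\varphi$ must be invoked to produce a bound in $|u|$ alone, which is why this regime gives $L^{\infty}(\Theta)$ control rather than a bound governed by a subcritical norm. Getting the exponent book-keeping right, so that $(\beta-1)\bar q$ and $(\alpha-2)\bar p$ are exactly $q$ and $p$, is precisely what promotes the subcritical hypothesis $q,p<2^*$ into uniform integrability.
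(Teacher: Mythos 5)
Your proof is correct and follows essentially the same route as the paper: the same mean-value (for $\alpha>2$, $\beta$) and Hölder-continuity (for $\alpha\in(1,2]$) pointwise estimates, the same exponent bookkeeping $(\beta-1)\bar q=q$ and $(\alpha-2)\bar p=p$, and a standard limit theorem on the bounded set $\Theta$ using $u,v\in L^\infty(\Theta)$. The only (harmless) difference is the closing step: the paper uses the strong convergence $v_k\to v$ in $L^q(\Theta)$ and $u_k\to u$ in $L^p(\Theta)$ together with a dominated-convergence argument on the positive part of an auxiliary function, while you use boundedness of $|f_k|^{\bar q}$ (resp. $|W_k|^{\bar p}$) in $L^{2^*/q}(\Theta)$ (resp. $L^{2^*/p}(\Theta)$) to get uniform integrability and conclude via Vitali's theorem; both arguments are valid and interchangeable here.
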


\begin{proof}
Throughout the proof $C$ will denote a positive constant, not necessarily the same one.

$(a):$ Passing to a subsequence we have that $v_k\to v$ in $L^q(\Theta)$ and a.e. in $\Theta$. Using the mean value theorem we get that
\begin{align} 
\left||v_k|^{\beta}-|v_k-v|^{\beta}\right|&\leq \beta\left(|v_k-v|+|v|\right)^{\beta-1}|v| \nonumber \\
&\leq C\left(|v_k-v|^{\beta-1}|v|+|v|^\beta\right). \label{eq:meanvalue1}
\end{align}
Hence, as $v \in L^{\infty}(\Theta)$, we obtain
\begin{align*}
\left||v_k|^{\beta}-|v_k-v|^{\beta}-|v|^{\beta}\right|^{\bar{q}}&\leq C\left(|v_k-v|^{\beta-1}|v|+|v|^\beta\right)^{\bar{q}} \\
&\leq C|v_k-v|^q + C \qquad \text{a.e. in }\Theta. 
\end{align*}
This implies that $\left||v_k|^{\beta}-|v_k-v|^{\beta}-|v|^{\beta}\right| \in L^{\bar{q}}(\Theta)$. Set
$$w_k:=\left||v_k|^{\beta}-|v_k-v|^{\beta}-|v|^{\beta}\right|^{\bar{q}}-C|v_k-v|^q.$$
Then, $|w_k^+|\leq C$ and, by the dominated convergence theorem,
$$\lim_{k\to\infty}\int_{\Theta} \left||v_k|^{\beta}-|v_k-v|^{\beta}-|v|^{\beta}\right|^{\bar{q}}\leq \lim_{k\to\infty}\int_\Theta w_k^+ + \lim_{k\to\infty}C\int_\Theta |v_k-v|^q=0,$$
as claimed.

$(b):$ Passing to a subsequence we have that $u_k\to u$ in $L^p(\Theta)$ and a.e. in $\Theta$. From the mean value theorem we obtain
\begin{align}
&\left||u_k|^{\alpha -2}u_k-|u_k-u|^{\alpha -2}(u_k-u)\right| \leq(\alpha-1)\left(|u_k-u|+|u|\right)^{\alpha-2}|u| \nonumber \\
& \qquad \qquad\leq
\begin{cases} 
C\left(|u_k-u|^{\alpha-2}|u| + |u|^{\alpha-1}\right) &\text{if }\alpha>2 \\
(\alpha-1)|u|^{\alpha -1} &\text{if }\alpha\in (1,2]. \label{eq:meanvalue2}
\end{cases}
\end{align}
As $u\in L^\infty(\Theta)$, the statement follows immediately from the dominated convergence theorem if $\alpha\in (1,2]$. For $\alpha>2$ the argument is similar to the one we used to prove $(a)$.
\end{proof}

\begin{lemma} \label{lem:derivative}
If $u_k \rightharpoonup u$ and $v_k \rightharpoonup v$ weakly in $D^{1,2}(\mathbb{R}^N)$, $u,v \in L^{\infty}_{\mathrm{loc}}(\mathbb{R}^N)$, $\alpha,\beta \in (1,\infty)$ and $\alpha + \beta = 2^*$, then, after passing to a subsequence,
$$\int_{\mathbb{R}^N} |u_k|^{\alpha-2}|v_k|^{\beta}u_k - \int_{\mathbb{R}^N} |u_k-u|^{\alpha-2}|v_k-v|^{\beta}(u_k-u) - \int_{\mathbb{R}^N} |u|^{\alpha-1}|v|^{\beta}u = o(1),$$
$$\int_{\mathbb{R}^N} |u_k|^{\alpha}|v_k|^{\beta-2}v_k - \int_{\mathbb{R}^N} |u_k-u|^{\alpha}|v_k-v|^{\beta-2}(v_k-v) - \int_{\mathbb{R}^N} |u|^{\alpha}|v|^{\beta-2}v = o(1),$$
in $(\mathscr{D}(\mathbb{R}^N))'$.
\end{lemma}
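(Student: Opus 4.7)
The plan is to show that the function
\[
A_k := |u_k|^{\alpha-2}u_k|v_k|^{\beta} - |u_k-u|^{\alpha-2}(u_k-u)|v_k-v|^{\beta} - |u|^{\alpha-2}u|v|^{\beta}
\]
converges to $0$ strongly in $L^{(2^{*})'}(\mathbb{R}^{N})$, where $(2^{*})' := \frac{2N}{N+2}$. Since $L^{(2^{*})'}(\mathbb{R}^{N})$ embeds continuously into $(D^{1,2}(\mathbb{R}^{N}))'$ by H\"older's inequality followed by the Sobolev inequality, and since the first component of any $(\varphi,\psi)\in\mathscr{D}(\mathbb{R}^{N})$ with $\|(\varphi,\psi)\|\le 1$ satisfies $\|\varphi\|_{D^{1,2}}\le 1$, this would yield convergence to $0$ in $(\mathscr{D}(\mathbb{R}^{N}))'$, which is the first claimed identity; the second then follows by the symmetry $\alpha\leftrightarrow\beta$, $u\leftrightarrow v$. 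After extraction of a subsequence, $u_k\to u$ and $v_k\to v$ a.e., so that $A_k\to 0$ a.e.

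Following the scheme of the proofs of Lemmas \ref{lem:bl} and \ref{lem:auxiliary}, I would write $u_k=(u_k-u)+u$ and $v_k=(v_k-v)+v$, set $g(s):=|s|^{\alpha-2}s$ and $h(t):=|t|^{\beta}$, and introduce the Brezis--Lieb-type remainders
\[
R_g^k:=g(u_k)-g(u_k-u)-g(u),\qquad R_h^k:=h(v_k)-h(v_k-v)-h(v).
\]
Expanding $g(u_k)h(v_k)$ via these splittings and subtracting $g(u_k-u)h(v_k-v)$ and $g(u)h(v)$ gives the decomposition
\begin{align*}
A_k={}& g(u_k-u)\,h(v)+g(u)\,h(v_k-v)+g(u_k-u)\,R_h^k+g(u)\,R_h^k\\
& +R_g^k\,h(v_k-v)+R_g^k\,h(v)+R_g^k\,R_h^k.
\end{align*}

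The plan is then to prove that each of these seven summands tends to $0$ in $L^{(2^{*})'}(\mathbb{R}^{N})$. The ``mixed-type'' terms $g(u_k-u)h(v)$ and $g(u)h(v_k-v)$ are controlled by splitting $\mathbb{R}^N=B_R\cup B_R^c$: on $B_R$ the compact Sobolev embedding $D^{1,2}(B_R)\hookrightarrow L^{q}(B_R)$ for $q<2^{*}$ together with the $L^{\infty}_{\mathrm{loc}}$-bound on $u,v$ yields strong local convergence to $0$, and on $B_R^c$ the tails $\int_{B_R^c}|v|^{2^{*}}$ and $\int_{B_R^c}|u|^{2^{*}}$ can be made uniformly small via H\"older, using the crucial identity $(\alpha-1)(2^{*})'+\beta(2^{*})'=2^{*}$. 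The summands involving $R_g^k$ or $R_h^k$ are handled analogously, replacing the compact-embedding input by the local $L^{\bar p}$-convergence of $R_g^k$ (resp.\ $R_h^k$) provided by Lemma \ref{lem:auxiliary}(a)--(b).

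The subtlest term is $R_g^k\,R_h^k$, since both factors vanish only a.e.\ and in suitable local $L^{\bar p}$-spaces. Here I would apply H\"older inside the integral with conjugate exponents matched so that the two factors fall in ranges covered by Lemma \ref{lem:auxiliary}, reducing to a product of two local-$L^{\bar p}$-norms that tend to $0$, and would control the tail by a Cartesian H\"older estimate together with the uniform bounds $\|u_k\|_{2^{*}},\|v_k\|_{2^{*}}\leq C$ and the smallness of $\int_{B_R^c}|u|^{2^{*}},\int_{B_R^c}|v|^{2^{*}}$. This exponent matching, together with distinguishing the regimes $\alpha\geq 2$ versus $\alpha\in(1,2]$ (and similarly for $\beta$) in Lemma \ref{lem:auxiliary}(b), is where the main technical obstacle lies; the rest is a routine chain of H\"older inequalities and dominated-convergence arguments.
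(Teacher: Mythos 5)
Your proposal is correct and is essentially the paper's own argument: the same splitting of $\mathbb{R}^N$ into $B_R$ and $B_R^c$, the same use of Lemma \ref{lem:auxiliary} and of H\"older's inequality with the exponent identity $(\alpha-1)(2^{*})'+\beta(2^{*})'=2^{*}$, and the same reduction of the second identity by symmetry; the only repackaging is that you prove strong $L^{(2^{*})'}(\mathbb{R}^N)$ convergence of the full remainder $A_k$ via a seven-term product expansion, whereas the paper estimates the pairing with a test function $\varphi$ directly, grouping the terms slightly differently inside and outside $B_R$. Just note that for the tails of the terms involving $R_g^k$ or $R_h^k$ you also need the pointwise mean-value bounds \eqref{eq:meanvalue1}--\eqref{eq:meanvalue2} (from the proof of Lemma \ref{lem:auxiliary}), so that every tail piece carries a factor of $|u|$ or $|v|$ and the smallness of $\int_{B_R^c}|u|^{2^{*}}$ and $\int_{B_R^c}|v|^{2^{*}}$ can actually be exploited.
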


\begin{proof}
It suffices to prove the first identity.

Set $f(t):=|t|^{\alpha-2}t$ and, for $R>0$,  let $B_{R}$ be the ball centered at $0$ of radius $R$ and $B_{R}^{c}$ be its complement in $\mathbb{R}^{N}$. Let $\varphi\in D^{1,2}(\mathbb{R}^{N})$. Then
\begin{align}
&\left|\int_{B_{R}^{c}} f(u_k)|v_k|^{\beta}\varphi - \int_{B_{R}^{c}} f(u_k-u)|v_k-v|^{\beta}\varphi - \int_{B_{R}^{c}} f(u)|v|^{\beta} \varphi\right| \nonumber \\
&\qquad\leq \int_{B_{R}^{c}}|u_{k}|^{\alpha-1}\left||v_{k}|^{\beta}-|v_{k}-v|^{\beta}\right||\varphi| +
\int_{B_{R}^{c}}|f(u_{k})-f(u_{k}-u)||v_{k}-v|^{\beta}|\varphi| \nonumber \\
&\qquad\qquad + \int_{B_{R}^{c}}|u|^{\alpha-1}|v|^{\beta}|\varphi|. \label{eq:ball complement}
\end{align}
Fix $\varepsilon>0$. From \eqref{eq:meanvalue1} we derive
\begin{align*}
&\int_{B_{R}^{c}}|u_{k}|^{\alpha-1}\left||v_{k}|^{\beta}-|v_{k}-v|^{\beta}\right||\varphi| \leq C\int_{B_{R}^{c}}|u_{k}|^{\alpha-1}\left(|v_k-v|^{\beta-1}|v|+|v|^\beta\right)|\varphi| \\
&\qquad\leq C|u_{k}|^{\alpha-1}_{2^*}\left(|v_k-v|^{\beta-1}_{2^*}+|v|^{\beta-1}_{2^*}\right)\left(\int_{B_{R}^{c}}|v|^{2^*}\right)^{1/2^*}|\varphi|_{2^*} \leq \frac{\varepsilon}{3} \|\varphi\|,
\end{align*}
if $R$ is large enough, where $|\cdot|_s$ stands for the norm in $L^s(\mathbb{R}^N)$. Similarly, from \eqref{eq:meanvalue2} we obtain that
$$\int_{B_{R}^{c}}|f(u_{k})-f(u_{k}-u)||v_{k}-v|^{\beta}|\varphi| \leq \frac{\varepsilon}{3} \|\varphi\|$$
for $R$ large enough. Clearly, the same is true for the last integral in \eqref{eq:ball complement}. Now, we fix $R>0$ such that
$$\left|\int_{B_{R}^{c}} f(u_k)|v_k|^{\beta}\varphi - \int_{B_{R}^{c}} f(u_k-u)|v_k-v|^{\beta}\varphi - \int_{B_{R}^{c}} f(u)|v|^{\beta} \varphi\right| \leq \varepsilon \|\varphi\|.$$
In $B_R$ we have that
\begin{align*}
&\left|\int_{B_{R}} f(u_k)|v_k|^{\beta}\varphi - \int_{B_{R}} f(u_k-u)|v_k-v|^{\beta}\varphi - \int_{B_{R}} f(u)|v|^{\beta} \varphi\right| \\
&\qquad\leq \int_{B_{R}} |f(u_k)|\left||v_k|^{\beta}-|v_k-v|^{\beta}-|v|^{\beta}\right||\varphi| \\
&\qquad\qquad + \int_{B_{R}} |f(u_k)-f(u_k-u)-f(u)||v_k-v|^{\beta}|\varphi| \\
&\qquad\qquad + \int_{B_{R}} |f(u_k)-f(u_k-u)-f(u)||v|^{\beta}|\varphi| \\
&\qquad\qquad + \int_{B_{R}} |f(u_k-u)||v|^{\beta}|\varphi| \; + \int_{B_{R}} |f(u)||v_{k}-v|^{\beta}|\varphi|.
\end{align*}
Now, we estimate the integrals on the RHS using Lemma \ref{lem:auxiliary}. For the first one, we fix $q\in [1,2^*)$ such that $\bar{q}:=\frac{q}{\beta -1}>1$ and $1-\frac{\alpha-1}{2^*}-\frac{\beta-1}{q}\geq\frac{1}{2^*}$. Then,
\begin{align*}
&\int_{B_{R}} |f(u_k)|\left||v_k|^{\beta}-|v_k-v|^{\beta}-|v|^{\beta}\right||\varphi| \\
&\qquad\leq C|u_k|^{\alpha-1}_{2^*}\left(\int_{B_R}\left||v_k|^{\beta}-|v_k-v|^{\beta}-|v|^{\beta}\right|^{\bar{q}}\right)^{1/\bar{q}}|\varphi|_{2^*} = \frac{\varepsilon}{5}\|\varphi\|,
\end{align*}
for large enough $k$. The other integrals are estimated in a similar way. This shows that
$$\left|\int_{\mathbb{R}^N} f(u_k)|v_k|^{\beta}\varphi - \int_{\mathbb{R}^N} f(u_k-u)|v_k-v|^{\beta}\varphi - \int_{\mathbb{R}^N} f(u)|v|^{\beta} \varphi\right| \leq 2\varepsilon \|\varphi\|,$$
for large enough $k$, and finishes the proof of the lemma.
\end{proof}

Taking $u_k=v_k$ in Lemma \ref{lem:derivative}, we obtain the well known identity
$$\int_{\mathbb{R}^N} |u_k|^{2^*-2}u_k - \int_{\mathbb{R}^N} |u_k-u|^{2^*-2}(u_k-u) - \int_{\mathbb{R}^N} |u|^{2^*-2}u = o(1)$$
in $(D^{1,2}(\mathbb{R}^N))'$; see \cite[Lemma 8.9]{w}.

 \vspace{10pt}

\begin{flushleft}
\textbf{Mónica Clapp}\\
Instituto de Matemáticas\\
Universidad Nacional Autónoma de México\\
Circuito Exterior, Ciudad Universitaria\\
04510 Coyoacán, CDMX\\
Mexico\\
\texttt{monica.clapp@im.unam.mx} \vspace{10pt}

\textbf{Jorge Faya}\\
Instituto de Matemáticas\\
Universidad Nacional Autónoma de México\\
Circuito Exterior, Ciudad Universitaria\\
04510 Coyoacán, CDMX\\
Mexico\\
\texttt{jorgefaya@gmail.com}
\end{flushleft}

\end{document}